\newtheorem{algorithm}{Algorithm}
\newtheorem{example}{Example}
\newtheorem{theorem}{Theorem}
\newtheorem{lemma}{Lemma}
\newtheorem{definition}{Definition}
\newtheorem{proposition}{Proposition}
\newtheorem{proof}{Proof.}
\newcommand*{\QEDA}{\hfill\ensuremath{\blacksquare}}%
\newcommand{\xbold}{{\bf x}}
\newcommand{\zbold}{{\bf z}}
\newcommand{\Qbold}{{ Q}}
\newcommand{\ybold}{{\bf y}}
\newcommand{\ebold}{{\bf e}}
\newcommand{\fbold}{{ f}}
\newcommand{\dbold}{{\bf d}}
\def\d{\mathbf d}
\def\x{\mathbf x}
\def\y{\mathbf y}
\def\e{\mathbf e}
\newcommand{\R}{{\mathbb R}}
\renewcommand{\Re}{{\mathbb R}}
\def\trt{^{\scriptscriptstyle T}}
\title{A Class of  Parallel Decomposition Algorithms for SVMs Training\thanks{The work of Laura Palagi was partially supported by the italian project PLATINO (Grant Agreement n. PON01\_01007); the work of Simone Sagratella was partially supported by the grant: Avvio alla Ricerca 397, Sapienza University of Rome}}
\date{}
\author{Andrea Manno\footnote{Dipartimento di Ingegneria dell'Informazione, Università degli Studi Firenze,Via Santa Marta, 3 50139, Florence, Italy. (andrea.manno@unifi.it)}, Laura Palagi\footnote{Department of Computer, Control and Management Engineering, Sapienza University of
Rome, Via Ariosto 25, 00185, Rome Italy. (palagi@dis.uniroma1.it)} and Simone Sagratella\footnote{Department of Computer, Control and Management Engineering, Sapienza University of
Rome, Via Ariosto 25, 00185, Rome Italy. (sagratella@dis.uniroma1.it)}}
\begin{document}
\maketitle

\begin{abstract}
The training of Support Vector Machines may be
a very difficult task when dealing with very large
datasets. The memory requirement and the time consumption of the SVMs algorithms
grow rapidly with the increase of the data. To overcome
these drawbacks, we propose a parallel decomposition algorithmic
scheme for SVMs training for which we prove global convergence
under suitable conditions. We outline how these assumptions
can be satisfied in practice and we suggest various
specific implementations exploiting the adaptable
structure of the algorithmic model.

\end{abstract}
\noindent \textbf{Keywords.} Decomposition Algorithm, Big Data, Support Vector Machine, Machine Learning, Parallel Computing

\section{Introduction}
A Support Vector Machine (SVM) is a well known
classification and regression tool that has
spread in many scientific fields during the last two decades, see \cite{SVM}.
Given a training set of $n$ input-target pairs
$$ D = \{ (\zbold_r,y_r), \; r=1,\dots,n, \; \zbold_r \in \Re ^m, \; y_r \in \{-1,1\}  \}, $$
an SVM provides a prediction model used to classify new unlabeled samples.\\
The dual formulation of an SVM training problem is
\begin{align}\label{eq:problem}
\displaystyle\min_{\xbold} \; & f (\xbold) := \frac{1}{2} \xbold\trt  Q \xbold - \ebold\trt \xbold \nonumber\\
        & \ybold\trt \xbold = 0 \\
        & \mathbf{0} \leq \xbold \leq C \ebold,\nonumber
\end{align}
where $\xbold \in \Re^n$, $\ebold \in \Re^n$ is a vector of all ones,
$C>0$ is a positive constant, $\ybold \in \{-1,1\}^n$ and $Q$ is an $n \times n$ symmetric positive semidefinte matrix.
Each component of $\xbold$ is associated with a sample
of the training set and $\ybold$ is the vector of the corresponding labels.
Entries of $Q$ are defined by
$$ Q_{rq}=y_r y_q K(\zbold_r,\zbold_q), \quad\quad r,q=1,2,\dots,n, $$
where $K:\Re^m\times\Re^m\to\Re$ is a given kernel function \cite{Cristianini}.\\
Many real SVM applications are characterized
by a large dimensional training set. This implies that hessian matrix $Q$ is so big that it cannot
be entirely stored in memory. For this reason
classical optimization algorithms that use first and second
order information cannot be used to efficiently solve problem \eqref{eq:problem}.\\
To overcome this difficulty, many decomposition algorithms have been
proposed in literature. At each iteration, they split the original problem
into a sequence of smaller subproblems where only a subset of variables (working set) are updated. Columns of the
hessian submatrix corresponding to each subproblem are, partially or entirely, recomputed
at each step.
These strategies can be mainly divided into
SMO (Sequential Minimal Optimization) and non-SMO methods. SMO algorithms (see e.g. \cite{LIBSVM,PlattSMO}) work
with subproblems of dimension two, so that their solutions
can be computed analytically; while non-SMO algorithms (see e.g. \cite{LIBLINEAR,SVMlight})
need an iterative optimization method to solve each subproblem.
From the theoretical point of view, the policy for updating the working set plays a crucial role to prove convergence.
 In case of SMO methods, a proper selection rule
based on the maximal violating principle is sufficient to ensure asymptotic
convergence of the decomposition scheme \cite{lin:ieee06,lin:ieee02}. For larger working sets,
convergence proofs are available under further conditions  \cite{SVMlightconvergence,hybrid,Palagi2005311}.

In recent years SVMs have been applied to huge datasets, mainly related
to web-oriented applications. To reduce the big
amount of time needed for the training of SVMs on such huge datasets, parallel algorithms have been proposed.
Some of these  parallel approaches to SVMs
consists in distributing
the most expensive tasks, such as subproblems solving and gradient updating,
 among the available processors, see \cite{Ferris2003783,Zanni,Zanni2}.
Another way of fruitfully exploit parallelism is based on splitting the training data
into subsets and distributing them among the processors \cite{parSMO1,ParSMO}.
Among these parallel techniques, there are also the so called Cascade-SVM (see \cite{Cascade,impcascade}) that has been introduced to face
big dimensional instances. While achieving a good reduction
of the training time respect to sequential methods, these methods may
lack convergence properties or may require strong assumptions to prove it.

Actually, combining decomposition rules, for the selection of working sets, and parallelism makes the proof of
convergence a very difficult task, see \cite{Liuzzipalagipiacentini}. This is mainly due to nonseparability of the feasible set of problem \eqref{eq:problem}.

In this work we propose a class of convergent parallel training algorithms
based on the decomposition of problem \eqref{eq:problem} into
a partition of subproblems that can be solved independently by parallel
processes. The convergence to a global optimum of problem \eqref{eq:problem} is proved
under realistic assumptions.
It partially exploits
results introduced in \cite{flexa,tseng}.
The algorithmic framework presented may include, as a special case, other convergent
theoretical models like \cite{Liuzzipalagipiacentini}.

The paper is organized as follows: in Section \ref{sec: preliminaries} we introduce some preliminary results;
in Section \ref{sec: alg} we introduce a general parallel algorithmic scheme. We analyze its convergence
properties in Sections \ref{sec:convergence}, \ref{sec:partition} and \ref{sec:realistic}; in Section \ref{sec:implementation}
we discuss about some possible practical implementations.

\paragraph{Notation}
In the following we use this notation. Vectors are boldface. Given a vector $\x \in \R^n$ with components $x_r$ and a subset of indices $P\subseteq \{1,\dots,n\}$ we denote by
$\x_{P} \in \R^{|P|}$ the subvector made up of components $x_r$ with $r\in P$ and by
$\x_{-P} \in \R^{n-|P|}$ the subvector made up of components $x_r$ with $r\not\in P$.
By $\|\cdot\|$ we indicate the euclidean norm, whereas the zero norm of a vector $\|\x\|_0$ denotes the number of nonzero components of vector $\x$.
Further given a square $n\times n$ matrix  $\Qbold$, we denote by
$\Qbold_{* r}$ the $r-$th column of the matrix. Given two subsets of indices
$P_r, P_q\subseteq \{1,\dots,n\}$,  we write $Q_{P_r P_q}$ to indicate the $|P_r|\times | P_q|$ submatrix of $Q$ with row indices in block $P_r$ and column indices in block $P_q$. We denote by $\lambda_{\min}^Q$ and $\lambda_{\max}^Q$ respectively the minimum and maximum eigenvalue of a square matrix $Q$.
For the sake of simplicity we denote the $r-$th component of the gradient as $\nabla f(\x)_r=\frac{\partial f (\x)}{\partial x_r}$ and as  $\nabla_P f(\x)\in \R^{|P|}$ the subvector of the gradient made up of components $\frac{\partial f (\x)}{\partial x_r}$ with $r\in P$.
We denote by $\cal F$ the feasible set of problem \eqref{eq:problem}, namely
$${\cal F}=\{\x\in \R^n: \ybold\trt \xbold = 0, \ \  \mathbf{0} \leq \xbold \leq C \ebold\}.$$
Note that all the results that we report in the sequel hold also in the case of feasible set
${\cal F}=\{\x\in \R^n: \ybold\trt \xbold = b, \ \  \mathbf{0} \leq \xbold \leq C \ebold\},$
where $\y\in \R^n$ and $b\in \R$, but for sake of simplicity we refer to the case $b=0$ and $\y\in\{-1,1\}^n$.

\section{Optimality Conditions and Preliminary Results}\label{sec: preliminaries}
Let us consider a solution $\xbold^*$ of problem \eqref{eq:problem}. Since constraints are linear and the objective function is convex,  necessary and sufficient conditions for optimality are the Karush-Kuhn-Tucker (KKT) conditions that state that
there exists a scalar $s$ such that for all indices $r \in \{1,\ldots,n\}$:
\begin{align}\label{pre-KKT}
\nabla f(\xbold^*)_r+ s y_r  \geq 0 \quad \mbox{if} &\ x^*_r=0 \nonumber\\
\nabla f(\xbold^*)_r+s y_r  \leq 0 \quad \mbox{if} &\ x^*_r=C \\
\nabla f(\xbold^*)_r+s y_r  =0 \quad \mbox{if} &\ 0< \ x^*_r< C. \nonumber
\end{align}
It is well known (see e.g.\cite{SVMlight}) that KKT conditions can be written in a more compact form by introducing the following sets
\begin{equation*}
 I_{up}(\xbold) := \{r \subseteq \{1,\dots,n\} : \ x_r<C, \ y_r=1,\mbox{ or } x_r>0, \ y_r=-1 \},
\end{equation*}
\begin{equation*}
 I_{low}(\xbold) := \{r \subseteq \{1,\dots,n\} : \ x_r<C, \ y_r=-1,\mbox{ or } x_r>0, \ y_r=1 \}.
\end{equation*}
Assuming that $I_{up}(\xbold^*) \neq \emptyset$ and $I_{low}(\xbold^*) \neq \emptyset$, then we can rewrite \eqref{pre-KKT} as
\begin{equation}\label{optimality}
m(\xbold^*)=\max_{r \in I_{up}(\xbold^*)} -\frac{\nabla f(\xbold^*)_r}{y_r} \leq  \min_{r \in I_{low}(\xbold^*)} -\frac{\nabla f(\xbold^*)_r}{y_r}=M(\xbold^*).
\end{equation}
By the convexity of problem \eqref{eq:problem}, we can say that $\xbold^*$ is optimal if and only if
either $I_{up}(\xbold^*) = \emptyset$ or $I_{low}(\xbold^*) = \emptyset$ or condition \eqref{optimality} holds.

Such a form of the KKT conditions is the basis of most efficient sequential decomposition algorithms for the solution of problem \eqref{eq:problem}. In decomposition algorithms the sequence $\{\xbold^k\}$  is obtained by changing at each iteration only a subset of the variables, let's say $\xbold_{P_i}$ with $P_i\subset\{1,\ldots,n\}$, whilst the other $\xbold_{-P_i}$ remain unchanged. Thus the sequence takes the form
$$\x^{k+1}=\x^k+\alpha^k \d^k,$$ where $\d^k$ is a sparse feasible descent direction such that $\|\d^k\|_0=|P_i|$ with $|P_i|<<n$ and $\alpha^k$ represents a stepsize along this direction. Whatever the feasible direction $\d^k$ is, since the objective function is quadratic and convex, the choice
of the stepsize can be performed by using an exact minimization of the objective function along  $\d^k$.
Indeed, let $\bar\beta>0$ be the largest feasible step at $\x^k\in {\cal F}$ along the descent direction $\d^k$ then
\begin{equation}\label{eq:alfa_ott}
  \alpha^k :=
  \min \left\{
  - \frac{\nabla \fbold(\x^k)\trt \d^k}{{\d^k}\trt \Qbold  \d^k}, \, \bar\beta
  \right\}.
\end{equation}
Sequential decomposition methods differ in the choice of the direction $\d^k$, or equivalently in the choice of the so called working set $P_i$.

Sequential Minimal Optimization (SMO) methods uses feasible descent directions $\d^k$ with $\|\d^k\|_0=2$ which is the minimal possible cardinality due to the equality constraint.
In a feasible point ${\xbold}\in {\cal F}$ a feasible direction with two nonzero components $\d^{(ij)}$ is given by
\begin{align}\label{smo_generic}
 d^{(ij)}_r := \begin{cases} \displaystyle\ \ \frac{1}{y_r} & \mbox{ if } r=i  \\[.8em]
\displaystyle -\frac{1}{y_r} & \mbox{ if } r=j   \\[.8em] \quad 0 & \mbox{ otherwise}\end{cases}, \quad r = 1, \ldots, n.
\end{align}
for any pair $(i,j)\in I_{up}(\x) \times  I_{low}(\x)$. We say that a pair $(i,j)\in I_{up}(\x) \times  I_{low}(\x)$ is a violating pair at $\x$ if it satisfied also $\nabla f(\x)^T\d^{(ij)}<0$.

The exact optimal stepsize $\alpha \ge 0$ along a direction $\d^{(ij)} $ can be efficiently computed by noting
that in \eqref{eq:alfa_ott} we have \begin{equation}\label{eq:t}
\bar \beta= \min \left\{\beta_{i}, \, \beta_{j}\right\},\end{equation}
where
\begin{equation}\label{eq: alfa_feas}
\beta_{h} := \left\{ \begin{array}{ll}
 x_{h} & \text{ if } d^{(ij)}_{h} < 0 \\
 C - x_{h} & \text{ if } d^{(ij)}_{h} > 0.\end{array}\right.\end{equation}
Thus we get the value of the optimal stepsize $\alpha$ along a direction $\d^{(ij)}$ as
\begin{equation}\label{eq:alfa_smo}
  \alpha :=
  \min \left\{
  - \frac{\nabla \fbold_{i}y_{i}-\nabla \fbold_{j}y_{j}}{Q_{ii}+Q_{jj}-2y_{i}y_{j}Q_{ij}}
, \, \bar \beta
  \right\}.
\end{equation}
Among such minimal descent directions, i.e. violating pairs,  a crucial role is played by the so called {\it Most Violating Pair} (MVP) direction (see e.g. \cite{SVMlight}). To be more specific,
 given a feasible point
${\xbold}$, let us define the sets
$$
I_{up}^{MVP}({\xbold}):=\left\{i\in I_{up}(\x):~i\in\arg\!\!\!\max_{h\in I_{up}({\xbold})} -\frac{\nabla
f({\xbold})_h}{y_h}\right\},
$$
$$
I_{low}^{MVP}({\xbold}):=\left\{j\in I_{low}(\x):~j\in\arg\!\!\!\min_{h\in
{I_{low}}({\xbold})} -\frac{\nabla
f({\xbold})_h}{y_h}\right\}.
$$
If $\x$ is not a solution of problem \eqref{eq:problem}, then $(i_\texttt{MVP},j_\texttt{MVP})\in I_{up}^{MVP}(\xbold)\times
I_{low}^{MVP}(\xbold)$ is a pair, possibly not unique, that violates the  KKT conditions at most and it is said {\it Most Violating
Pair} (MVP). In the sequel, for the sake of notational simplicity, we assume that, for every feasible $\x$, the MVP is unique as this makes no difference in our analysis.\\
The direction $\d_{\texttt{MVP}} \in \Re^n$ corresponding to the pair $(i_\texttt{MVP},j_\texttt{MVP})\in I_{up}^{MVP}(\xbold)\times
I_{low}^{MVP}(\xbold)$ is, among all feasible descent directions with only two nonzero components, the steepest descent one at $\x$. \\
Now we are ready to introduce the  definition of ``most violating step''. Let $\x_{\texttt{MVP}} = \x + \alpha_{\texttt{MVP}} \d_{\texttt{MVP}}$ with $\alpha_{\texttt{MVP}}$ obtained by \eqref{eq:alfa_smo} with $i=i_\texttt{MVP}$, $j=j_\texttt{MVP}$.
\begin{definition}[Most Violating Step]
At a point $\x \in {\cal F}$, we define the ``Most Violating Step'' (MVS) $S_{\texttt{MVP}}$ as:
\begin{equation}\label{eq: MVP stepsize}
 S_{\texttt{MVP}}(\x) := \| \x_{\texttt{MVP}}-\x\|= |\alpha_{\texttt{MVP}}| \|\d_{\texttt{MVP}}\|.
\end{equation}
\end{definition}
In particular, since $y^i\in\{-1,1\}$ we have that $S_{\texttt{MVP}}(\x) = |\alpha_{\texttt{MVP}}| \sqrt{2}$.

We can state the optimality condition using the definition of MVS.
\begin{proposition}\label{pr: M=0}
 A point $\xbold^* \in \cal F$  is optimal for problem \eqref{eq:problem} if and only if either $I_{up}(\xbold^*) = \emptyset$ or $I_{low}(\xbold^*) = \emptyset$ or $S_{\texttt{MVP}}(\xbold^*)=0$.
\end{proposition}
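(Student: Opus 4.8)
The plan is to reduce the statement to the convexity-based optimality characterization recorded just after \eqref{optimality}, namely that $\xbold^*\in{\cal F}$ is optimal if and only if $I_{up}(\xbold^*)=\emptyset$, or $I_{low}(\xbold^*)=\emptyset$, or $m(\xbold^*)\le M(\xbold^*)$. Since the first two alternatives already appear verbatim in the statement to be proved, it suffices to show that, whenever $I_{up}(\xbold^*)\neq\emptyset$ and $I_{low}(\xbold^*)\neq\emptyset$, the condition $m(\xbold^*)\le M(\xbold^*)$ is equivalent to $S_{\texttt{MVP}}(\xbold^*)=0$. Because $\|\d_{\texttt{MVP}}\|=\sqrt2$, equality \eqref{eq: MVP stepsize} reads $S_{\texttt{MVP}}(\xbold^*)=\sqrt2\,|\alpha_{\texttt{MVP}}|$, so $S_{\texttt{MVP}}(\xbold^*)=0$ is equivalent to $\alpha_{\texttt{MVP}}=0$, and the whole problem becomes one of relating the sign of $\alpha_{\texttt{MVP}}$ to condition \eqref{optimality}.

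First I would rewrite the ingredients of \eqref{eq:alfa_smo} in terms of $m$ and $M$. Using $y_r\in\{-1,1\}$, hence $1/y_r=y_r$, together with $i_\texttt{MVP}\in I_{up}^{MVP}(\xbold^*)$ and $j_\texttt{MVP}\in I_{low}^{MVP}(\xbold^*)$, the numerator of the fraction in \eqref{eq:alfa_smo} equals $-\frac{\nabla f(\xbold^*)_{i_\texttt{MVP}}}{y_{i_\texttt{MVP}}}+\frac{\nabla f(\xbold^*)_{j_\texttt{MVP}}}{y_{j_\texttt{MVP}}}=m(\xbold^*)-M(\xbold^*)$, which is precisely $-\nabla f(\xbold^*)\trt\d_\texttt{MVP}$, while the denominator equals $\d_\texttt{MVP}\trt Q\,\d_\texttt{MVP}\ge 0$ by positive semidefiniteness of $Q$. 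I would then use that, by construction via \eqref{eq:alfa_ott} and recalling $\alpha_\texttt{MVP}\ge 0$, the scalar $\alpha_\texttt{MVP}$ is the minimizer over the feasible segment $\{\alpha:0\le\alpha\le\bar\beta\}$ of the convex univariate function $\varphi(\alpha):=f(\xbold^*+\alpha\d_\texttt{MVP})$, whose derivative at the origin is $\varphi'(0)=\nabla f(\xbold^*)\trt\d_\texttt{MVP}=M(\xbold^*)-m(\xbold^*)$.

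The two implications are then short. If $m(\xbold^*)\le M(\xbold^*)$, then $\varphi'(0)\ge 0$, so the convex function $\varphi$ is nondecreasing on $[0,\bar\beta]$ and its minimizer is $\alpha_\texttt{MVP}=0$, whence $S_{\texttt{MVP}}(\xbold^*)=0$. Conversely, if $m(\xbold^*)>M(\xbold^*)$, then $\varphi'(0)<0$, i.e. $\d_\texttt{MVP}$ is a strict descent direction; moreover $\bar\beta>0$, which I would verify by going through the four cases in the definitions of $I_{up}$ and $I_{low}$ and observing that in each of them the relevant component of $\d_\texttt{MVP}$ points strictly into the box (either $d_{i_\texttt{MVP}}=y_{i_\texttt{MVP}}>0$ with $x^*_{i_\texttt{MVP}}<C$, or $d_{i_\texttt{MVP}}<0$ with $x^*_{i_\texttt{MVP}}>0$, and analogously at $j_\texttt{MVP}$), so that $\beta_{i_\texttt{MVP}}>0$ and $\beta_{j_\texttt{MVP}}>0$ in \eqref{eq: alfa_feas} and thus $\bar\beta=\min\{\beta_{i_\texttt{MVP}},\beta_{j_\texttt{MVP}}\}>0$ by \eqref{eq:t}. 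A convex $\varphi$ with $\varphi'(0)<0$ and $\bar\beta>0$ attains its minimum over $[0,\bar\beta]$ at a strictly positive point, so $\alpha_\texttt{MVP}>0$ and $S_{\texttt{MVP}}(\xbold^*)>0$. Combining the two directions with the convexity characterization of optimality gives the proposition.

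The only delicate point, which I would address explicitly, is the degenerate case $\d_\texttt{MVP}\trt Q\,\d_\texttt{MVP}=0$, in which the closed form in \eqref{eq:alfa_smo} is not literally defined: there $\varphi$ is affine with slope $M(\xbold^*)-m(\xbold^*)$, and the ``minimizer of a convex function over $[0,\bar\beta]$'' description used above applies unchanged, so phrasing the argument through $\varphi$ rather than through the explicit quotient keeps it clean. Everything else is routine sign bookkeeping, and the main (mild) obstacle is exactly this care with the zero-curvature case and with the nonnegativity convention for $\alpha_\texttt{MVP}$.
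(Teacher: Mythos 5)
Your proposal is correct and follows essentially the same route as the paper's proof: reduce to the characterization $m(\xbold^*)\le M(\xbold^*)$, identify $\nabla f(\xbold^*)\trt\d_{\texttt{MVP}}$ with $M(\xbold^*)-m(\xbold^*)$, use feasibility of $\d_{\texttt{MVP}}$ to get $\bar\beta>0$, and conclude that the optimal stepsize vanishes exactly when \eqref{optimality} holds. Your extra care with the zero-curvature case $\d_{\texttt{MVP}}\trt Q\,\d_{\texttt{MVP}}=0$ and with the nonnegativity convention for $\alpha_{\texttt{MVP}}$ (points the paper glosses over in \eqref{eq:alfa_smo}) is a welcome refinement but not a different argument.
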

\begin{proof}
 As said above $\xbold^*$ is optimal for problem \eqref{eq:problem} if and only if either $I_{up}(\xbold^*) = \emptyset$ or $I_{low}(\xbold^*) = \emptyset$ or condition \eqref{optimality} holds.
 Therefore we only have to show that, in the case in which $I_{up}(\xbold^*) \neq \emptyset$ and $I_{low}(\xbold^*) \neq \emptyset$, the following holds:
 $$
 S_{\texttt{MVP}}(\xbold^*)=0 \quad \Leftrightarrow \quad m(\xbold^*)\leq M(\xbold^*).%\text{condition \eqref{optimality} holds}.
 $$
 Since $I_{up}(\xbold^*) \neq \emptyset$ and $I_{low}(\xbold^*) \neq \emptyset$, we can compute a pair $(i_{\texttt{MVP}}^*, j_{\texttt{MVP}}^*)\in I_{up}^{MVP}(\xbold^*)\times
I_{low}^{MVP}(\xbold^*)$ and $\d_{\texttt{MVP}}^*=
 \d^{(i_{\texttt{MVP}}^*, j_{\texttt{MVP}}^*)}$ as in \eqref{smo_generic}.
 $m(\xbold^*)\leq M(\xbold^*)$ is equivalent to inequality $\nabla \fbold(\xbold^*)\trt \d_{\texttt{MVP}}^*\ge 0$.
 By noting that $\d_{\texttt{MVP}}^*
 $ is a feasible direction at $\x^*$, then from \eqref{eq:t} we have $\bar\beta>0$.
 %$\beta_{i_{\texttt{MVP}}^*}^*>0$
% and $\beta_{j_{\texttt{MVP}}^*}^*>0$.
 Therefore by \eqref{eq:alfa_smo} we can conclude that $\nabla \fbold(\xbold^*)\trt \d_{\texttt{MVP}}^* \ge 0$
 if and only if
 $\alpha_{\texttt{MVP}}^*=0$
 and, in turn, if and only if $S_{\texttt{MVP}}(\xbold^*)=0$, so that the proof is complete. \QEDA%%%%
\end{proof}

\section{A Parallel Decomposition Model}\label{sec: alg}

In this section we introduce a parallel decomposition scheme for finding a solution of problem \eqref{eq:problem}. The theoretical properties and implementation details are discussed in the next sections.
The algorithm fits in a decomposition framework where, as usual, the solution of problem \eqref{eq:problem} is obtained by a sequence of solution of smaller problems in which only a subset of the variables is changed.
 To fix notation, let $\x^k \in \cal F$ and consider a subset $P_i\subset \{1,\dots, n\}$, so that $\x^k $ can be partitioned as
$\xbold^k := (\x^k_{P_i}, \xbold^k_{{-P_{i}}})$.
The problem of minimizing  over $\x_{P_i}$ with $\xbold_{{-P_{i}}}$ fixed to the current value $\xbold_{{-P_{i}}}^k$ is:
\begin{align}\label{eq: problem i-th proximal}
\displaystyle\min_{\x_{P_i} \in {\cal F}_{P_i}^k} \; & f_{P_i}(\x_{P_i}, \xbold_{{-P_{i}}}^k) + \frac{\tau^k_i}{2} \|\x_{P_i} - \x_{P_i}^k\|^2,
\end{align} where a proximal point term with $\tau^k_i\ge 0$ has been added \cite{Palagi2005311},
and the feasible set is
$$ {\cal F}_{P_i}^k:= \{ \x_{P_i} \in \Re^{|P_i|}: \ybold_{P_i}\trt \x_{P_i}=\ybold_{{P_i}}\trt  \x_{{P_{i}}}^k,\; 0 \le \x_{P_i} \le C \ebold_{P_i} \}.
$$
Problem \eqref{eq: problem i-th proximal} is still quadratic and convex with hessian matrix $\Qbold_{P_iP_i}+\tau^k_i I_{P_{i}} $ symmetric and positive semidefinite, and linear term given by $\ell^k_{P_i}+\tau^k_ix_{P_i}^k$, where
$$\ell^k_{P_i}= \sum_{P_j \in {\cal P}, j \neq i} \Qbold_{{P_i}{P_j}}  \x_{P_j}^k - \ebold_{P_i}.$$
We denote $\widehat \x^k_{P_i}$ as a solution of problem \eqref{eq: problem i-th proximal}, which is unique either if $\tau^k_i>0$ or if $\Qbold_{P_i P_i}$ is positive definite.

The parallel scheme that we are going to define is not based on splitting the data set or in parallelizing the linear algebra, but on defining a bunch of subproblems to be solved by means of  parallel and independent processes.  
Unlike sequential decomposition methods, the
search direction $\d^k$ is obtained by summing up smaller directions obtained by solving in parallel a bunch of subproblems of type \eqref{eq: problem i-th proximal}.

Let us define a partition ${\cal P}=\{P_1,P_2,\ldots\}$ of the set of all indices $\{1, \ldots, n\}$. By definition we have that $P_i\cap P_j=\emptyset $ and $\cup_{i} P_i = \{1, \ldots, n\}$.
The basic idea underlying the definition of the parallel decomposition algorithm is summarized in the scheme below.
\begin{center}
\framebox{\newlength{\miniwidthb}\setlength{\miniwidthb}{\textwidth}
    \addtolength{\miniwidthb}{-4pc}
    \begin{minipage}{\miniwidthb}
 \begin{algorithm}\label{al: svm decomposition}{\rm
     \begin{center}
        {\bf Parallel Decomposition Model}
      \end{center}
      \begin{description}
      \item[{\bf Initialization}]  Choose $\xbold^0\in {\cal F}$ and set $k=0$.
\item[{\bf Do while}] (a stopping criterion satisfied)

  \begin{description}
    \item[\bf{S.1} (Partition definition)] \item[\hskip .5truecm] Set ${\cal P}^k=\{P_1,P_2,\ldots,P_{N^k}\}$ and set $\tau^k_i \geq 0$ for all $i=1,\ldots, N^k$.
   \item[\bf{S.2} (Blocks selection)]
   \item[\hskip .5truecm] Choose a subset of blocks ${\cal J}^k \subseteq {\cal P}^k$.

  \item[\bf{S.3} (Parallel computation)]
  \item[\hskip .5truecm] For all $P_i \in {\cal J}^k$ {compute in parallel} the optimal solution $\widehat \x^k_{P_i}$ of problem \eqref{eq: problem i-th proximal}.
 \item[\bf{S.4} (Direction)] Set $\d^k \in \Re^n$ block-wise as
 \begin{equation}\label{searchd}
  \d_{P_i}^{k}=\begin{cases}\widehat \x^k_{P_i}-\x_{P_i}^k & \text{if } P_i \in {\cal J}^k,\\
 {\mathbf 0} & \text{otherwise}.\\
\end{cases}
 \end{equation}

  \item[\bf{S.5} (Stepsize)] Choose a suitable stepsize $\alpha^k > 0$.
  \item[\bf{S.6} (Update)] Set $\x^{k+1} = \x^k + \alpha^k \dbold^k$ and $k = k + 1$.
  \end{description}

\item[{\bf End While}]
\item[{\bf Return}] $\x^k$.
\end{description}}\end{algorithm}
\end{minipage}
  }
\end{center}

\medskip\noindent
The scheme above encompasses different possible algorithms depending on the choice of the partition ${\cal P}^k$ at {\bf S.1}, the blocks selection ${\cal J}^k$ at {\bf S.2} and the stepsize rule at {\bf S.5}.
 
 %Before entering into these details we scroll down the scheme in order to identify the main computational burden of each step.\\
 A widely used standard  feasible point is %$\xbold^0$ is easily obtained without computational effort by setting 
 $\xbold^0={\mathbf 0}$, but of course different choices are possible if available. The choice of $\xbold^0={\mathbf 0}$ presents the advantage that also the gradient is available being  $\nabla f(\x^0)=-\e$.

 Checking optimality of the current point $\x^k$ may require
 zero or first order information depending on the stopping criterion adopted.
A standard stopping criterion is based on checking condition $m(\x^k)\le M(\x^k)-\eta$, for a given tolerance $\eta>0$. In this case the updated gradient $\nabla f(\x^{k+1})$ is needed at each iteration.
It is well known that for large scale problem this is a big effort due to expensive kernel evaluations. Indeed we have the following iterative updating rule
$$\nabla f(\x^{k+1})=\nabla f(\x^{k})+\alpha^k\sum_{P_i\in {\cal J}^k}\sum_{h\in P_i} Q_{*h} d_{h}^{k}.$$
At {\bf S.1} a partition ${\cal P}^k$ of $\{1,\ldots,n\}$   is defined. We point out that both the number $N^k$ of blocks and their composition in ${\cal P}^k$ can vary from one iteration to another. For notational simplicity we omit dependency of blocks $P_1,P_2,\ldots,P_{N^k}$ on the iteration $k$.  
As usual in decomposition algorithms, a correct choice of the partition is crucial for proving global convergence of the method.

At {\bf S.2} a subset ${\cal J}^k$ of blocks in ${\cal P}^k$ is selected. These blocks are the only ones used at {\bf S.3} to compute
a search direction $\dbold^k$ according to \eqref{searchd}. The selection of blocks makes the algorithmic scheme more flexible since one can set the overall computational burden.

At {\bf S.3}  we obtain an optimal solutions $\widehat \x^k_{P_i}$ of problem \eqref{eq: problem i-th proximal}
for each $P_i \in {\cal J}^k$. Note that $\widehat \x^k_{P_i}$ satisfies the optimality condition
\begin{equation}\label{eq: dirder}
\left[\nabla_{{P_i}}f_{P_i}(\widehat \x^k_{P_i},\xbold^k_{-P_{i}})+\tau^k_i (\widehat \x^k_{P_i} - \x_{P_i}^k) \right]\trt \d_{P_i}\ge 0\end{equation}
for any feasible direction $\d_{P_i}$ at $\widehat \x^k_{P_i}$.
The computational burden of this step consists in
\begin{itemize}
 \item computing vector $\ell^k_{P_i}$ to construct the objective function of \eqref{eq: problem i-th proximal} for all blocks $P_i\in{\cal J}^k$,
 \item solving the $|{\cal J}^k|$ subproblems.
\end{itemize}
These $|{\cal J}^k|$ convex quadratic problems can be distributed to different processes in order to be solved in a parallel fashion.\\
At {\bf S.4} the algorithm computes search direction $\d^k$.\\
At {\bf S.5} the algorithm computes stepsize $\alpha^k$.
We show in the next section (Theorems \ref{th: convergence1}, \ref{th: convergence2} and \ref{th: convergence3}) that, in order to have convergence of the algorithm, $\alpha^k$ can be computed according to a simple diminishing stepsize rule or a linesearch procedure (including the exact minimization rule).

\section{Theoretical Analysis}\label{sec:convergence}

In this section we analyze the theoretical properties of Algorithm \ref{al: svm decomposition}.
To this aim, we first introduce the definition of descent block and of descent iteration that are crucial for the following analysis.
\begin{definition}[Descent block]\label{dblock}
Given $\epsilon > 0$. At a feasible point $\xbold^k$, 
we say that the block of variables $P_i \subseteq\{1, \ldots, n\}$ is
 a \emph{descent block} if it satisfies
%Given a feasible point $\xbold^k$, $\taubold^k \geq 0$ and $\epsilon > 0$,
%we say that block of variables $P_i \subseteq\{1, \ldots, n\}$ is
% a \emph{descent block} at $\xbold^k$ if it satisfies
\begin{equation}\label{suffstep}
\|\widehat \x_{P_i} - \x_{P_i}^k\| \geq \epsilon S_{\texttt{MVP}}(\xbold^k),
\end{equation}
where $\widehat \x_{P_i}$ is the optimal solution of the corresponding problem \eqref{eq: problem i-th proximal} and $S_{\texttt{MVP}}(\xbold^k)=\|\x^k_{\texttt{MVP}}-\xbold^k\|$.
\end{definition}
Whenever at least one descent block $P_i$ is selected in ${\cal J}^k$ at {\bf S.2} of  Algorithm \ref{al: svm decomposition}, we say that iteration $k$ is a \emph{descent iteration}.

%\begin{definition}\label{diteration}
%We say that iteration $k$ of Algorithm \ref{al: svm decomposition} is a \emph{descent iteration}
%if at least one block in $P_i^k\subseteq {\cal J}^k$ is a descent block at $\xbold^k$.
%\end{definition}

Under the assumption that  at least one descent block is selected for optimization at {\bf S.3}
of the parallel algorithmic model,
we will prove that by using a suitable $\alpha^k$ at {\bf S.5} the sequence $\{\xbold^k\}$ produced by the algorithm satisfies
 \begin{equation}\label{eq: diminishing lim convergence}
  \displaystyle \lim_{k \to \infty} S_{\texttt{MVP}}(\xbold^k) = 0.
 \end{equation}
 We prove later on that the assumption is easy to achieve.
 We first consider the case when the stepsize $\alpha^k$ is determined by a standard Armjio linesearch procedure along the direction $\dbold^k$.

\begin{theorem}\label{th: convergence1}
 Let $\{\xbold^k\}$ be the sequence generated by Algorithm \ref{al: svm decomposition} where $\alpha^k \le 1$ at {\bf S.5} satisfies the following Armjio condition
   \begin{equation}\label{eq: armijo}
    \fbold(\xbold^k + \alpha^k \dbold^k) \leq \fbold(\xbold^k) + \theta \alpha^k \nabla \fbold(\xbold^k)\trt \dbold^k,
   \end{equation}
     with $\theta \in (0,1)$.
     Assume that for all iterations $k$
 \begin{enumerate}[\rm (i)]
  \item  a descent iteration $\tilde k$ with $ k\le \tilde k\le k+L$ for a finite $ L \ge 0$ is generated;
  \item  either $\tau^k_i \geq \underline{\tau} > 0$ or $Q_{P_i P_i} \succ 0$, for all $P_i \in {\cal J}^k$.
 \end{enumerate}
 Then either Algorithm \ref{al: svm decomposition} terminates in a finite number of iterations to a solution of problem \eqref{eq:problem}  or $\{\xbold^k\}$ admits a limit point and it satisfies 
\eqref{eq: diminishing lim convergence}.
\end{theorem}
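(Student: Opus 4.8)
The plan is to show that the Armijo linesearch forces a sufficient decrease of $f$ at descent iterations, and that this decrease is bounded below by a quantity controlled by $S_{\texttt{MVP}}(\xbold^k)$; since $f$ is bounded below on the compact feasible set $\cal F$, the decreases must vanish, forcing $S_{\texttt{MVP}}(\xbold^{\tilde k})\to 0$ along the subsequence of descent iterations, and then assumption (i) (the $L$-bounded gap between consecutive descent iterations) together with continuity of $S_{\texttt{MVP}}$ upgrades this to the full sequence. First I would record that, because each $\widehat\x^k_{P_i}$ solves the strongly convex problem \eqref{eq: problem i-th proximal} (strong convexity guaranteed by assumption (ii), with modulus at least $\min\{\underline\tau,\lambda_{\min}\}>0$ over the relevant blocks), the optimality inequality \eqref{eq: dirder} evaluated at the feasible direction $\d_{P_i}=\x^k_{P_i}-\widehat\x^k_{P_i}$ yields a bound of the form
\[
\nabla_{P_i} f(\x^k)\trt \d^k_{P_i} \le -c\,\|\widehat\x^k_{P_i}-\x^k_{P_i}\|^2
\]
for a uniform constant $c>0$; summing over $P_i\in{\cal J}^k$ gives $\nabla f(\x^k)\trt\d^k \le -c\|\d^k\|^2 < 0$, so $\d^k$ is a genuine descent direction and the Armijo step $\alpha^k$ is well defined and positive.

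Next I would quantify how large the Armijo step must be. Since $f$ is quadratic, $f(\x^k+\alpha\d^k)=f(\x^k)+\alpha\nabla f(\x^k)\trt\d^k+\tfrac{\alpha^2}{2}(\d^k)\trt Q\d^k$, and $(\d^k)\trt Q\d^k\le \lambda_{\max}^Q\|\d^k\|^2$, so the Armijo condition \eqref{eq: armijo} holds for every $\alpha$ with $0<\alpha\le \min\{1,\,2(1-\theta)(-\nabla f(\x^k)\trt\d^k)/(\lambda_{\max}^Q\|\d^k\|^2)\}$; combined with the descent bound above this gives a uniform lower bound $\alpha^k\ge \underline\alpha>0$ (a standard backtracking argument gives the same conclusion if $\alpha^k$ is chosen by halving). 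Plugging back into \eqref{eq: armijo},
\[
f(\x^k)-f(\x^{k+1}) \ge \theta\,\underline\alpha\,c\,\|\d^k\|^2 .
\]
At a descent iteration $\tilde k$ we have $\|\d^{\tilde k}\|\ge \|\widehat\x^{\tilde k}_{P_i}-\x^{\tilde k}_{P_i}\|\ge \epsilon\,S_{\texttt{MVP}}(\x^{\tilde k})$ for the descent block $P_i\in{\cal J}^{\tilde k}$, hence $f(\x^{\tilde k})-f(\x^{\tilde k+1})\ge \theta\underline\alpha c\,\epsilon^2 S_{\texttt{MVP}}(\x^{\tilde k})^2$. Since $\{f(\x^k)\}$ is nonincreasing (every iteration satisfies \eqref{eq: armijo} with $\nabla f(\x^k)\trt\d^k\le 0$) and bounded below on $\cal F$, it converges, so the telescoped sum of decreases is finite; therefore $S_{\texttt{MVP}}(\x^{\tilde k})\to 0$ along the descent iterations. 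If some $\x^k$ already satisfies the optimality condition of Proposition \ref{pr: M=0} the algorithm stops finitely at a solution; otherwise the sequence stays in the compact set $\cal F$, hence admits a limit point.

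Finally I would close the gap between the descent-subsequence and the whole sequence. Fix an arbitrary $k$; by (i) there is a descent iteration $\tilde k$ with $k\le\tilde k\le k+L$, and from $\x^{k+1}=\x^k+\alpha^k\d^k$ with $\alpha^k\le 1$ and $\|\d^k\|$ controlled (itself bounded in terms of $S_{\texttt{MVP}}$ plus, at non-descent iterations, by the bound on the decrease which we have just shown tends to zero) one gets $\|\x^{\tilde k}-\x^k\|$ small when $k$ is large; since $S_{\texttt{MVP}}(\cdot)=\sqrt2\,|\alpha_{\texttt{MVP}}(\cdot)|$ is continuous on $\cal F$ (the arg-max/arg-min sets defining the MVP are upper semicontinuous and $\alpha_{\texttt{MVP}}$ in \eqref{eq:alfa_smo} depends continuously on the data once the pair is fixed), $S_{\texttt{MVP}}(\x^k)\le S_{\texttt{MVP}}(\x^{\tilde k})+o(1)\to 0$, which is \eqref{eq: diminishing lim convergence}. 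I expect the main obstacle to be precisely this last step: establishing uniform continuity/boundedness of $S_{\texttt{MVP}}$ across changes of the active MVP index pair, and bounding $\|\x^{\tilde k}-\x^k\|$ uniformly — this is where the quadratic structure, the summability of the decreases, and the uniform lower bound on $\alpha^k$ all have to be combined carefully.
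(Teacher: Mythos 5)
Most of your proposal tracks the paper's argument and is sound: the sufficient-descent bound $\nabla_{P_i}f(\x^k)\trt\d^k_{P_i}\le -c\|\widehat\x^k_{P_i}-\x^k_{P_i}\|^2$ obtained from the subproblem optimality condition together with assumption (ii) is exactly the paper's inequality \eqref{eq: suff-desc}, and combining it with \eqref{eq: armijo} and telescoping gives the same summability statement \eqref{eq: diminishing liminf convergence 1}. Your explicit uniform lower bound on the backtracking stepsize (using the quadratic structure and $\lambda_{\max}^Q$) is in fact a sharpening of what the paper only asserts loosely when it claims $\sum_{k\in\tilde K}\alpha^k=+\infty$; note, though, that it requires $\alpha^k$ to come from an actual backtracking procedure with a fixed initial trial, not merely from \emph{any} $\alpha^k\le1$ satisfying \eqref{eq: armijo}, and you should also record feasibility of the iterates (the paper does this first, using $\y_{P_i}\trt\widehat\x_{P_i}=\y_{P_i}\trt\x^k_{P_i}$, the box constraints and $\alpha^k\le1$), since the whole construction of $S_{\texttt{MVP}}$ and the limit-point claim presuppose $\x^k\in{\cal F}$.

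The genuine gap is your last step. You invoke continuity of $S_{\texttt{MVP}}$ on ${\cal F}$ to pass from $S_{\texttt{MVP}}(\x^{\tilde k})\to0$ along descent iterations to $S_{\texttt{MVP}}(\x^k)\to0$ for the whole sequence, but $S_{\texttt{MVP}}$ is \emph{not} continuous: the sets $I_{up}$ and $I_{low}$ are defined by strict inequalities and indices drop in or out discontinuously as components of $\x$ reach the bounds $0$ or $C$ — the paper itself stresses precisely this ``discontinuous nature of the index sets'' in Section \ref{sec:realistic}, and it is the reason the $\epsilon$-perturbed sets are introduced there. Concretely, at $\x^{\tilde k}$ the most violating pair may involve an index whose distance to its bound is tiny, so $\alpha_{\texttt{MVP}}$ is clipped and $S_{\texttt{MVP}}(\x^{\tilde k})$ is tiny, while at an arbitrarily close point $\x^k$ that index has left $I_{up}$ or $I_{low}$, the MVP switches to a different pair with ample feasible room, and $S_{\texttt{MVP}}(\x^k)$ is large; hence the inequality $S_{\texttt{MVP}}(\x^k)\le S_{\texttt{MVP}}(\x^{\tilde k})+o(1)$ you rely on can fail no matter how small $\|\x^k-\x^{\tilde k}\|$ is. You yourself flag this as the main obstacle, but it is not closed in your write-up. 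The paper avoids any continuity claim on $S_{\texttt{MVP}}$ at this point: it first gets $\liminf_{k\to\infty}S_{\texttt{MVP}}(\x^k)=0$ and then rules out $\limsup_{k\to\infty}S_{\texttt{MVP}}(\x^k)>0$ by a contradiction argument based solely on the summability \eqref{eq: diminishing liminf convergence 1} (choosing indices $n<i_n$ at which $S_{\texttt{MVP}}$ crosses from below $\gamma/2$ to above $\gamma$ and arguing that the cumulative displacement $\sum_{k=n}^{i_n-1}\alpha^k\|\d^k\|$ cannot vanish); to complete your proof you should replace the continuity step by an argument of that type, or otherwise justify the transfer without assuming regularity of $S_{\texttt{MVP}}$.
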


\begin{proof}
 First of all we note that $\{\xbold^k\}$ is a feasible sequence, in fact it is sufficient to show that for all $k$ if $\xbold^k$ is feasible then also $\xbold^{k+1}$ is feasible.
 Since for all $P_i \in {\cal J}^k$ it holds that ${\y_{P_i}}\trt \widehat \x_{P_i} = {\y_{P_i}}\trt \x_{P_i}^k$, then we can write
 \begin{align*}
  \displaystyle \ybold\trt \xbold^{k+1} &= \sum_{P_i \in {\cal J}^k} {\y_{P_i}}\trt \left(\x_{P_i}^{k} + \alpha^k (\widehat \x_{P_i} - \x_{P_i}^{k}) \right) + \sum_{P_i \notin {\cal J}^k} {\y_{P_i}}\trt \x_{P_i}^{k} \\
  &= \sum_{P_i \in {\cal J}^k} {\y_{P_i}}\trt \x_{P_i}^{k} + \sum_{P_i \notin {\cal J}^k} {\y_{P_i}}\trt \x_{P_i}^{k} = \ybold\trt \xbold^{k}=0.
 \end{align*}
 Finally by noting that for all $P_i \in {\cal J}^k$ it holds that $0 \leq \widehat \x_{P_i} \leq C$, then by the convexity of the box constraints
 and since $\alpha^k\le 1$, we obtain $0 \leq \xbold^{k+1} \leq C$ and this proves the feasibility of the sequence.

 Note that $\forall P_i \in {\cal J}^k$ the direction $-\d_{P_i}^k$ is a feasible direction at $\widehat x^k_{P_i}$ hence  by \eqref{eq: dirder}  we can write
  \begin{equation}\label{eq: proof-th1-dk}
   -\left[\nabla_{{P_i}}f_{P_i}(\widehat \x^k_{P_i},\xbold^k_{-P_{i}})+\tau^k_i (\widehat \x^k_{P_i} - \x_{P_i}^k) \right]\trt \d^k_{P_i}\ge 0.
  \end{equation}
Hence it holds that
 \begin{align*}
 -\nabla_{{P_i}}f_{P_i}(\x_{P_i}^k,\xbold^k_{-P_{i}})\trt \d_{P_i}^k=
   \left( Q_{P_iP_i} \x_{P_i}^k + \ell^k_{P_i} \right)\trt (\x_{P_i}^k-\widehat \xbold^k_{P_i}) =& \nonumber\\
   \left( Q_{P_iP_i} \x_{P_i}^k + \ell^k_{P_i} \right)\trt (\x_{P_i}^k-\widehat \xbold^k_{P_i}) -
   (\x_{P_i}^k-\widehat \xbold^k_{P_i})\trt Q_{P_iP_i} (\x_{P_i}^k-\widehat \xbold^k_{P_i}) +& \nonumber\\
   (\x_{P_i}^k-\widehat \xbold^k_{P_i})\trt Q_{P_iP_i} (\x_{P_i}^k-\widehat \xbold^k_{P_i}) =& \nonumber\\
  \left( Q_{P_iP_i} \widehat \xbold^k_{P_i} + \ell^k_{P_i} \right)\trt (\x_{P_i}^k-\widehat \xbold^k_{P_i}) +& \nonumber\\
  (\x_{P_i}^k-\widehat \xbold^k_{P_i})\trt Q_{P_iP_i} (\x_{P_i}^k-\widehat \xbold^k_{P_i}) =& \nonumber\\
  -\nabla_{{P_i}}f_{P_i}(\widehat \xbold^k_{P_i},\xbold^k_{-P_{i}})\trt \d_{P_i}^k
  +& \nonumber\\
     (\x_{P_i}^k-\widehat \xbold^k_{P_i})\trt Q_{P_iP_i} (\x_{P_i}^k-\widehat \xbold^k_{P_i}) \overset{\eqref{eq: proof-th1-dk}}{\geq}& \nonumber\\
   \tau^k_i \|\widehat \xbold^k_{P_i} - \x_{P_i}^k\|^2 +& \nonumber\\
   (\x_{P_i}^k-\widehat \xbold^k_{P_i})\trt Q_{P_iP_i} (\x_{P_i}^k-\widehat \xbold^k_{P_i}) \geq& \nonumber\\
   (\tau^k_i + \lambda^{Q_{P_i P_i}}_{\min}) \|\d_{P_i}^k\|^2&,
 \end{align*}
and then we have
\begin{equation}\label{eq: suff-desc}
 \nabla_{P_i}f_{P_i}(\x_{P_i}^k,\x_{-P_i}^k)\trt \d_ {P_i}^k\le - (\tau^k_i + \lambda^{Q_{P_i P_i}}_{\min}) \|\d_{P_i}^k\|^2.
\end{equation}
We denote by $\rho^k=\min_{P_i\in {\cal J}^k} (\tau^k_i + \lambda^{Q_{P_i P_i}}_{\min}) >0$. By assumption (ii) there exists $\rho>0$ such that $\rho^k \ge \rho$ for all $k$.

 From conditions \eqref{eq: armijo} and \eqref{eq: suff-desc} we can write
  \begin{equation}\label{eq:sufficient decrease}
\fbold(\xbold^{k+1})-\fbold(\xbold^{k}) \leq
  - \alpha^k \theta \rho \|\dbold^k\|^2,
 \end{equation}
% where $\beta>0$ is the minimum among $\underline{\tau}$ and all the minimum eigenvalues of all the positive definite principal submatrices of $\Qbold$.
therefore sequence $\{\fbold(\xbold^{k})\}$ is decreasing.
It is also bounded below so that it converges and
\begin{equation}\label{eq:fk conveges}
\lim_{k\to \infty} (\fbold(\xbold^{k+1})-\fbold(\xbold^{k}))=0.
\end{equation}
Let $\bar \x$ be a limit point of $\{ \x^k \}$, at least one of such points exists being ${\cal F}$ compact.
 Since, by the compactness of ${\cal F}$ and by the continuity of $f$, $-\infty < \fbold(\bar \xbold) - \fbold(\xbold^0)$ for all $\xbold^0 \in {\cal F}$, then, by \eqref{eq:sufficient decrease} and \eqref{eq:fk conveges}, we can write
 \begin{equation}\label{eq: diminishing liminf convergence 1}
  \displaystyle \sum_{k=0}^\infty \alpha^k \|\dbold^k\|^2 < + \infty.
 \end{equation}
 By condition (i) we can define an infinite subsequence $\{k\}_{\tilde K}$ made up of only descent iterations.
 Then, by \eqref{eq: diminishing liminf convergence 1},
  it follows that
 \begin{equation}\label{eq: diminishing liminf convergence 1 tilde}
  \displaystyle \sum_{k=0, k\in \tilde K}^\infty \alpha^k \|\dbold^k\|^2 <+\infty.
 \end{equation}
 A standard Armijo linesearch satisfying \eqref{eq: armijo} produces at each iteration an $\alpha^k>0$ in a finite number of steps (see \cite{bertbook:95}) and this ensures that
 \begin{equation}\label{diverge tilde}
  \sum_{k=0,k\in \tilde K}^{\infty} \alpha^k = +\infty.
 \end{equation}
 By \eqref{eq: diminishing liminf convergence 1 tilde} and \eqref{diverge tilde}, we obtain
 \begin{equation*}\label{eq: diminishing liminf convergence 2}
  \displaystyle \liminf_{k \to \infty, k \in \tilde K} \|\dbold^k\| = 0.
 \end{equation*}
 Now since each ${\cal J}^k$ with $k \in \tilde K$ contains a descent block at $\xbold^k$, by \eqref{suffstep} we can conclude that
 \begin{equation*}%\label{eq: diminishing liminf convergence 3}
  \displaystyle \liminf_{k \to \infty, k \in \tilde K} S_{\texttt{MVP}}(\xbold^k) = 0,
 \end{equation*}
 and then, since $S_{\texttt{MVP}}(\xbold^k)\ge 0$ for all $k$, we can write
 \begin{equation}\label{eq: diminishing liminf convergence 3}
  \displaystyle \liminf_{k \to \infty} S_{\texttt{MVP}}(\xbold^k) = 0.
 \end{equation}
 Suppose by contradiction that $\limsup_{k \to \infty} S_{\texttt{MVP}}(\xbold^k) > 0$, then for any $\gamma>0$ sufficiently
 small we would have $S_{\texttt{MVP}}(\xbold^k) > \gamma$ for infinitely many $k$ and $S_{\texttt{MVP}}(\xbold^k) < \frac{\gamma}{2}$ for infinitely many $k$.
 Therefore, one can always find an infinite set of indices, say ${\cal N}$, having the following property:
 for any $n \in {\cal N}$, there exists an integer $i_n > n$ such that $S_{\texttt{MVP}}(\xbold^n) < \frac{\gamma}{2}$ and $S_{\texttt{MVP}}(\xbold^{i_n}) > \gamma$.
 Then it is easy to see that $\xbold^n \neq \xbold^{i_n}$ and then $\sum_{k=n}^{i_n-1} \alpha^k \|\dbold^k\| > 0$ for all
 $n \in {\cal N}$. And then $$\liminf_{n \in {\cal N}, n \to \infty} \sum_{k=n}^{i_n-1} \alpha^k \|\dbold^k\|> 0,$$
 which is in contradiction with \eqref{eq: diminishing liminf convergence 1}. Then we finally obtain \eqref{eq: diminishing lim convergence}.
 \QEDA%%%%
\end{proof}

Note that since $\fbold$ is quadratic, condition \eqref{eq: armijo} can be guaranteed by using the exact minimization rule along direction $\dbold^k$:
\begin{equation}\label{eq: exact}
 \alpha^k := \max \left\{ \min \left\{ - \frac{\nabla \fbold(\xbold^k)\trt \dbold^k}{{\dbold^k}\trt \Qbold \dbold^k}, \, \bar \alpha^k \right\}, 0 \right\},
\end{equation}
where
$$
\bar \alpha^k := \min_{i\in\{1,\ldots,n\}: \dbold^k_i \neq 0} \left\{ \bar \alpha^k_i = \left\{ \begin{array}{ll}
 (\xbold^k)_i & \text{ if } \dbold^k_i < 0 \\
 C - (\xbold^k)_i & \text{ if } \dbold^k_i > 0
 \end{array}\right. \right\}.
$$
Note that in this case it is not necessary to impose $\alpha^k\le 1$ since the feasibility is guaranteed by construction.
%In Section \ref{sec:implementation} we will discuss in detail about the computation of $\alpha^k$ according to \eqref{eq: exact} and the resulting burden.

If $n$ is huge it may be cheaper to compute $\alpha^k$ in an alternative way in order to save costly function evaluations. In particular we propose a diminishing stepsize strategy for which we give two different convergence results based on slightly
different hypotheses.

\begin{theorem}\label{th: convergence2}
Let $\{\xbold^k\}$ be the sequence generated by Algorithm \ref{al: svm decomposition} where $\alpha^k\in(0,1]$ at {\bf S.5} satisfies the following condition
\begin{equation}\label{eq: diminishing_rule}
   \alpha^k \to 0  \mbox{ and }  \displaystyle\sum_{k=0}^{\infty} \alpha^k = +\infty.
 \end{equation}
Assume that for all iterations $k$
 \begin{enumerate}[\rm (i)]
  \item $k$ is a descent iteration;
  \item  either $\tau^k_i \geq \underline{\tau} > 0$ or $Q_{P_i P_i} \succ 0$, for all $P_i \in {\cal J}^k$;
 \end{enumerate}
Then either Algorithm \ref{al: svm decomposition} converges in a finite number of iterations to a solution of problem \eqref{eq:problem}  or $\{\xbold^k\}$ admits a limit point and \eqref{eq: diminishing lim convergence} holds.
%%%%%%%%%%%

\end{theorem}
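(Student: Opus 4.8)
The plan is to follow the proof of Theorem~\ref{th: convergence1} almost step by step, the only structural change being that the Armijo mechanism is replaced by the exact second-order expansion of the quadratic $f$, with $\alpha^k\to0$ used to absorb the curvature term. First, the feasibility of $\{\x^k\}$ is obtained exactly as there, using $\alpha^k\in(0,1]$, $\widehat\x^k_{P_i}\in{\cal F}^k_{P_i}$ and convexity of the box. Next, applying the subproblem optimality condition \eqref{eq: dirder} to the feasible direction $-\d^k_{P_i}$ at $\widehat\x^k_{P_i}$ gives, verbatim, the blockwise bound \eqref{eq: suff-desc}; summing over $P_i\in{\cal J}^k$ and recalling $\nabla f(\x^k)\trt\d^k=\sum_{P_i\in{\cal J}^k}\nabla_{P_i}f_{P_i}(\x^k_{P_i},\x^k_{-P_i})\trt\d^k_{P_i}$, we get $\nabla f(\x^k)\trt\d^k\le-\rho\,\|\d^k\|^2$, where $\rho>0$ is the uniform lower bound granted by assumption~(ii) exactly as in Theorem~\ref{th: convergence1} (uniformity costs nothing, since $\{1,\dots,n\}$ has finitely many subsets, hence finitely many matrices $Q_{PP}$).

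The genuinely new point is to show that $\{f(\x^k)\}$ is eventually nonincreasing. Since $f$ is quadratic, $f(\x^{k+1})=f(\x^k)+\alpha^k\nabla f(\x^k)\trt\d^k+\tfrac12(\alpha^k)^2\,(\d^k)\trt\Qbold\d^k$ holds with equality, so together with the previous inequality and $(\d^k)\trt\Qbold\d^k\le\lambda_{\max}^{\Qbold}\|\d^k\|^2$ we obtain $f(\x^{k+1})-f(\x^k)\le-\alpha^k\big(\rho-\tfrac12\alpha^k\lambda_{\max}^{\Qbold}\big)\|\d^k\|^2$. Because $\alpha^k\to0$, there is $k_0$ such that $\tfrac12\alpha^k\lambda_{\max}^{\Qbold}\le\rho/2$ for all $k\ge k_0$, whence $f(\x^{k+1})\le f(\x^k)-\tfrac\rho2\,\alpha^k\|\d^k\|^2\le f(\x^k)$ for $k\ge k_0$. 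Thus $\{f(\x^k)\}_{k\ge k_0}$ is nonincreasing and, being bounded below (${\cal F}$ compact, $f$ continuous), convergent; telescoping then gives $\sum_k\alpha^k\|\d^k\|^2<+\infty$, and since $\sum_k\alpha^k=+\infty$ by \eqref{eq: diminishing_rule}, this forces $\liminf_k\|\d^k\|=0$ (otherwise $\|\d^k\|^2$ would be bounded away from zero and the series would diverge).

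From here the remaining argument is that of Theorem~\ref{th: convergence1}. Since by assumption~(i) every iteration is a descent iteration, \eqref{suffstep} gives $0\le\epsilon\,S_{\texttt{MVP}}(\x^k)\le\|\d^k\|$ for all $k$, so $\liminf_k S_{\texttt{MVP}}(\x^k)=0$, while a limit point exists by compactness of ${\cal F}$; hence it only remains to upgrade the $\liminf$ to a $\lim$. If $\limsup_k S_{\texttt{MVP}}(\x^k)>0$, one extracts, for a small $\gamma>0$, an infinite set of pairs $n<i_n$ with pairwise disjoint blocks $\{n,\dots,i_n-1\}$ such that $S_{\texttt{MVP}}(\x^n)<\gamma/2$, $S_{\texttt{MVP}}(\x^{i_n})>\gamma$ and $S_{\texttt{MVP}}(\x^k)\ge\gamma/2$ for $n<k<i_n$; on such a block $\|\d^k\|\ge\epsilon\gamma/2$, while (using $\x^{k+1}-\x^k=\alpha^k\d^k$, $\|\x^{k+1}-\x^k\|\to0$, and the fact that $S_{\texttt{MVP}}$ jumps by at least $\gamma/2$ over it) the partial length $\sum_{k=n}^{i_n-1}\alpha^k\|\d^k\|$ stays bounded below by a positive constant; multiplying the two bounds yields $\sum_{k=n}^{i_n-1}\alpha^k\|\d^k\|^2\ge c>0$ for all such $n$, contradicting $\sum_k\alpha^k\|\d^k\|^2<+\infty$. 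Therefore \eqref{eq: diminishing lim convergence} holds, the other alternative of the statement being finite termination at a solution of \eqref{eq:problem}. The step I expect to be the real obstacle is exactly this last implication: converting the fixed jump of $S_{\texttt{MVP}}$ into a fixed lower bound on the displacement $\|\x^{i_n}-\x^n\|$ requires the (uniform) continuity of $S_{\texttt{MVP}}$ on the compact set ${\cal F}$, the same property tacitly relied upon in the proof of Theorem~\ref{th: convergence1}.
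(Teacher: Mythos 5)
Your proof follows essentially the same route as the paper's: the paper likewise reuses feasibility and \eqref{eq: suff-desc} from Theorem \ref{th: convergence1}, applies the Descent Lemma (your exact quadratic expansion gives the same bound, since $f$ is quadratic), uses $\alpha^k \to 0$ to absorb the curvature term into a uniform $\bar\rho>0$ for all $k\ge \bar k$, deduces $\sum_k \alpha^k\|\d^k\|^2<+\infty$ and hence $\liminf_k\|\d^k\|=0$ and $\liminf_k S_{\texttt{MVP}}(\x^k)=0$, and then closes with the same $\limsup$ contradiction argument as in Theorem \ref{th: convergence1}. The delicate final step you flag (turning the jump of $S_{\texttt{MVP}}$ into a displacement bound) is treated no more rigorously in the paper itself, so your proposal matches the published proof in both substance and level of detail.
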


\begin{proof}
 Note that feasibility of the sequence $\{\xbold^k\}$ and inequality \eqref{eq: suff-desc} hold, see proof of Theorem \ref{th: convergence1}.

 For any given $k \ge 0$ we can write (Descent Lemma \cite{bertbook:95}):
 \begin{align}
  & \fbold(\xbold^{k+1})-\fbold(\xbold^{k}) \leq \alpha^k \nabla \fbold(\xbold^{k})\trt \dbold^k +
  \frac{(\alpha^k)^2 \lambda_{\max}^{\Qbold}}{2} \|\dbold^k\|^2. \label{eq: descent lemma}
 \end{align}
  By using (ii) and \eqref{eq: suff-desc} we can rewrite inequality \eqref{eq: descent lemma}:
 \begin{equation}\label{eq: diminishing 1}
 \fbold(\xbold^{k+1})-\fbold(\xbold^{k}) \leq
  \alpha^k \left( - \rho + \frac{\alpha^k \lambda_{\max}^{\Qbold}}{2} \right) \|\dbold^k\|^2,
 \end{equation}
 where $\rho>0$ is the minimum among $\underline{\tau}$ and all the minimum eigenvalues of all the positive definite principal submatrices of $\Qbold$.
 Since, by \eqref{eq: diminishing_rule}, $\alpha^k \to 0$ it follows that there exist $\bar \rho > 0$ and $\bar k$ sufficiently large such that for all $k \geq \bar k$ inequality \eqref{eq: diminishing 1} implies:
 \begin{equation*}
 \fbold(\xbold^{k+1})-\fbold(\xbold^{k}) \leq
  - \alpha^k \bar \rho \|\dbold^k\|^2.
 \end{equation*}
 Since, as said in the proof of Theorem \ref{th: convergence1}, $-\infty < \fbold(\bar \xbold) - \fbold(\xbold^{\bar k})$ for all $\xbold^{\bar k} \in {\cal F}$ and any limit point $\bar \x$ of $\{\x^k\}$, in a similar way we can write
 \begin{equation}\label{alfadinf}
  \displaystyle \sum_{k=\bar k}^\infty \alpha^k \|\dbold^k\|^2 < + \infty.
 \end{equation}
 By \eqref{eq: diminishing_rule}, $\sum_{k={\bar k}}^{\infty} \alpha^k = +\infty$, and then we obtain
 \begin{equation*}
  \displaystyle \liminf_{k \to \infty} \|\dbold^k\| = 0.
 \end{equation*}
 Now since each ${\cal J}^k$ contains a descent block at $\xbold^k$, by \eqref{suffstep} we can conclude that
 \begin{equation*}
  \displaystyle \liminf_{k \to \infty} S_{\texttt{MVP}}(\xbold^k) = 0,
 \end{equation*}
 and the thesis follows under the same reasoning of the proof of Theorem \ref{th: convergence1}.\QEDA%%%%
\end{proof}

As stated in Theorem \ref{th: convergence2}, a diminishing stepsize rule requires all iterations to be descent.
In certain applications (e.g. when variables are randomly partitioned), it could be useful to relax this condition, requiring that only a subsequence of the iterations are descent, as well
as for Theorem \ref{th: convergence1}. This is formalized in the next theorem where we assume the additional mild hypothesis of monotonicity of sequence $\{\alpha^k\}$.
\begin{theorem}\label{th: convergence3}
 Let $\{\xbold^k\}$ be the sequence generated by Algorithm \ref{al: svm decomposition} where $\alpha^k\in(0,1]$ at {\bf S.5} satisfies \eqref{eq: diminishing_rule}.
 Assume that for all iterations $k$
 \begin{enumerate}[\rm (i)]
 \item a descent iteration $\tilde k$ with $ k\le \tilde k\le k+L$ for a finite $ L \ge 0$ is generated;
 % there exists an integer $ L \ge 0$ such that there exists a descent iteration $\tilde k$ with $ k\le \tilde k\le k+L$;
  \item  either $\tau^k_i \geq \underline{\tau} > 0$ or $Q_{P_i P_i} \succ 0$, for all $P_i \in {\cal J}^k$;
  \item  $\alpha^{k} \ge \alpha^{k+1}$;
 \end{enumerate}
 then either Algorithm \ref{al: svm decomposition} converges in a finite number of iterations to a solution of problem \eqref{eq:problem} or $\{\xbold^k\}$ admits a limit point and \eqref{eq: diminishing lim convergence} holds.
\end{theorem}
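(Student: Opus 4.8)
\textbf{Proof plan for Theorem \ref{th: convergence3}.}

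The plan is to combine the sufficient-decrease machinery from Theorem \ref{th: convergence1} with the Descent Lemma argument from Theorem \ref{th: convergence2}, now taking care that only a subsequence of iterations is guaranteed to be descent. First I would note, exactly as in the previous two proofs, that the sequence $\{\xbold^k\}$ is feasible and that the key inequality \eqref{eq: suff-desc}, namely $\nabla_{P_i}f_{P_i}(\x_{P_i}^k,\x_{-P_i}^k)\trt \d_{P_i}^k\le -(\tau^k_i+\lambda^{Q_{P_iP_i}}_{\min})\|\d_{P_i}^k\|^2$, holds for every $P_i\in{\cal J}^k$ by assumption (ii); summing over blocks gives $\nabla\fbold(\xbold^k)\trt\dbold^k\le -\rho\|\dbold^k\|^2$ with $\rho>0$ as in Theorem \ref{th: convergence2}. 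Then, from the Descent Lemma \eqref{eq: descent lemma} together with \eqref{eq: diminishing_rule} ($\alpha^k\to0$), there exist $\bar\rho>0$ and $\bar k$ such that for all $k\ge\bar k$ one has $\fbold(\xbold^{k+1})-\fbold(\xbold^k)\le -\alpha^k\bar\rho\|\dbold^k\|^2$. Since $\fbold$ is bounded below on the compact set ${\cal F}$, this telescopes to $\sum_{k\ge\bar k}\alpha^k\|\dbold^k\|^2<+\infty$; in particular $\sum_{k\ge\bar k,\,k\in\tilde K}\alpha^k\|\dbold^k\|^2<+\infty$ where $\tilde K$ is the infinite set of descent iterations supplied by assumption (i).

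Next I would argue, along the descent subsequence, that $\liminf_{k\to\infty,\,k\in\tilde K}\|\dbold^k\|=0$. Here is where assumption (iii) (monotonicity of $\alpha^k$) and the bounded gap $L$ between consecutive descent iterations enter. Because descent iterations occur at least every $L+1$ steps, for each descent iteration $\tilde k$ there is a descent iteration within $[\,\tilde k+1,\tilde k+L+1\,]$; using $\alpha^{\tilde k}\ge\alpha^{\tilde k+1}\ge\cdots$, one can bound $\alpha^{\tilde k}$ from below by an average of the $\alpha$'s over a block of $L+1$ consecutive indices, and the divergence $\sum_k\alpha^k=+\infty$ then forces $\sum_{k\in\tilde K}\alpha^k=+\infty$ (monotonicity prevents the mass of the series from concentrating away from $\tilde K$). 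Combined with $\sum_{k\in\tilde K}\alpha^k\|\dbold^k\|^2<+\infty$, this yields $\liminf_{k\to\infty,\,k\in\tilde K}\|\dbold^k\|=0$. Since each ${\cal J}^{\tilde k}$ with $\tilde k\in\tilde K$ contains a descent block, \eqref{suffstep} gives $\|\dbold^{\tilde k}\|\ge\|\widehat\x_{P_i}-\x_{P_i}^{\tilde k}\|\ge\epsilon S_{\texttt{MVP}}(\xbold^{\tilde k})$, hence $\liminf_{k\to\infty,\,k\in\tilde K}S_{\texttt{MVP}}(\xbold^k)=0$, and therefore $\liminf_{k\to\infty}S_{\texttt{MVP}}(\xbold^k)=0$ since $S_{\texttt{MVP}}\ge0$.

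Finally I would upgrade this $\liminf$ to a full limit exactly as at the end of the proof of Theorem \ref{th: convergence1}: suppose $\limsup_{k\to\infty}S_{\texttt{MVP}}(\xbold^k)>0$; pick $\gamma>0$ small so that $S_{\texttt{MVP}}(\xbold^k)>\gamma$ infinitely often and $S_{\texttt{MVP}}(\xbold^k)<\gamma/2$ infinitely often; extract the index set ${\cal N}$ and the integers $i_n>n$ with $S_{\texttt{MVP}}(\xbold^n)<\gamma/2$, $S_{\texttt{MVP}}(\xbold^{i_n})>\gamma$; use $|S_{\texttt{MVP}}(\xbold^{k+1})-S_{\texttt{MVP}}(\xbold^k)|\le c\,\alpha^k\|\dbold^k\|$ for a suitable constant $c$ (Lipschitz continuity of $\x\mapsto S_{\texttt{MVP}}(\x)$, which in turn follows from Lipschitz continuity of $\nabla f$ and of the solution map of \eqref{eq: problem i-th proximal} under (ii)) to conclude $\sum_{k=n}^{i_n-1}\alpha^k\|\dbold^k\|\ge\gamma/(2c)>0$ for all $n\in{\cal N}$, contradicting $\sum_k\alpha^k\|\dbold^k\|^2<+\infty$ combined with $\alpha^k\|\dbold^k\|\to0$. (One must be a little careful: the contradiction is obtained from $\sum\alpha^k\|\dbold^k\|^2<\infty$ plus boundedness of $\|\dbold^k\|$ giving $\sum_{{\cal N}}(\sum_{k=n}^{i_n-1}\alpha^k\|\dbold^k\|)^2\ge$ a positive constant while each partial sum telescopes down — the same bookkeeping as in Theorem \ref{th: convergence1}.) The finite-termination alternative is handled as before via Proposition \ref{pr: M=0}.

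\textbf{Main obstacle.} The delicate point is the second paragraph: transferring the summability/divergence information from the full iteration sequence to the descent subsequence $\tilde K$. Assumptions (i) and (iii) are precisely what make this work — without monotonicity of $\{\alpha^k\}$ the diminishing series could in principle put all its weight on non-descent iterations — so the bulk of the new work compared to Theorems \ref{th: convergence1} and \ref{th: convergence2} is the elementary but careful estimate showing $\sum_{k\in\tilde K}\alpha^k=+\infty$ and that the tail sums $\sum_{k=n}^{i_n-1}\alpha^k\|\dbold^k\|$ cannot all be small.
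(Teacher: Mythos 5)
Your proposal follows the paper's proof essentially step by step: it reuses the Descent-Lemma/diminishing-stepsize argument of Theorem \ref{th: convergence2} to get $\sum_{k\ge\bar k}\alpha^k\|\dbold^k\|^2<+\infty$, then uses assumptions (i) and (iii) to transfer the divergence $\sum_k\alpha^k=+\infty$ to the descent subsequence $\tilde K$ (the paper formalizes exactly your ``block of $L$ consecutive indices plus monotonicity'' estimate as the chain $+\infty=\sum_{k}\sum_{h=0}^{L-1}\alpha^{Lk+h}\le L\sum_k\alpha^{Lk}\le L\sum_{k\in\tilde K}\alpha^k$), and finally upgrades the resulting $\liminf S_{\texttt{MVP}}(\xbold^k)=0$ to a full limit by the same contradiction bookkeeping as in Theorem \ref{th: convergence1}. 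This matches the paper's route, so no further comparison is needed (your aside about Lipschitz continuity of $S_{\texttt{MVP}}$ is not used by the paper and is not needed for its argument).
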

\begin{proof}
Following the same reasoning of Theorem \ref{th: convergence2}, inequality \eqref{alfadinf} holds.
By condition (i) we can define an infinite subsequence $\{k\}_{\tilde K}$ containing only descent iterations.
 Then by \eqref{alfadinf} it follows that
 \begin{equation}\label{eq: diminishing liminf convergence 1 tilde 3}
  \displaystyle \sum_{k={\bar k},k\in \tilde K}^\infty \alpha^k \|\dbold^k\|^2 < + \infty.
 \end{equation}
 We can write the following chain of inequalities
 \begin{align*}
  +\infty =& \sum_{k={\bar k}+1}^{\infty} \sum_{h=0}^{L-1} \alpha^{L\cdot k+h} \le L \sum_{k={\bar k}+1}^{\infty} \alpha^{L\cdot k}\le L \sum_{k={\bar k},k\in \tilde K}^{\infty} \alpha^k,
 \end{align*}
where the equality is due to \eqref{eq: diminishing_rule}, the first inequality holds by (iii) and the second inequality holds by (i) and (iii).
Then the thesis follows from the same reasoning of the proof of Theorem \ref{th: convergence1}.\QEDA%%%%
\end{proof}

\section{Construction of the Partitions}\label{sec:partition}
To make the results stated in the previous section of practical interest, the major difficulty is to ensure that an iteration is descent in the sense that at least one descent block, according to Definition \ref{dblock}, is selected.
Next lemma gives a relation between the steplenght produced optimizing over a generic block $P_i$ and the one produced optimizing over any violating pair $(\bar i,\bar j)$ belonging to $P_i$. This result will be useful in order to practically build a descent block and it is used in Theorem \ref{th: wss}.
In this section we use the simplified assumption that any principal submatrix of $Q$ of
 order $2$ is positive definite.
%\begin{lemma}\label{lemma: wss}
% Let $\xbold^k$ be a feasible point for problem \eqref{eq:problem},
% let $(\bar i,\bar j)$ be a pair of indices such that $\bar i \in I_{up}(\xbold^k)$ and $\bar j \in I_{low}(\xbold^k)$.
% Suppose that a block $P_i \subseteq \{1, \ldots, n\}$ exists such that:
% \begin{enumerate}[(i)]
%  \item $(\bar i,\bar j) \subseteq P_i$;
%  \item any principal submatrix of $Q_{P_i P_i}$ of
% order $2$ is positive definite;
% \end{enumerate}
% Let $\bar \xbold^k$ be\todo{SOSTITUIREI quanto segue CON: Let $\bar \xbold^k$ be defined as $\bar \xbold^k= \xbold^k+\alpha d^{(\bar i,\bar j)}$ with $d^{(\bar i,\bar j)}$ defined in \eqref{smo_generic} and  $\alpha$ computed as in \eqref{eq:alfa_smo}} a solution of the following problem
%\begin{equation}\label{eq: prob SMO}
%\min_{\x_{(\bar i,\bar j)} \in {\cal F}_{(\bar i,\bar j)}} \; f_{(\bar i,\bar j)} \left(\x_{(\bar i,\bar j)}, \xbold^k_{{\{1,\ldots,n\} \setminus (\bar i,\bar j)}} \right).
%\end{equation}
% Then there exists a scalar $\bar \epsilon>0$ such that
% \begin{equation}\label{eq: pair descent}
%  \|\widehat \xbold^k_{P_i} - \x_{P_i}^k\| \geq \bar\epsilon \|\bar \xbold^k - \xbold^k\|,
% \end{equation}
% where we recall that $\widehat \xbold^k_{P_i}$ is a solution of problem \eqref{eq: problem i-th proximal}.
%\end{lemma}
\begin{lemma}\label{lemma: wss}
Assume that any principal submatrix of $Q$ of
 order $2$ is positive definite.
 Let $\xbold^k$ be a feasible point for problem \eqref{eq:problem} and let $(\bar i,\bar j)\in I_{up}(\xbold^k)\times I_{low}(\xbold^k)$.
 Suppose that a block $P_i \subseteq \{1, \ldots, n\}$ exists such that
 $(\bar i,\bar j) \subseteq P_i$.
  Let $\overline \xbold^k$ be the unique solution of

  \begin{equation}\label{eq: prob SMO}
\min_{\x_{(\bar i,\bar j)} \in {\cal F}_{(\bar i,\bar j)}} \; f_{(\bar i,\bar j)} \left(\x_{(\bar i,\bar j)}, \xbold^k_{{\{1,\ldots,n\} \setminus (\bar i,\bar j)}} \right).
\end{equation}
 Then there exists a scalar $\bar \epsilon>0$ such that
 \begin{equation}\label{eq: pair descent}
  \|\widehat \xbold^k_{P_i} - \x_{P_i}^k\| \geq \bar\epsilon \|\overline \xbold^k - \xbold^k\|,
 \end{equation}
 where  $\widehat \xbold^k_{P_i}$ is a solution of problem \eqref{eq: problem i-th proximal}.
\end{lemma}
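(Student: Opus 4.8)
My approach is to relate the step taken by optimizing the big block $P_i$ to the step taken by optimizing over the embedded pair $(\bar i,\bar j)$, by showing that the former is at least a constant fraction of the latter. The key observation is that both $\widehat{\xbold}^k_{P_i}$ and $\overline{\xbold}^k$ are solutions of \emph{strongly convex} quadratic programs (strong convexity of the big-block problem \eqref{eq: problem i-th proximal} comes from either $\tau^k_i>0$ or, by the standing assumption, from positive definiteness of the order-2 submatrix forcing at worst a mild modification — actually one should be careful here: $Q_{P_iP_i}$ need not be positive definite even if all $2\times 2$ submatrices are, so the proximal term with $\tau^k_i$ is doing the work; I will assume the hypotheses of the convergence theorems, i.e. either $\tau^k_i\geq\underline\tau>0$ or $Q_{P_iP_i}\succ0$).

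First I would set up the error-bound / growth inequality for the big-block problem: since the objective in \eqref{eq: problem i-th proximal} is $\mu$-strongly convex on $\mathcal F^k_{P_i}$ with modulus $\mu = \tau^k_i + \lambda^{Q_{P_iP_i}}_{\min} > 0$, for \emph{any} feasible $\x_{P_i}\in\mathcal F^k_{P_i}$ one has
\[
\phi_{P_i}(\x_{P_i}) - \phi_{P_i}(\widehat\xbold^k_{P_i}) \;\geq\; \frac{\mu}{2}\,\|\x_{P_i} - \widehat\xbold^k_{P_i}\|^2,
\]
where $\phi_{P_i}$ denotes the objective of \eqref{eq: problem i-th proximal}. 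Applying this at $\x_{P_i}=\xbold^k_{P_i}$ gives a \emph{lower} bound $\|\widehat\xbold^k_{P_i}-\xbold^k_{P_i}\|^2 \leq \frac{2}{\mu}(\phi_{P_i}(\xbold^k_{P_i})-\phi_{P_i}(\widehat\xbold^k_{P_i}))$, which is the wrong direction; what I actually want is a lower bound on $\|\widehat\xbold^k_{P_i}-\xbold^k_{P_i}\|$ in terms of the decrease achievable, so I would instead argue: the point $\overline\xbold^k$ extended by $\xbold^k_{P_i\setminus(\bar i,\bar j)}$ on the remaining coordinates of $P_i$ is feasible for \eqref{eq: problem i-th proximal} (it respects the equality constraint $\ybold_{P_i}^T\x_{P_i}=\ybold_{P_i}^T\xbold^k_{P_i}$ and the box), so $\phi_{P_i}$ evaluated there is $\geq \phi_{P_i}(\widehat\xbold^k_{P_i})$. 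Comparing the descent in $\phi_{P_i}$ from $\xbold^k_{P_i}$ to this extended point (which equals the descent of the $2$-variable objective plus a nonnegative proximal penalty bounded by a constant times $\|\overline\xbold^k-\xbold^k\|^2$) against the quadratic-growth lower bound $\frac{\mu}{2}\|\widehat\xbold^k_{P_i}-\xbold^k_{P_i}\|^2 \geq$ (from $L$-smoothness of $\phi_{P_i}$, $L=\lambda^{Q_{P_iP_i}}_{\max}+\tau^k_i$) actually $\phi_{P_i}(\xbold^k_{P_i})-\phi_{P_i}(\widehat\xbold^k_{P_i}) \leq \frac{L}{2}\|\widehat\xbold^k_{P_i}-\xbold^k_{P_i}\|^2$ — wait, I need the decrease to be at least a constant times $\|\overline\xbold^k-\xbold^k\|^2$ and at most $L/2$ times $\|\widehat\xbold^k_{P_i}-\xbold^k_{P_i}\|^2$, which then gives $\|\widehat\xbold^k_{P_i}-\xbold^k_{P_i}\|^2 \geq \frac{c}{L}\|\overline\xbold^k-\xbold^k\|^2$, i.e. \eqref{eq: pair descent} with $\bar\epsilon=\sqrt{c/L}$.

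So the chain I would write is: (1) let $\x^{ext}_{P_i}$ be $\overline\xbold^k$ on coordinates $(\bar i,\bar j)$ and $\xbold^k$ elsewhere in $P_i$; it is feasible for \eqref{eq: problem i-th proximal}; (2) $\phi_{P_i}(\xbold^k_{P_i}) - \phi_{P_i}(\x^{ext}_{P_i}) = \big(f_{(\bar i,\bar j)}(\xbold^k) - f_{(\bar i,\bar j)}(\overline\xbold^k_{(\bar i,\bar j)},\cdot)\big) - \frac{\tau^k_i}{2}\|\overline\xbold^k - \xbold^k\|^2$, and by exact minimization of the strongly convex $2$-variable problem the first term is $\geq \frac{\mu_2}{2}\|\overline\xbold^k-\xbold^k\|^2$ where $\mu_2>0$ is a uniform lower bound on the smallest eigenvalue of $2\times2$ submatrices of $Q$ (here is where the standing assumption and compactness/finiteness of index set is used to get uniformity over all pairs); (3) since also $\phi_{P_i}(\x^{ext}_{P_i}) \geq \phi_{P_i}(\widehat\xbold^k_{P_i})$, we get $\phi_{P_i}(\xbold^k_{P_i}) - \phi_{P_i}(\widehat\xbold^k_{P_i}) \geq (\frac{\mu_2}{2} - \frac{\tau^k_i}{2})\|\overline\xbold^k-\xbold^k\|^2$ — and if $\tau^k_i$ is small this is fine, but if $\tau^k_i$ is large this could be negative, which is a genuine problem. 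The fix: also bound $\phi_{P_i}(\xbold^k_{P_i}) - \phi_{P_i}(\widehat\xbold^k_{P_i}) \geq \frac{\mu}{2}\|\widehat\xbold^k_{P_i}-\xbold^k_{P_i}\|^2 + (\text{first order term}) $ — better, use that $\widehat\xbold^k_{P_i}$ minimizes, so directly $\phi_{P_i}(\xbold^k_{P_i}) - \phi_{P_i}(\widehat\xbold^k_{P_i}) \geq \frac{\mu}{2}\|\widehat\xbold^k_{P_i}-\xbold^k_{P_i}\|^2$ and independently $\geq \frac{\mu_2 - \tau^k_i}{2}\|\overline\xbold^k-\xbold^k\|^2$; combining, and separately handling the proximal term by noting that near $\xbold^k$ the proximal penalty at $\overline\xbold^k$ is small relative to the genuine quadratic decrease only if $\tau^k_i$ is bounded — so in fact I suspect the lemma's $\bar\epsilon$ depends on $\tau^k_i$, and the cleanest statement uses that $\phi_{P_i}(\xbold^k_{P_i})-\phi_{P_i}(\widehat\xbold^k_{P_i}) \geq$ the value obtained by the \emph{constrained} minimization in the $2$-dimensional slice, whose objective already includes matching proximal term, hence is $\geq \frac{\mu_2 + \tau^k_i}{2}\cdot(\text{something})$ — this sign works out correctly because adding the proximal term to the $2$D problem only shifts its minimizer but keeps the decrease positive and comparable.

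\textbf{The main obstacle} is exactly this bookkeeping with the proximal term $\tau^k_i$: one must show the decrease in $\phi_{P_i}$ from the current point, restricted to moving only the pair $(\bar i,\bar j)$, is bounded below by a positive multiple of $\|\overline\xbold^k-\xbold^k\|^2$, \emph{uniformly in $\tau^k_i\geq 0$}. The clean way is to not introduce $\overline\xbold^k$ (the unconstrained-of-proximal $2$D solution) and instead compare to the solution $\widetilde\xbold$ of the $2$-variable problem \emph{with} the proximal term $\frac{\tau^k_i}{2}\|\cdot-\xbold^k\|^2$ added; that solution satisfies $\phi_{P_i}(\xbold^k_{P_i})-\phi_{P_i}(\widetilde{\x}^{ext}_{P_i}) \geq \frac{\mu_2+\tau^k_i}{2}\|\widetilde\xbold - \xbold^k\|^2 \geq 0$, and then one relates $\|\widetilde\xbold-\xbold^k\|$ to $\|\overline\xbold^k-\xbold^k\|$ (they differ by the proximal regularization of a $2\times2$ QP, a ratio bounded in terms of $\mu_2$ and the maximal $2\times2$ eigenvalue, uniformly in $\tau^k_i$), finally chaining $\|\widehat\xbold^k_{P_i}-\xbold^k_{P_i}\|^2 \geq \frac{\mu_2+\tau^k_i}{L}\|\widetilde\xbold-\xbold^k\|^2 \geq \bar\epsilon^2\|\overline\xbold^k-\xbold^k\|^2$ with $L=\lambda^{Q_{P_iP_i}}_{\max}+\tau^k_i$, so the $\tau^k_i$'s partially cancel and $\bar\epsilon$ can be taken uniform. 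The remaining routine points — feasibility of the extension, uniform positivity of $\mu_2$ over the finitely many pairs, uniform bound on $L$ over the finitely many blocks (so that $\bar\epsilon$ does not degenerate) — I would dispatch briefly.
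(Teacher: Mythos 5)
Your argument differs fundamentally from the paper's (which never compares objective values), and it contains a step that fails. The pivotal inequality in your chain is $\phi_{P_i}(\x^k_{P_i})-\phi_{P_i}(\widehat\x^k_{P_i})\leq\frac{L}{2}\|\widehat\x^k_{P_i}-\x^k_{P_i}\|^2$, used to convert a lower bound on the achievable decrease into a lower bound on $\|\widehat\x^k_{P_i}-\x^k_{P_i}\|$. This is false when $\widehat\x^k_{P_i}$ is a \emph{constrained} minimizer sitting on the boundary of ${\cal F}^k_{P_i}$, which is the typical situation in SVM subproblems: expanding the quadratic around $\widehat\x^k_{P_i}$, the decrease contains the first-order term $\nabla\phi_{P_i}(\widehat\x^k_{P_i})\trt(\x^k_{P_i}-\widehat\x^k_{P_i})\geq 0$, which is not controlled by $\|\widehat\x^k_{P_i}-\x^k_{P_i}\|^2$. (One dimension already kills it: minimize $\frac{\eps}{2}x^2+x$ on $[0,1]$ starting from $x^k=1$; the decrease is $1+\eps/2$ while $\frac{L}{2}\|\widehat x-x^k\|^2=\eps/2$, and the problem is even strongly convex.) The only valid upper bound of this kind is first order, roughly (decrease) $\leq\|\nabla\phi_{P_i}(\x^k_{P_i})\|\,\|\widehat\x^k_{P_i}-\x^k_{P_i}\|$, which yields a bound on $\|\widehat\x^k_{P_i}-\x^k_{P_i}\|$ that is quadratic, not linear, in $\|\overline\x^k-\x^k\|$, hence not \eqref{eq: pair descent}. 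A secondary problem: in your proposed fix, the claim that $\|\widetilde\x-\x^k\|$ and $\|\overline\x^k-\x^k\|$ are comparable \emph{uniformly in} $\tau^k_i$ is also false (as $\tau^k_i\to\infty$ the proximally regularized pair solution collapses onto $\x^k$ while $\overline\x^k$ does not move, so the ratio degrades like $1/\tau^k_i$); this would be harmless for the lemma as stated, whose $\bar\epsilon$ may depend on $\tau^k_i$, but it cannot be used to make the constant uniform as you assert. You also import the hypotheses $\tau^k_i\geq\underline\tau>0$ or $Q_{P_iP_i}\succ 0$, which are not part of the lemma and are not needed.

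For contrast, the paper argues through variational inequalities rather than function values, splitting into two cases according to whether the pair direction $\d^{(\bar i,\bar j)}$ is feasible at $\widehat\x^k_{P_i}$. If it is not, the equality constraint forces the component $\widehat x^k_{\bar i}$ or $\widehat x^k_{\bar j}$ onto a bound, and a direct componentwise comparison gives $\|\widehat\x^k_{P_i}-\x^k_{P_i}\|\geq\frac{1}{\sqrt{2}}\|\overline\x^k-\x^k\|$. If it is, combining the optimality condition \eqref{eq: dirder} at $\widehat\x^k_{P_i}$, the minimum principle at $\overline\x^k$ for the two-variable problem, and strong monotonicity of the quadratic restricted to the coordinates $(\bar i,\bar j)$ (this is exactly where the $2\times 2$ positive-definiteness is used, so no positive definiteness of $Q_{P_iP_i}$ and no lower bound on $\tau^k_i$ are required) yields $\|\widehat\x^k_{P_i}-\x^k_{P_i}\|\geq\frac{\sigma}{\tau^k_i+\lambda_{\max}^{Q}}\|\overline\x^k-\x^k\|$. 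If you want to keep a function-value flavor, you would need an error-bound argument tailored to boundary solutions; the smoothness inequality you rely on will not do the job.
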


\begin{proof}
First we note that  $$\overline \xbold^k= \xbold^k+\bar \alpha \d^{(\bar i,\bar j)}$$ with $\d^{(\bar i,\bar j)}$ defined in \eqref{smo_generic} and  $\bar \alpha$ computed as in \eqref{eq:alfa_smo}.
For the sake of notation let us set $\overline \d= \d^{(\bar i,\bar j)}$.
% Let $\bar \d$ be the direction defined componentwise as
% \begin{align}\label{mvpdbar}
%\bar d_r := \begin{cases} \frac{1}{y_r} & \mbox{ if } r={\bar i} \\
%-\frac{1}{y_r} & \mbox{ if } r={\bar j} \\ 0 & \mbox{ otherwise}\end{cases}, \QEDA r = 1, \ldots, n.
%\end{align}
% Let $\bar\alpha$ be the optimal steplenght along $\bar \d$ ($\bar\alpha$ can be computed as in \eqref{eq:alfa_smo}).
% Note that, by assumption (ii), $\bar\xbold^k$ is the unique solution of \eqref{eq: prob SMO}.
% Then it is clear that $\bar\alpha\bar \d = (\bar\xbold^k-\xbold^k)$.

 Since $(\bar i,\bar j) \subseteq P_i$, two cases are possible: (a) $\widehat \xbold^k_{P_i} + \mu \overline \d_{P_i} \notin {\cal F}_{P_i}$
 for all $\mu > 0$, or (b) $\mu>0$ exists such that $\widehat \xbold^k_{P_i} + \mu \overline \d_{P_i} \in {\cal F}_{P_i}$, that is $\overline \d_{P_i}$ is a feasible direction at $\widehat \xbold^k_{P_i}$.
 \begin{enumerate}[(a)]
  \item By construction it holds that $\y_{P_i}\trt \overline \d_{P_i}=0$.
 Then it holds that for all $\mu >0$:
 $$\y_{P_i}\trt(\widehat \xbold^k_{P_i}+\mu \overline \d_{P_i}) = \y_{P_i}\trt \widehat \xbold^k_{P_i} = \y_{P_i}\trt \x_{P_i}^k,$$
 where last equality holds since $\widehat \xbold^k_{P_i} \in {\cal F}_{P_i}$.
Therefore we can conclude that for all $\mu > 0$:
$$\widehat \xbold^k_{P_i} + \mu \overline \d_{P_i}\notin [0,C]^{|P_i|},$$
and then either $\widehat x^k_{\bar i}$ or $\widehat x^k_{\bar j}$ must be on a bound.
In particular, supposing w.l.o.g. that is the component $\bar i$ the one on the bound, if $\overline d_{\bar i} > 0$ then $\widehat x^k_{\bar i} = C$ and then we can write
$$
0 < \bar\alpha \overline d_{\bar i} = \overline x^k_{\bar i} - x^k_{\bar i} \leq C - x^k_{\bar i} = \widehat x^k_{\bar i} - x^k_{\bar i};
$$
otherwise $\overline d_{\bar i} < 0$ then $\widehat x^k_{\bar i} = 0$ and then
$$
0 > \bar\alpha \overline d_{\bar i} = \overline x^k_{\bar i} - x^k_{\bar i} \ge 0 - x^k_{\bar i} = \widehat x^k_{\bar i} - x^k_{\bar i}.
$$
In both cases it holds that
$$
|\bar\alpha\overline d_{\bar i}| \leq |\widehat x^k_{\bar i} - x^k_{\bar i}|.
$$
Therefore noting that $|\bar\alpha \overline d_{\bar i}| = |\bar \alpha|$ and that $|\bar\alpha| \sqrt{2} =  \|\bar \xbold^k - \xbold^k\|$ we can conclude that
$$
 \|\widehat \xbold^k_{P_i} - \x_{P_i}^k\| \ge |\bar \alpha| =  \frac{1}{\sqrt{2}}  \|\overline \xbold^k - \xbold^k\|.
$$

\item Since $\overline \d_{P_i}$ is a feasible direction at $\widehat \xbold^k_{P_i}$ and since $\widehat \xbold^k_{P_i}$ is an optimal solution of problem \eqref{eq: problem i-th proximal} at $\xbold^k$, by \eqref{eq: dirder}, we can write
\begin{equation}\label{eq: cond optimal point}
\left[ \nabla_{P_i} f_{P_i} \left(\widehat \xbold^k_{P_i}, \xbold^k_{-P_i}\right) + \tau^k_i (\widehat \xbold^k_{P_i} - \x_{P_i}^k) \right]\trt \overline \d_{P_i} \geq 0.
\end{equation}
Since $\bar \xbold^k$ is a solution of \eqref{eq: prob SMO}, and being $-\overline \d$ a feasible direction for \eqref{eq: prob SMO} at $\bar \xbold^k$, then, by the minimum principle and since $(\bar i, \bar j) \subseteq P_i$, we can write
$$
\nabla_{P_i} f_{P_i} \left(\overline \x_{P_i}^k, \xbold^k_{-P_i}\right)\trt \overline \d_{P_i} \leq 0.
$$
And therefore by \eqref{eq: cond optimal point} we can write
\begin{align}\label{eq: cond gradients}
\left[ \nabla_{P_i} f_{P_i} \left(\widehat \xbold^k_{P_i}, \xbold^k_{-P_i}\right) + \tau^k_i (\widehat \xbold^k_{P_i} - \x_{P_i}^k) \right]\trt \overline \d_{P_i} \geq \nonumber\\
 \nabla_{P_i} f_{P_i} \left(\overline \x_{P_i}^k, \xbold^k_{-P_i}\right)\trt \overline \d_{P_i}.
\end{align}
By assumptions, $\sigma > 0$ exists such that
\begin{align}\label{eq: str monotonicity}
 \sigma \|\overline \x_{P_i}^k - \x_{P_i}^k \|^2 &\leq (\overline \x_{P_i}^k - \x_{P_i}^k)\trt Q_{P_i P_i} (\overline \x_{P_i}^k - \x_{P_i}^k) \nonumber\\
 &= \left[ \nabla_{P_i} f_{P_i} \left(\overline \x_{P_i}^k, \xbold^k_{-P_i}\right) - \nabla_{P_i} f_{P_i} \left(\x_{P_i}^k, \xbold^k_{-P_i}\right) \right]\trt \bar\alpha \overline \d_{P_i}.
\end{align}
Then combining \eqref{eq: cond gradients} and \eqref{eq: str monotonicity} we can write
\begin{align*}
 &\sigma \|\overline \x_{P_i}^k - \x_{P_i}^k \|^2 \leq \tau^k_i (\widehat \xbold^k_{P_i} - \x_{P_i}^k)\trt \bar\alpha \overline \d_{P_i} + \nonumber\\
 &\left[ \nabla_{P_i} f_{P_i} \left(\widehat \xbold^k_{P_i}, \xbold^k_{-P_i}\right) - \nabla_{P_i} f_{P_i} \left(\x_{P_i}^k, \xbold^k_{-P_i}\right) \right]\trt \bar\alpha \overline \d_{P_i} \leq \nonumber\\
 &(\tau^k_i + \|Q_{P_iP_i}\|) \|\widehat \xbold^k_{P_i} - \x_{P_i}^k\| \|\bar\alpha\overline \d_{P_i}\| = \nonumber\\
 &(\tau^k_i + \lambda_{\max}^{\Qbold}) \|\widehat \xbold^k_{P_i} - \x_{P_i}^k\| \|\overline \x_{P_i}^k - \x_{P_i}^k\|.
\end{align*}
Therefore we obtain
$$
\|\widehat \xbold^k_{P_i} - \x_{P_i}^k\| \geq
\frac{\sigma}{\tau^k_i + \lambda_{\max}^{\Qbold}} \|\overline \x_{P_i}^k - \x_{P_i}^k \| = \frac{\sigma}{\tau^k_i + \lambda_{\max}^{\Qbold}}  \|\overline \xbold^k - \xbold^k\|,
$$
and finally we have the proof.
\end{enumerate} \QEDA%%%%
\end{proof}
\noindent
\begin{theorem}\label{th: wss}
Assume that any principal submatrix of $Q$ of
 order $2$ is positive definite.
 Let $\xbold^k$ be a feasible point for problem \eqref{eq:problem} and let $(\bar i,\bar j)$ be a pair of indices such that $\bar i \in I_{up}(\xbold^k)$ and $\bar j \in I_{low}(\xbold^k)$.
 Suppose that a block $P_i \subseteq \{1, \ldots, n\}$ exists such that
 $(\bar i,\bar j) \subseteq P_i$.
Let $\bar \xbold^k$ be the unique solution of problem \eqref{eq: prob SMO} and suppose that $\widetilde \epsilon > 0$ exists such that
 \begin{equation}\label{eq: pair with mvp}
  \|\bar \xbold^k - \xbold^k\| \geq \widetilde \epsilon S_{\texttt{MVP}}(\xbold^k).
 \end{equation}
 Then $P_i$ is a descent block.
\end{theorem}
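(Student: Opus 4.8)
The plan is to derive the descent-block inequality \eqref{suffstep} directly by composing Lemma~\ref{lemma: wss} with the hypothesis \eqref{eq: pair with mvp}; no new ideas are needed beyond what Lemma~\ref{lemma: wss} already supplies.

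First I would check that the hypotheses of Lemma~\ref{lemma: wss} are in force: $\Qbold$ has all order-$2$ principal submatrices positive definite, $\xbold^k$ is feasible, $(\bar i,\bar j)\in I_{up}(\xbold^k)\times I_{low}(\xbold^k)$, and $(\bar i,\bar j)\subseteq P_i$ --- these are exactly the standing assumptions of the theorem. Hence Lemma~\ref{lemma: wss} applies and yields a constant $\bar\epsilon>0$ with
$$
\|\widehat\xbold^k_{P_i}-\xbold^k_{P_i}\| \ \ge\ \bar\epsilon\,\|\bar\xbold^k-\xbold^k\|,
$$
where $\widehat\xbold^k_{P_i}$ solves the proximal subproblem \eqref{eq: problem i-th proximal} on block $P_i$ and $\bar\xbold^k$ solves the SMO subproblem \eqref{eq: prob SMO}. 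Next I would plug in the lower bound \eqref{eq: pair with mvp}, namely $\|\bar\xbold^k-\xbold^k\|\ge\widetilde\epsilon\, S_{\texttt{MVP}}(\xbold^k)$, to obtain
$$
\|\widehat\xbold^k_{P_i}-\xbold^k_{P_i}\| \ \ge\ \bar\epsilon\,\widetilde\epsilon\, S_{\texttt{MVP}}(\xbold^k).
$$
Finally, choosing $\epsilon:=\bar\epsilon\,\widetilde\epsilon>0$ shows that $P_i$ satisfies condition \eqref{suffstep}, i.e. $P_i$ is a descent block in the sense of Definition~\ref{dblock}.

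I do not anticipate a real obstacle here, since all the technical work --- in particular the case split in the proof of Lemma~\ref{lemma: wss} according to whether the SMO direction $\overline\d_{P_i}$ stays feasible at $\widehat\xbold^k_{P_i}$ --- has already been carried out. The only point I would flag is the nature of the constant: from Lemma~\ref{lemma: wss} one has $\bar\epsilon=\min\{1/\sqrt2,\ \sigma/(\tau^k_i+\lambda_{\max}^{\Qbold})\}$, so $\epsilon=\bar\epsilon\,\widetilde\epsilon$ depends on the proximal parameter $\tau^k_i$ of the selected block. For the present statement, which concerns a single feasible point $\xbold^k$, this is immaterial --- positivity of $\epsilon$ is all that Definition~\ref{dblock} requires. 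It becomes relevant only when one wants this theorem to certify descent iterations \emph{uniformly} over $k$ for the convergence results of Section~\ref{sec:convergence}, in which case one additionally needs the $\tau^k_i$ (hence $\bar\epsilon$) to remain bounded, a point taken up in the following section.
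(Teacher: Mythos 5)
Your proposal is correct and follows the paper's own argument exactly: invoke Lemma~\ref{lemma: wss} to get \eqref{eq: pair descent} and chain it with \eqref{eq: pair with mvp}, yielding \eqref{suffstep} with $\epsilon=\bar\epsilon\,\widetilde\epsilon>0$. Your added remark about the dependence of $\bar\epsilon$ on $\tau^k_i$ and its relevance for uniformity over $k$ is a sensible observation but not needed for this statement.
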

\begin{proof}
 By Lemma \ref{lemma: wss} we know that \eqref{eq: pair descent} holds. Therefore by combining \eqref{eq: pair descent} and \eqref{eq: pair with mvp} we obtain the proof. \QEDA%%%%
\end{proof}

Theorem \ref{th: wss} shows that we can build a descent block at the cost of computing a pair that satisfies \eqref{eq: pair with mvp}. Clearly the most violating pair does it, but it is easy to see that
any pair that ``sufficiently'' violates KKT conditions can be used as well.

Now we give a further theoretical result which guarantees that at each iteration of Algorithm \ref{al: svm decomposition} at least one descent block can be built.
\begin{theorem}\label{termination}
 Let $\xbold^k$ be a feasible, but not optimal, point for problem \eqref{eq:problem}
 then at least one descent block $P_i \subseteq\{1, \ldots, n\}$ exists.
\end{theorem}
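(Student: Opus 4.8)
The plan is to exhibit a descent block explicitly --- the two-element block formed by a most violating pair --- and then to invoke Theorem~\ref{th: wss} to certify that it is descent.

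First I would extract from the non-optimality of $\xbold^k$ exactly what is needed to even speak of a most violating pair. By Proposition~\ref{pr: M=0}, since $\xbold^k \in {\cal F}$ is not optimal, none of the three optimality alternatives can hold, so $I_{up}(\xbold^k) \neq \emptyset$, $I_{low}(\xbold^k) \neq \emptyset$ and $S_{\texttt{MVP}}(\xbold^k) > 0$. In particular a most violating pair $(i_{\texttt{MVP}},j_{\texttt{MVP}}) \in I_{up}^{MVP}(\xbold^k) \times I_{low}^{MVP}(\xbold^k)$ exists, with $i_{\texttt{MVP}} \in I_{up}(\xbold^k)$ and $j_{\texttt{MVP}} \in I_{low}(\xbold^k)$.

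Next I would set $P_i := \{\, i_{\texttt{MVP}},\, j_{\texttt{MVP}} \,\}$ and verify the hypotheses of Theorem~\ref{th: wss} with the choice $(\bar i,\bar j) = (i_{\texttt{MVP}},j_{\texttt{MVP}})$. Trivially $(\bar i,\bar j) \subseteq P_i$. The subproblem \eqref{eq: prob SMO} built on this pair is exactly the exact one-dimensional minimization of $f$ along $\d_{\texttt{MVP}}$ issued from $\xbold^k$ (its stepsize formula coincides with \eqref{eq:alfa_smo}), so its unique solution is $\bar\xbold^k = \xbold^k + \alpha_{\texttt{MVP}}\,\d_{\texttt{MVP}} = \xbold^k_{\texttt{MVP}}$; hence $\|\bar\xbold^k - \xbold^k\| = S_{\texttt{MVP}}(\xbold^k)$, and condition \eqref{eq: pair with mvp} holds with $\widetilde\epsilon = 1$ (indeed with equality). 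The standing assumption of this section, that every $2\times 2$ principal submatrix of $Q$ is positive definite, makes $Q_{P_iP_i}\succ0$, so the proximal subproblem \eqref{eq: problem i-th proximal} on $P_i$ has a well-defined unique minimizer $\widehat\xbold^k_{P_i}$ and Lemma~\ref{lemma: wss} (invoked within Theorem~\ref{th: wss}) applies. Theorem~\ref{th: wss} then declares $P_i$ a descent block, completing the argument.

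I do not expect a genuine obstacle here: all the real work is already carried by Theorem~\ref{th: wss} and Proposition~\ref{pr: M=0}. The only delicate points are (a) that $S_{\texttt{MVP}}(\xbold^k) > 0$ --- so that the descent inequality \eqref{suffstep} is not vacuously satisfied by every block --- which is precisely what non-optimality buys through Proposition~\ref{pr: M=0}; and (b) the observation that $\bar\xbold^k$ in \eqref{eq: prob SMO} depends only on the pair $(\bar i,\bar j)$ and not on the remaining indices of the block, so one is free to enlarge $P_i$ to any block containing the most violating pair and still conclude, via Theorem~\ref{th: wss}, that it is a descent block.
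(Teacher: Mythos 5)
Your proposal is correct and follows essentially the same route as the paper: the paper's own proof invokes Proposition~\ref{pr: M=0} to get $I_{up}(\xbold^k)\neq\emptyset$, $I_{low}(\xbold^k)\neq\emptyset$ and $S_{\texttt{MVP}}(\xbold^k)>0$, and then takes $P_i=(i_\texttt{MVP},j_\texttt{MVP})$ as the descent block, implicitly via Theorem~\ref{th: wss} applied with $\widetilde\epsilon=1$. You merely spell out the verification of the hypotheses of Theorem~\ref{th: wss} (including the section's standing assumption on $2\times2$ principal submatrices), which the paper leaves tacit.
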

\begin{proof}
By Proposition \ref{pr: M=0}, if $\xbold^k$ is not optimal then $I_{up}(\xbold^k) \neq \emptyset$, $I_{low}(\xbold^k) \neq \emptyset$ and $S_{\texttt{MVP}}(\xbold^k)>0$. Therefore $P_i = (i_\texttt{MVP},j_\texttt{MVP})$ is a descent block. \QEDA%%%%
\end{proof}

\section{Global Convergence in a Realistic Setting}\label{sec:realistic}

So far we have proved that, under some suitable conditions, Algorithm \ref{al: svm decomposition} either converges in a finite number of iterations to a solution of problem \eqref{eq:problem} or the produced sequence $\{\x^k\}$  satisfies \eqref{eq: diminishing lim convergence}.
However the fact that $S_{\texttt{MVP}}(\xbold^k)$ goes to zero is not enough to guarantee asymptotic convergence of Algorithm \ref{al: svm decomposition} to a solution of problem \eqref{eq:problem}.
Indeed,  this is due to the discontinuous nature of indices sets $I_{up}$ and $I_{low}$ that enters the definition of $S_{\texttt{MVP}}(\xbold^k)$.
Actually, this is a well known theoretical issue in decomposition methods for the SVM training problem. However, even in the case when the algorithm were proved to asymptotically converge to an optimal solution, the validity of a stopping criterion based on the KKT conditions must be verified \cite{lin:stopping}.
A possible way to sorting out these theoretical issues is to use some theoretical tricks. For example by properly inserting some standard MVP iterations in the produced sequence $\{x^k\}$ \cite{hybrid} or by dealing with $\epsilon-$solutions \cite{keerthigilbert:02}.
All these theoretical efforts can be encompassed in a realistic numerical setting.
Indeed all the papers discussing about decomposition methods rely on the fact that
the indices sets $I_{up}$ and $I_{low}$ can be computed in exact arithmetic.
In practice what it can actually be computed are the following $\epsilon-$perturbations of sets $I_{up}$ and $I_{low}$
\begin{equation*}
  I^{\epsilon}_{up}(\xbold) := \{r \subseteq \{1,\dots,n\} : \ x_r\leq C-\epsilon, \ y_r=1,\mbox{ or } x_r\geq \epsilon, \ y_r=-1 \},
\end{equation*}
\begin{equation*}
 I^{\epsilon}_{low}(\xbold) := \{r \subseteq \{1,\dots,n\} : \ x_r\leq C-\epsilon, \ y_r=-1,\mbox{ or } x_r\geq \epsilon, \ y_r=1 \},
\end{equation*}
with  $\epsilon>0$.
Consequently we can define
at a feasible point $\x$
the following quantities
$$ m^{\epsilon}(\xbold)=\max_{r \in  I^{\epsilon}_{up}(\xbold)} -\frac{\nabla f(\xbold)_r}{y_r}, \qquad  M^{\epsilon}(\xbold)= \min_{r \in I^{\epsilon}_{low}(\xbold)} -\frac{\nabla f(\xbold)_r}{y_r}.
$$
As a matter-of-fact an effective optimality condition which can be used is
\begin{equation}\label{optimality real}
 m^{\epsilon}(\x^k) \leq M^{\epsilon}(\x^k) + \eta,
\end{equation}
where $\eta > 0$ is a given tolerance.
Note that any asymptotically convergent decomposition algorithm can actually converge only to a point satisfying \eqref{optimality real}, rather than \eqref{optimality}.

It is easy to see that
$  I^{\epsilon}_{up}(\xbold)\subseteq   I_{up}(\xbold)$
and $  I^{\epsilon}_{low}(\xbold)\subseteq   I_{low}(\xbold)$ for all $\epsilon>0$. Furthemore
in \cite{phdtesi:risi}, it has been proved the following result.

\begin{proposition}\label{epsact}
Let $\{x^k\}$ be a  sequence of feasible points converging to a
point $\bar x \in {\cal F}$. Then, there exists a scalar $\bar
\epsilon >0$ (depending only on $\bar x$) such that for every
$\epsilon\in (0,\bar \epsilon]$ there exists an index
$\bar k=k_{\epsilon}$ for which
$$
I^{\epsilon}_{up}(\xbold^k)\equiv I_{up}(\xbold^k) \quad\quad {\rm and} \quad\quad
I^{\epsilon}_{low}(\xbold^k)\equiv  I_{low}(\xbold^k) \quad\quad \mbox{for~all}~k\ge
\bar k.
$$
\end{proposition}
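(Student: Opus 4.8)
The plan is to argue componentwise, exploiting that whether an index $r$ belongs to any of the sets $I_{up}(x)$, $I_{low}(x)$, $I^{\epsilon}_{up}(x)$, $I^{\epsilon}_{low}(x)$ is decided only by the position of the single scalar $x_r$ relative to the thresholds $0,\epsilon,C-\epsilon,C$ and by the fixed sign $y_r$. The obstruction the statement must overcome is exactly the discontinuity of $I_{up}$ and $I_{low}$ at the faces of the box $[0,C]^n$; the $\epsilon$-perturbation is built to neutralise it, and the core of the argument is to show that along the tail of a convergent feasible sequence the perturbed sets ``lock'' onto the sets associated with the limit point $\bar x$.

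Concretely, I would first fix the radius
\[
\bar\epsilon := \min\Big(\{\,C/2\,\}\ \cup\ \big\{\,\tfrac12\min\{\bar x_r,\,C-\bar x_r\}\ :\ 0<\bar x_r<C\,\big\}\Big),
\]
a strictly positive number that depends only on $\bar x$ since $\{1,\dots,n\}$ is finite. Given $\epsilon\in(0,\bar\epsilon]$, convergence $x^k\to\bar x$ provides $\bar k=k_\epsilon$ with $|x^k_r-\bar x_r|<\epsilon$ for every $r$ and every $k\ge\bar k$. I would then run the following three-way check on each index $r$: (i) if $0<\bar x_r<C$, the choice of $\bar\epsilon$ forces $\epsilon<x^k_r<C-\epsilon$ for $k\ge\bar k$, so $r$ satisfies every threshold condition and therefore belongs to each of the four sets at $x^k$ in exactly the pattern it has at $\bar x$; (ii) if $\bar x_r=0$, then $0\le x^k_r<\epsilon\le C-\epsilon$ for $k\ge\bar k$, and comparing the defining inequalities gives $r\in I^{\epsilon}_{up}(x^k)\Leftrightarrow y_r=1\Leftrightarrow r\in I_{up}(\bar x)$ and $r\in I^{\epsilon}_{low}(x^k)\Leftrightarrow y_r=-1\Leftrightarrow r\in I_{low}(\bar x)$; (iii) the case $\bar x_r=C$ is symmetric to (ii), with the two bounds and the two labels interchanged.

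Assembling the three cases shows that, for all $k\ge\bar k$, $I^{\epsilon}_{up}(x^k)=I_{up}(\bar x)$ and $I^{\epsilon}_{low}(x^k)=I_{low}(\bar x)$: the perturbed index sets are eventually constant and coincide with the exact index sets evaluated at the limit point, which --- together with the inclusions $I^{\epsilon}_{up}(x^k)\subseteq I_{up}(x^k)$ and $I^{\epsilon}_{low}(x^k)\subseteq I_{low}(x^k)$ recalled just before the statement --- yields the asserted identification for $k\ge\bar k$. I expect the only delicate point to be the uniform choice of $\bar\epsilon$: the separation at the ``interior'' components $0<\bar x_r<C$ must be made strict and uniform in $r$ (hence the factor $\tfrac12$ and the minimum over the finitely many such components), while one checks that components lying on a face of the box impose no further restriction beyond $\bar\epsilon\le C/2$; the remaining work is the routine case analysis above.
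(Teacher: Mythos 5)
The paper itself offers no proof of this proposition (it is imported from \cite{phdtesi:risi}), so your argument has to stand on its own. Your choice of $\bar\epsilon$ and the three-way case analysis are fine, but note what they actually prove: for all $k\ge\bar k$ the perturbed sets lock onto the index sets \emph{at the limit point}, i.e.\ $I^{\epsilon}_{up}(\xbold^k)=I_{up}(\bar\xbold)$ and $I^{\epsilon}_{low}(\xbold^k)=I_{low}(\bar\xbold)$. The last sentence of your proof, where you pass from this to the stated identity $I^{\epsilon}_{up}(\xbold^k)\equiv I_{up}(\xbold^k)$ by invoking the inclusion $I^{\epsilon}_{up}(\xbold^k)\subseteq I_{up}(\xbold^k)$, is a non sequitur: combining the two facts only gives $I_{up}(\bar\xbold)\subseteq I_{up}(\xbold^k)$, not that the inclusion of the $\epsilon$-set in the exact set at $\xbold^k$ is an equality.

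Moreover, this last step cannot be repaired, because the identity with the exact sets at $\xbold^k$, as literally stated, fails for a general convergent feasible sequence. Take $n=2$, $\ybold=(1,-1)\trt$, $C=1$, and $\xbold^k=(1/k,1/k)\trt$, which is feasible and converges to $\bar\xbold=\mathbf{0}$. For every $\epsilon>0$ and every $k>1/\epsilon$ we have $2\in I_{up}(\xbold^k)$ (since $x^k_2>0$ and $y_2=-1$) but $2\notin I^{\epsilon}_{up}(\xbold^k)$ (since $x^k_2<\epsilon$), so no index $\bar k$ can exist, whatever $\bar\epsilon$ is. What your case analysis does establish --- eventual coincidence of $I^{\epsilon}_{up}(\xbold^k)$, $I^{\epsilon}_{low}(\xbold^k)$ with $I_{up}(\bar\xbold)$, $I_{low}(\bar\xbold)$, hence their eventual constancy and the inclusions $I_{up}(\bar\xbold)\subseteq I_{up}(\xbold^k)$, $I_{low}(\bar\xbold)\subseteq I_{low}(\xbold^k)$ --- is the correct content one can extract in this generality; equality with the exact sets at $\xbold^k$ would require additional hypotheses (for instance that components of $\xbold^k$ converging to a bound are eventually exactly at that bound, as happens for some decomposition schemes), which are not assumed here. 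So either state and prove the result in the form you actually obtained, or flag explicitly that the statement as quoted needs such an amendment; do not present the final identification as a consequence of your case analysis.
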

This proposition allows to state that for $k$ sufficiently large and $\epsilon$ sufficiently small
using index sets $ I^{\epsilon}_{up}(\xbold)$ and $ I^{\epsilon}_{low}(\xbold)$ is equivalent to use the exact ones $I_{up}$ and $I_{low}$ and
 we have that
$$ m^{\epsilon}(\xbold)= m(\xbold)\qquad \mbox{and}\qquad M^{\epsilon}(\xbold)= M(\xbold),$$
so that, for any $\epsilon\in (0,\bar \epsilon]$ and $k\ge \bar k$,
\eqref{optimality real} reduces to the concept of $\eta$-optimal solution introduced in \cite{keerthigilbert:02}.
However this is not true far from a solution and/or for a wrong value of $\epsilon$, being $\bar \epsilon$ unknown. Reducing $\epsilon$ to the machine precision $\epsilon_{\rm mach}$ is the best that we can do in a numerical implementation, so that one can argue that for $\epsilon = \epsilon_{\rm mach}$ if $ I^{\epsilon}_{up}(\xbold)=\emptyset$ or $ I^{\epsilon}_{low}(\xbold)=\emptyset$, a solution has been reached within the possible tolerance.

Given a point $\x^k \in \cal F$, we consider the MVP $\epsilon-$step $ S^{\epsilon}_{\texttt{MVP}}(\xbold^k)$ obtained by using
$I^{\epsilon}_{up}(\xbold^k)$ and $I^{\epsilon}_{low}(\xbold^k)$  instead of $I_{up}(\x^k)$ and $I_{low}(\x^k)$. As  a consequence of the definition itself, for any MVP $\epsilon-$direction  $\d^k_{\texttt{MVP},\epsilon}$ we get that the feasible $\epsilon-$stepsize $\bar\beta_{\epsilon}^k$ defined as in \eqref{eq:t} remains bounded from zero by $\epsilon$.

It is easy to see that all results stated so far for Algorithm \ref{al: svm decomposition} are still valid if we consider the $\epsilon-$definition $S^{\epsilon}_{\texttt{MVP}}(\xbold^k)$ rather than $S_{\texttt{MVP}}(\xbold^k)$. Furthemore we have the following result, that fill the gap of convergence.
\begin{theorem}\label{th:numerical convergence}
 Let $\epsilon>0$ and $\eta>0$ be given.
 Let $\{\x^k\}$ be a sequence of feasible points such that $ I^{\epsilon}_{up}(\x^k) \neq \emptyset$, $ I^{\epsilon}_{low}(\x^k) \neq \emptyset$ and $$\lim_{k \to \infty}  S^{\epsilon}_{\texttt{MVP}}(\xbold^k)=0.$$ Then $\bar k>0$ exists such that, for all $k\ge \bar k$, $\x^k$ satisfies \eqref{optimality real}.
\end{theorem}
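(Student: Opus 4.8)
The plan is to upgrade Proposition \ref{pr: M=0} to a quantitative estimate: I would show that whenever the $\epsilon$-optimality condition \eqref{optimality real} fails at $\xbold^k$, the most violating $\epsilon$-step $S^{\epsilon}_{\texttt{MVP}}(\xbold^k)$ is bounded below by a positive constant depending only on $\eta$, $\epsilon$ and $\lambda_{\max}^{\Qbold}$; since $S^{\epsilon}_{\texttt{MVP}}(\xbold^k)\to 0$, this can then happen only for finitely many indices $k$, which is exactly the thesis.

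Concretely, I would fix $k$ and use the hypotheses $I^{\epsilon}_{up}(\xbold^k)\neq\emptyset$, $I^{\epsilon}_{low}(\xbold^k)\neq\emptyset$ to pick an $\epsilon$-MVP pair $(i,j)$, namely $i\in\arg\max_{h\in I^{\epsilon}_{up}(\xbold^k)}-\nabla f(\xbold^k)_h/y_h$ and $j\in\arg\min_{h\in I^{\epsilon}_{low}(\xbold^k)}-\nabla f(\xbold^k)_h/y_h$, together with its direction $\d^{(ij)}$ as in \eqref{smo_generic}. By definition $S^{\epsilon}_{\texttt{MVP}}(\xbold^k)=|\alpha^k_{\texttt{MVP},\epsilon}|\sqrt{2}$, where $\alpha^k_{\texttt{MVP},\epsilon}$ is the exact stepsize \eqref{eq:alfa_smo} for this pair. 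Using $y_r\in\{-1,1\}$, hence $1/y_r=y_r$, the first term of the minimum in \eqref{eq:alfa_smo} has numerator $-\nabla f(\xbold^k)\trt\d^{(ij)}=m^{\epsilon}(\xbold^k)-M^{\epsilon}(\xbold^k)$ and denominator $(\d^{(ij)})\trt\Qbold\,\d^{(ij)}\le\lambda_{\max}^{\Qbold}\|\d^{(ij)}\|^{2}=2\lambda_{\max}^{\Qbold}$; moreover, as recalled just before the statement, $\bar\beta^{k}_{\epsilon}\ge\epsilon$.

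Then I would argue as follows. If $m^{\epsilon}(\xbold^k)>M^{\epsilon}(\xbold^k)+\eta$, the first argument of the minimum in \eqref{eq:alfa_smo} is at least $\eta/(2\lambda_{\max}^{\Qbold})$ (read as $+\infty$ when $\lambda_{\max}^{\Qbold}=0$, i.e. when that $2\times2$ block is singular and $f$ is linear along $\d^{(ij)}$, so that the minimum is simply $\bar\beta^{k}_{\epsilon}$), hence $\alpha^k_{\texttt{MVP},\epsilon}\ge c:=\min\{\eta/(2\lambda_{\max}^{\Qbold}),\,\epsilon\}>0$ and therefore $S^{\epsilon}_{\texttt{MVP}}(\xbold^k)\ge c\sqrt{2}$, with $c$ independent of $k$. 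Taking the contrapositive, $S^{\epsilon}_{\texttt{MVP}}(\xbold^k)<c\sqrt{2}$ forces \eqref{optimality real} to hold at $\xbold^k$; since $S^{\epsilon}_{\texttt{MVP}}(\xbold^k)\to 0$, there is $\bar k$ with $S^{\epsilon}_{\texttt{MVP}}(\xbold^k)<c\sqrt{2}$ for all $k\ge\bar k$, which gives the claim.

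I do not expect a genuine obstacle here; the argument is essentially the quantitative counterpart of Proposition \ref{pr: M=0}. The points that require the most care are: checking that \eqref{eq:alfa_smo} applies verbatim when the MVP is computed through the $\epsilon$-sets (it does, since the $\epsilon$-MVP direction still has the form \eqref{smo_generic}); avoiding a division by a possibly vanishing $2\times2$ principal block of $\Qbold$, which is why I bound the denominator from above by $2\lambda_{\max}^{\Qbold}$ and the feasible step from below by $\epsilon$ rather than inverting the curvature term directly; and the elementary identity $-\nabla f(\xbold^k)_r/y_r=-y_r\nabla f(\xbold^k)_r$ that identifies the numerator with $m^{\epsilon}(\xbold^k)-M^{\epsilon}(\xbold^k)$.
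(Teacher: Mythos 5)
Your proof is correct and follows essentially the same route as the paper's: both exploit $S^{\epsilon}_{\texttt{MVP}}(\xbold^k)=\sqrt{2}\,|\alpha^k_{\texttt{MVP},\epsilon}|$, the lower bound $\bar\beta^k_{\epsilon}\ge\epsilon$, and the bounded curvature along $\d^{(ij)}$ to conclude that a violation $m^{\epsilon}-M^{\epsilon}>\eta$ would keep the step bounded away from zero, contradicting $S^{\epsilon}_{\texttt{MVP}}(\xbold^k)\to 0$. You merely make the paper's terse argument quantitative via the explicit constant $\min\{\eta/(2\lambda_{\max}^{\Qbold}),\epsilon\}$, which is the same idea spelled out.
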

\begin{proof}
By definition of $S^{\epsilon}_{\texttt{MVP}}(\xbold^k)$ we get
 \begin{equation}\label{eq: bar alpha to 0}
  0=\lim_{k \to \infty}  S^{\epsilon}_{\texttt{MVP}}(\xbold^k)=\sqrt{2}\lim_{k \to \infty} |\alpha^{k}_{\texttt{MVP},\epsilon}|.
 \end{equation}
 Since by construction $\bar\beta_{\epsilon}^k\ge \epsilon$, by \eqref{eq:alfa_ott}
  we get that \eqref{eq: bar alpha to 0} implies that $\bar k>0$ exists such that, for all $k\ge\bar k$, we have $-\nabla \fbold(\x^k)\trt \d_{\texttt{MVP},\epsilon}^k \le \eta$, which implies \eqref{optimality real}. \QEDA
\end{proof}

\section{Practical Algorithmic Realizations}\label{sec:implementation}
Algorithm \ref{al: svm decomposition} includes a vast
amount of specific strategies that may vary according to several implementation
choices. Various alternative may be related to
the blocks dimension, the blocks composition, the blocks selection, the way
to enforce convergence conditions and the methods used to solve the subproblems. Different algorithms can be designed exploiting these degrees of freedom.
In this section we discuss about some possible alternatives and we
suggest some practical implementations of Algorithm \ref{al: svm decomposition}.

The dimension of the blocks is a key factor
for the training performances.
It influences the way in which the subproblems
can be solved so it should be carefully determined according
to the dataset nature.
Mainly we can consider two opposite strategies:
blocks of minimal dimension (i.e. SMO-type methods) and higher dimensional blocks.
In SMO-type methods we can take advantage of the fact that, for each block, the subproblem can be solved
analytically. On the other
hand each SMO-block may yield a small decrease of the objective
function and slow identification of the support vectors, so that to get fast convergence the simultaneous optimization
of a great number of SMO-blocks may be needed
when dealing with high dimensional instances.
This choice could be well suited for an architecture composed of a great amount
of simple processing units, like that of the recent Graphic
Processing Unit (GPU).
Higher dimensional blocks require the solution of the subproblems by means of some optimization procedure hence the solution of each block may need greater time consumption. On the other hand the decrease of the objective function and the identification of the support vectors may be faster so that less iterations should be needed.
Hence this choice may be suitable  whenever
powerful but, generally, not many processing units are available.
Algorithm \ref{al: svm decomposition} encompasses also the possibility of
considering huge blocks assigned at each processor and using a decomposition method to solve the corresponding huge subproblems. This strategy essentially consists in iteratively splitting the original SVM instance into smaller SVMs, distributing them to available parallel processes and then gathering their solutions points in order to properly define the new iterate.

An efficient rule to partition the variables into blocks is
crucial for the rate of convergence of the algorithm.
A lot of heurisitc methods (see e.g. \cite{SVMlight,SVMlightconvergence,Zoutendijk}) have been studied to
obtain efficient rules for constructing subproblems that
can guarantee a fast decrease of the objective function (e.g. by determining the most violating pair).
Such methods usaully make use of first order information, implying that
the gradient should be partially or entirely updated at each iteration, and
this could be overwhelming in a huge dimensional framework.
However practical implementations with a (partially) random
composition of the blocks could be considered.

Once we have determined a blocks partition of the whole set of variables,
only a subset of the resulting subproblems may, in general, be
involved in the optimization process. Indeed, we may further restrict
the blocks used to update the current iterate by determining a subset ${\cal J}^k$.
We need to keep in mind that
the main computational burden is due to the gradient update.
Indeed at each iteration, the gradient update must be performed by computing
 the columns of $\Qbold$ related only to those variables that are chosen to be in the selected blocks ${\cal J}^k$.
 Hence the choice of  ${\cal J}^k$ may take into account both the decrease of the objective function and the computational effort for updating the gradient.
A minimal threshold on the percentage of objective function decrease
associated to each block, with respect to the cumulative decrease of
every block, could be a possible discriminant for a blocks selection rule
in order to avoid useless computations.

The practical effectiveness of the algorithm is highly related to the way to enforce convergence conditions stated so far.
If, for example, we do not take into account the rule of taking at least one descent block every $L$ iterations, we could take at each iteration the same partition of variables. This short-sighted strategy would be totally ineffective, since, in this case, the algorithm would lead only to an equilibrium of the generalized Nash equilibrium problem (see e.g. \cite{fks,fk,fs}) in which the players solve the fixed subproblems, but not to a solution of the original SVM, see the following example.
\begin{example}
Let us consider a dual problem with four variables and in which $Q=I$, $\ybold = (1 \; 1 \; -1 \; -1)\trt$ and $C=1$. The unique solution of this problem is $\xbold^* = (1 \; 1 \; 1 \; 1)\trt$. Let $\xbold^k = (0 \; 0 \; 0 \; 0)\trt$ and suppose to consider a partition of two blocks: ${\cal P} = \{(1,2),(3,4)\}$. Then the best responses are $\widehat \xbold^k_{(1,2)} = (0 \; 0)\trt$ and $\widehat \xbold^k_{(3,4)} = (0 \; 0)\trt$, but this implies that $\xbold^{k+1} = \xbold^{k}$. Therefore if we do not modify ${\cal P}$ we will never move from the origin, and then will never converge to $\xbold^*$. However, being the origin a fixed point for the best responses of the two processes, it is, by definition, a Nash equilibrium for the game involving these two players.
\end{example}

Regarding the choice of the steplenght $\alpha^k$ we can use two different strategies: a linesearch or a diminishing stepsize rule. In the first case, as showed before, we can use the exact minimization formula \eqref{eq: exact}. In the second case a simple rule  could be
$\alpha^{k}=\frac{1}{k^\xi}$,
with $\xi \in (0,1]$, but different choices are also possible.
Although preliminary tests
showed that the exact minimization is more effective than any other choice, the diminishing
stepsize strategy, besides being easy to implement, requires much less computations and this could be of great practical interest
for high dimensional instances (training set with many samples and many dense features).

As mentioned above, when the dimension of the blocks
is more than two, an optimization algorithm is needed
to obtain a solution of the subproblems.
Due to its particular structure (convex quadratic
objective function over a polyhedron), a lot
of exact or approximate methods can be applied for the
solution of problem \eqref{eq: problem i-th proximal}. We simply point out that, whenever the dimension
of the blocks is so big that each block can be considered
a sort of smaller SVM, a slight modification of any efficient software
for the sequential training of SVMs (like LIBSVM)
could be used to perform a single optimization step.

As a matter of example, we propose a SMO-type parallel scheme derived from Algorithm \ref{al: svm decomposition} for which we developed two matlab prototypes.
This realization, that we call PARSMO, is based on using a partition ${\cal P}$ of minimal dimension blocks thus performing multiple SMO steps simultaneously in order to build the search direction $\d^k$. Each SMO step is assigned to a parallel process that analytically solves a two-dimensional subproblem. Thus the computational effort of each processor is very light and communications must be very fast thus being suitable for a multicore environment.
%\todo{eliminerei "if not available in the cache": \'e ovvio dai commenti}
%%%%%%%%%%%%%%%%%%%%%%%%%%
\begin{center}
\framebox{\setlength{\miniwidthb}{\textwidth}
    \addtolength{\miniwidthb}{-4pc}
    \begin{minipage}{\miniwidthb}
 \begin{algorithm}\label{al: parallel smo}{\rm
     \begin{center}
        {\bf PARSMO}
      \end{center}

\begin{description}
      \item[{\bf Initialization}]  Set $\xbold^0 = 0$, $\nabla \fbold^0 = -\ebold$, $q \ge 1$, $\epsilon>0$, $\eta>0$ and $k=0$.\\
Select
$$(i_1,j_1)\in I_{up}^{\epsilon,MVP}(\xbold^k)\times
I_{low}^{\epsilon,MVP}(\xbold^k).$$

\item[{\bf Do while}]$\left(-{\nabla\fbold^k_{i_1}}{y_{i_1}}+{\nabla\fbold^k_{j_1}}{y_{j_1}} \ge \eta\right)$
\begin{description}
 \item[\bf{S.1} (Blocks definition)]\
\par\smallskip\noindent
 Choose $(q-1)$ pairs $\{(i_2,j_2),(i_3,j_3),\dots,(i_q,i_q)\}$.\\
 Set ${\cal J}^k = \{(i_1,j_1),(i_2,j_2),\dots,(i_q,i_q)\}$.
 \item[\bf{S.2} (Parallel computation)]\
 \par\smallskip\noindent
 %Set $\dbold^k=0$.\\
 For each pair $(i_h,j_h) \in {\cal J}^k$ compute in parallel:
 \begin{enumerate}
  \item kernel columns $\Qbold_{* i_h}$ and $\Qbold_{* j_h}$ (if not available in the cache),
   \item $t_h \d^{(i_hj_h)}$ with $\d^{(i_hj_h)}$ defined as in \eqref{smo_generic} and $t_h$ as in \eqref{eq:alfa_smo}.
\end{enumerate}
\par\smallskip\noindent
\item [\bf{S.3} (Direction)] $\displaystyle \d^k=\sum_{(i_h,j_h)\in {\cal J}^k } t_h\d^{(i_hj_h)}$
\par\smallskip\noindent
\item[\bf{S.4} (Stepsize)]
%\par\smallskip\noindent
Compute the steplenght $\alpha^k$ as in \eqref{eq: exact}.
 \item[\bf{S.5}  (Update)]\
\par\smallskip\noindent
Set $\xbold^{k+1} = \xbold^k + \alpha^k \dbold^k$.\\
Set $\displaystyle\nabla \fbold^{k+1} = \nabla \fbold^k + \alpha^k\sum_{h=1}^q t_h \left(d^k_{i_h} \Qbold_{* i_h}  + d^k_{j_h}\Qbold_{* j_h} \right)$.\\
Set $k = k + 1$.\\
Select
$$(i_1,j_1)\in I_{up}^{\epsilon,MVP}(\xbold^k)\times
I_{low}^{\epsilon,MVP}(\xbold^k).$$
\end{description}
\item[{\bf End While}]
\item[{\bf Return}] $\x^k$.
\end{description}}\end{algorithm}
\end{minipage}}
\end{center}
%%%%%%%%%%%%%%%%%%%%%%%%%%%%%%%%%%%%

\noindent
Starting from the feasible null vector $\xbold^0 = 0$, which is a well known suitable choice for SVM training algorithms beacuse allows to  initialize the gradient  $\nabla \fbold^0$ to $-\ebold$, the algorithm selects at each iteration the  most violating pair $(i_1,j_1)\in  I^{\epsilon,MVP}_{up}(\xbold^k) \times I^{\epsilon,MVP}_{low}(\xbold^k)$ and further $(q-1)$ pairs that all together make up ${\cal J}^k$.
Search direction $\dbold^k$ at {\bf S.3} is obtained by analytically computing stepsise $t_h$ for  the $q$ subproblems of type \eqref{eq: problem i-th proximal} related to the pairs in ${\cal J}^k$. Direction $\d^k$ is simply computed by summing up all the SMO steps; it has $\|\d^k\|_0=2q$ with $q\ge 1$ which depends on the number of parallel processes that we want to activate.
 %The set ${\cal J}^k=\{(i_h,j_h)\}_{h=1,\dots q}$ is selected at each iteration $k$ and the direction is obtained as $$\d^k=\sum_{(i_h,j_h)\in {\cal J}^k } t_h\d^{(i_hj_h)}$$ where $\d^{(i_hj_h)}$ is defined as in \eqref{smo_generic} and $t_h$ is the optimal stepsize along $\d^{(i_hj_h)}$ defined as in \eqref{eq:alfa_smo}.
Finally at Step 4 steplenght $\alpha^k$ that exactly minimizes the objective function along $\dbold^k$ is obtained by \eqref{eq: exact}; note that this step requires no further kernel evaluations. The same holds for the gradient update.
Of course as standard in SVM algorithms, a caching strategy can be exploited to limit the computational burden due to the evaluation of kernel columns.

In PARSMO it remains to specify how to select pairs forming blocks ${\cal J}^k$. Since the most expensive computational burden is due to the calculation of kernel columns, we want to analyse the impact of a massive use of the cache with respect to a standard one.
Indeed we propose two different matlab implementations of PARSMO that use a cache strategy in two different ways. We would compare the performance with a standard sequential MVP implementation in order to analyze possible advantages of the PARSMO scheme.

 %in order to analyze this aspect and exploit the different impact of parallelism.

In the first implementation, the $q-1$ pairs, in addition to a MVP, are selected by choosing those pairs that most violate the first order optimality condition, like in the SVM$^{\mbox{light}}$ algorithm \cite{SVMlight}.
Hence
we select $q$ pairs $(i_h, j_h)\in I^{\epsilon}_{up}(\xbold^k)\times I^{\epsilon}_{low}(\xbold^k)$
 sequentially so
that
$$- y_{i_1} \nabla f(\xbold^k)_{i_1}\ge - y_{i_2}\nabla
f(\xbold^k)_{i_2}\ge \cdots \ge -y_{i_q}\nabla
f(\xbold^k)_{i_q},$$
and
$$-y_{j_1}\nabla f(\alpha^k)_{j_1}\le -y_{j_2}\nabla
f(\xbold^k)_{j_2}\le \cdots \le -y_{j_q}\nabla
f(\xbold^k)_{j_q}.$$
In this case, although we can use a standard caching strategy, we cannot control the number of kernel columns  evaluations at each iteration that in the worst case can be up to $2q$. The computation of kernel columns $\Qbold_{* i_h}$ and $\Qbold_{* j_h}, \forall h \in \{1,\dots,q\},$  can be performed in parallel by the processors empowered to solve the subproblems. In this case the number of kernel evaluation per iteration would be of course greater than those of a standard MVP, but the overall number of iterations may decrease. Thus we keep the advantages of performing simple analytic optimization, as in SMO methods, whilst moving $2q$ components at the time, as in SVM$^{light}$. We note that  reconstruction of the overall gradient $\nabla \fbold^{k+1}$ can be parallelized among the $q$ processors and requires a synchronization step to take into account stepsize $\alpha^k$. Thus
the CPU-time needed is essentially equivalent to a gradient update of a single SMO step. In this approach the transmission time among processors may be quite significant and this strictly depends on the parallel architecture.
We refer to this implementation as PARSMO-1.

The second implementation selects the $q-1$ pairs, in addition to a MVP $(i_1, j_1)\in I^{\epsilon}_{up}(\xbold^k)\times I^{\epsilon}_{low}(\xbold^k)$, exclusively among the indices of the columns currently available in the cache. To be more precise, let $\cal C$ be the index set of the kernel columns available in the cache. The $q-1$ pairs
 $(i_h, j_h)$  in ${\cal J}^k$  are selected following the same SVM$^{light}$ rule described above for PARSMO-1, but restricted to the index sets ${\cal C}\cap I^{\epsilon}_{up}(\xbold^k)\times {\cal C}\cap I^{\epsilon}_{low}(\xbold^k)$.
 We refer to it as PARSMO-2.
In this case the number of kernel evaluation per iteration is at most two as in a standard MVP implementation. The rationale of this version is to improve the performances of a classical MVP algorithm by using simultaneous multiple SMO optimizations without increasing the amount of kernel evaluations.
%Except for the MVP pair, which can require the computation of the kernel columns  $\Qbold_{* i_1}$ and $\Qbold_{* j_1}$,
% each SMO process computes only the analytical solution of the two-dimensional subproblem, since kernel columns are already available in the cache.

In order to have a flavour of the potentiality of these two parallel strategies, we
 performed some simple matlab experiments for the two versions PARSMO-1 and PARSMO-2.
All experiments have been carried out on a 64-bit intel-Core i7 CPU 870 2.93Ghz $\times$ 8.
Both PARSMO-1 and PARSMO-2 make use of a standard caching strategy, see \cite{LIBSVM}, with a cache memory of $500$ columns.
We perform experiments with $q= 1, 2, 4$ and $8$ parallel processes. Clearly the case  with $q=1$  corresponds to a classical MVP algorithm with a standard caching strategy.
% Experiments reported, both for PARSMO-1 and for PARSMO-2, have been carried out with a cache memory .
It is worth noting that to preserve the good numerical behavior of PARSMO-1 and PARSMO-2, it is necessary the use of the ``gathering'' steplenght $\alpha^k$. In fact, further tests, not reported here, showed that by removing the use of $\alpha^k$ oscillatory and divergence phenomena may occur when using multiple parallel processes. This enforce the practical relevance of our theoretical analysis.

The major aim of the experiments in this preliminary contest is to highlight the benefits of simultaneously moving along multiple SMO directions.
We tested both PARSMO-1 and PARSMO-2 on six benchmark problems available at the LIBSVM site \url{http://www.csie.ntu.edu.tw/~cjlin/libsvmtools/datasets/}, using a standard setting for the parameters ($C=1$,  gaussian parameter $\gamma= 1/\#\mbox{features}$), see Table \ref{tab:dataset}.
\begin{table}[tth]
  \centering
  \begin{tabular}{l|rr|c}
    name & \#features & \#training data& kernel type\\
    \hline
    a9a & 123 & 32561 & gaussian\\
    gisette scale & 5000&6000&linear\\
    cod-rna & 8& 59535&gaussian\\
    real-sim &20958 &72309&linear\\
    rcv1 &47236 &20242&linear\\
    w8a &300& 49749&linear\\
    \hline
  \end{tabular}
  \caption{\label{tab:dataset} Training problems description.}
\end{table}

To evaluate the behavior of the algorithms we consider
the ``relative error'' ($RE$) as
$$ RE=\frac{|f^*-f|}{|f^*|},$$
where  $f^*$ is  the optimal known value of the objective function.
As regards PARSMO-1, for each problem we plot the RE versus
\begin{description}
\item{i)} the number of iterations (see Figure \ref{ReVSI});
\item{ii)}
the number of kernel evaluations per process, which is obtained by dividing the total number of kernel evaluations by the number of parallel processes involved (see Figure \ref{ReVSK}).
\end{description}

 \begin{figure}[!ht]
 \hspace{1.6em}
  \begin{minipage}[c]{.3\textwidth}
    \includegraphics[scale=0.2]{./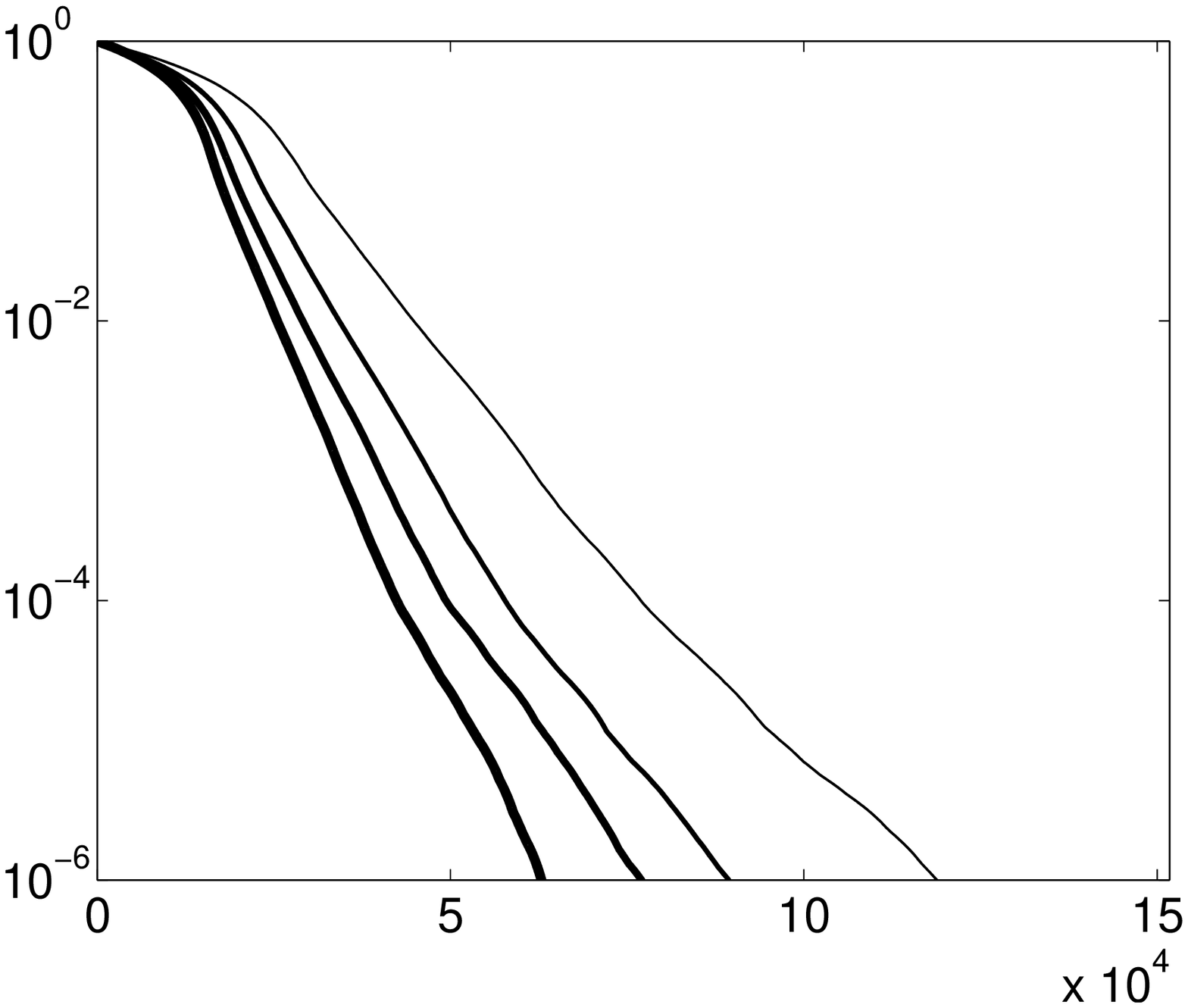}
  \end{minipage}
  \begin{minipage}[c]{.3\textwidth}
    \includegraphics[scale=0.2]{./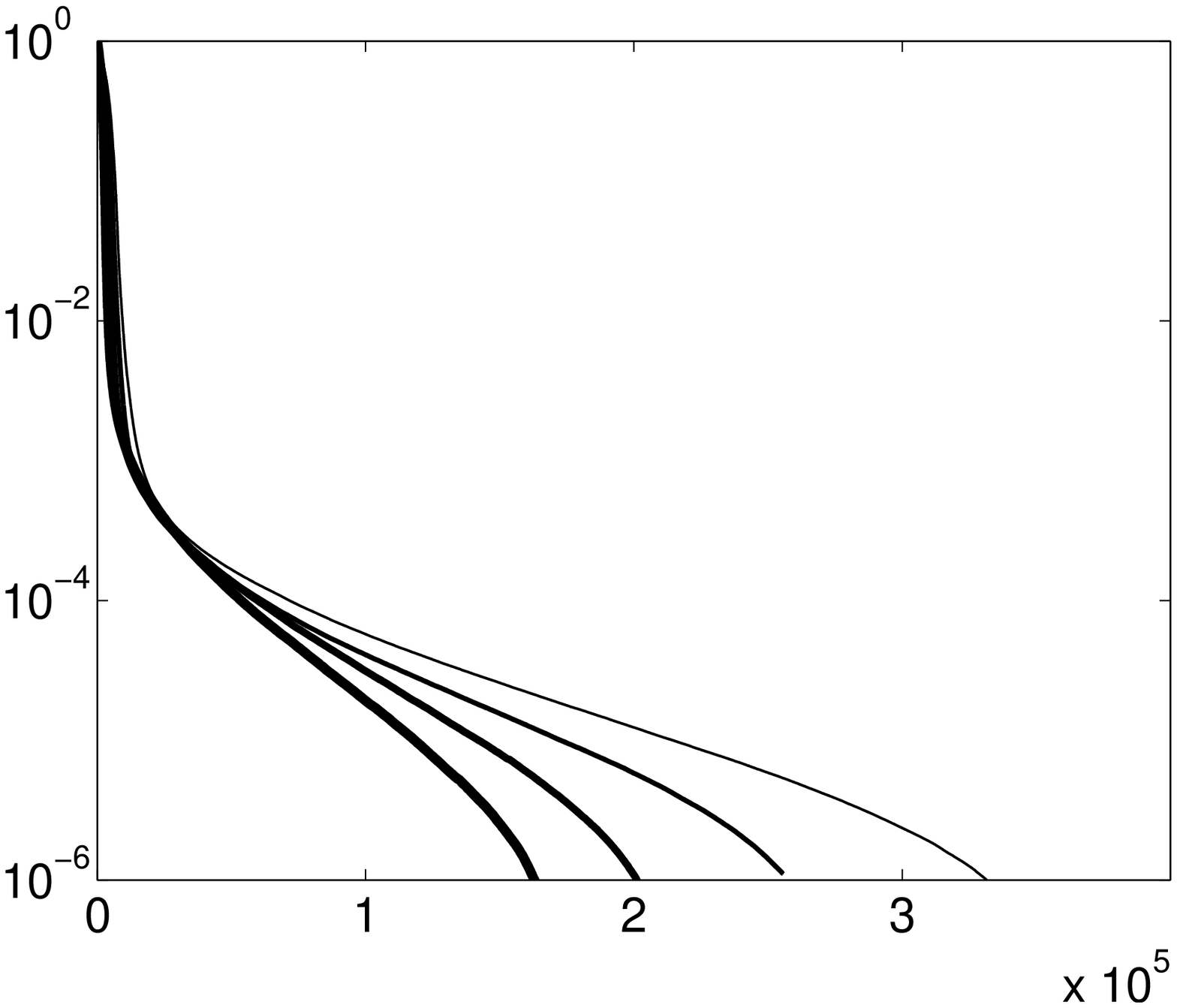}
%    \vspace{2cm}
  \end{minipage}
  \begin{minipage}[c]{.3\textwidth}
    \vspace{-1.6em}
    \includegraphics[scale=0.2]{./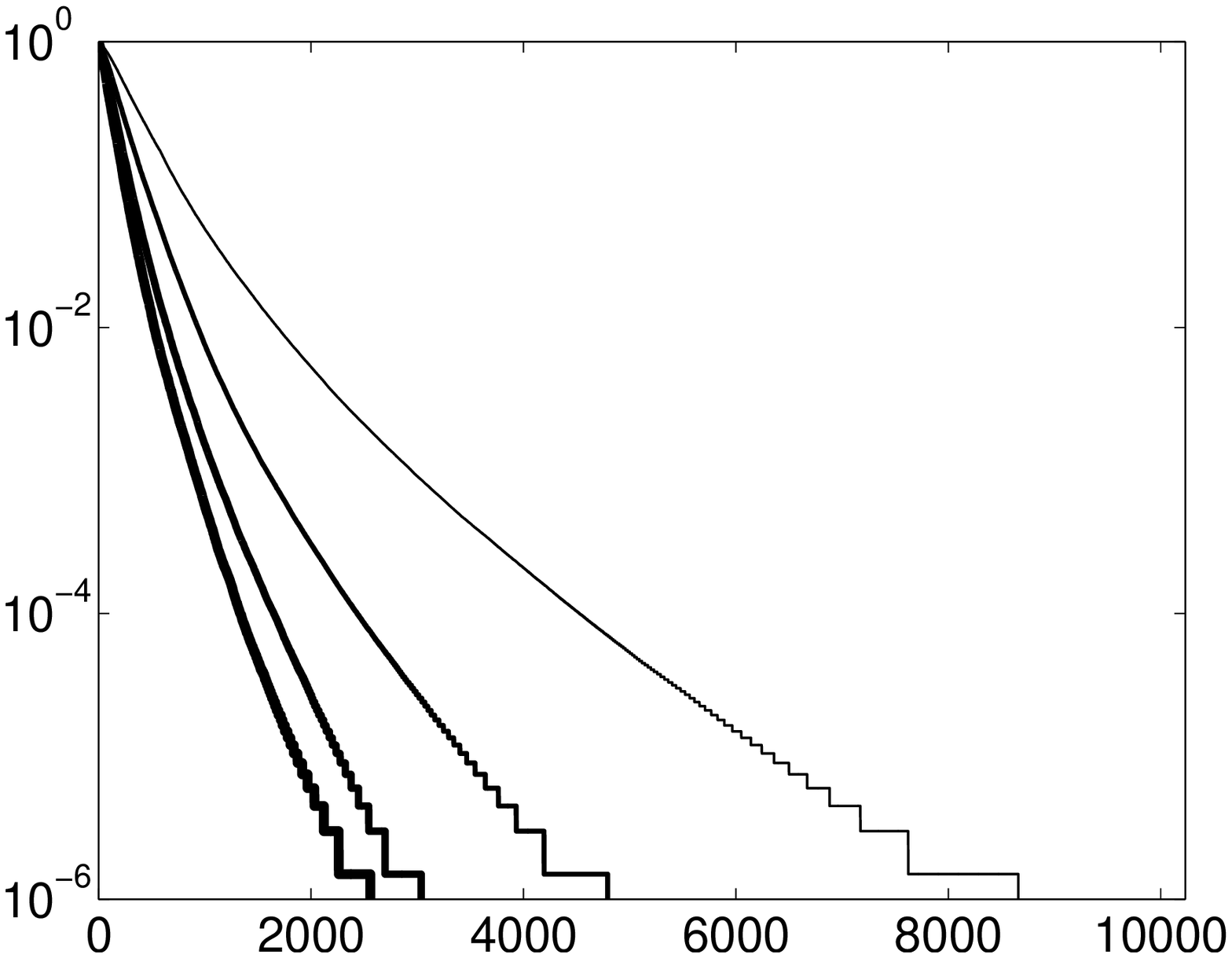}
  \end{minipage}
  \begin{minipage}[c]{.3\textwidth}
  \vspace{1.2cm}
  \begin{center}
   $\qquad$a9a
  \end{center}
  \end{minipage}
  \begin{minipage}[c]{.3\textwidth}
  \vspace{1.2cm}
  \begin{center}
   cod-rna
  \end{center}
  \end{minipage}
  \begin{minipage}[c]{.3\textwidth}
  \vspace{1.25cm}
  %\begin{center}
   $\qquad$gisette scale
  %\end{center}
    \end{minipage}
 \vspace{-0cm}
 \hspace*{1.6em}
  \begin{minipage}[c]{.3\textwidth}
    \vspace{-2em}
    \includegraphics[scale=0.2]{./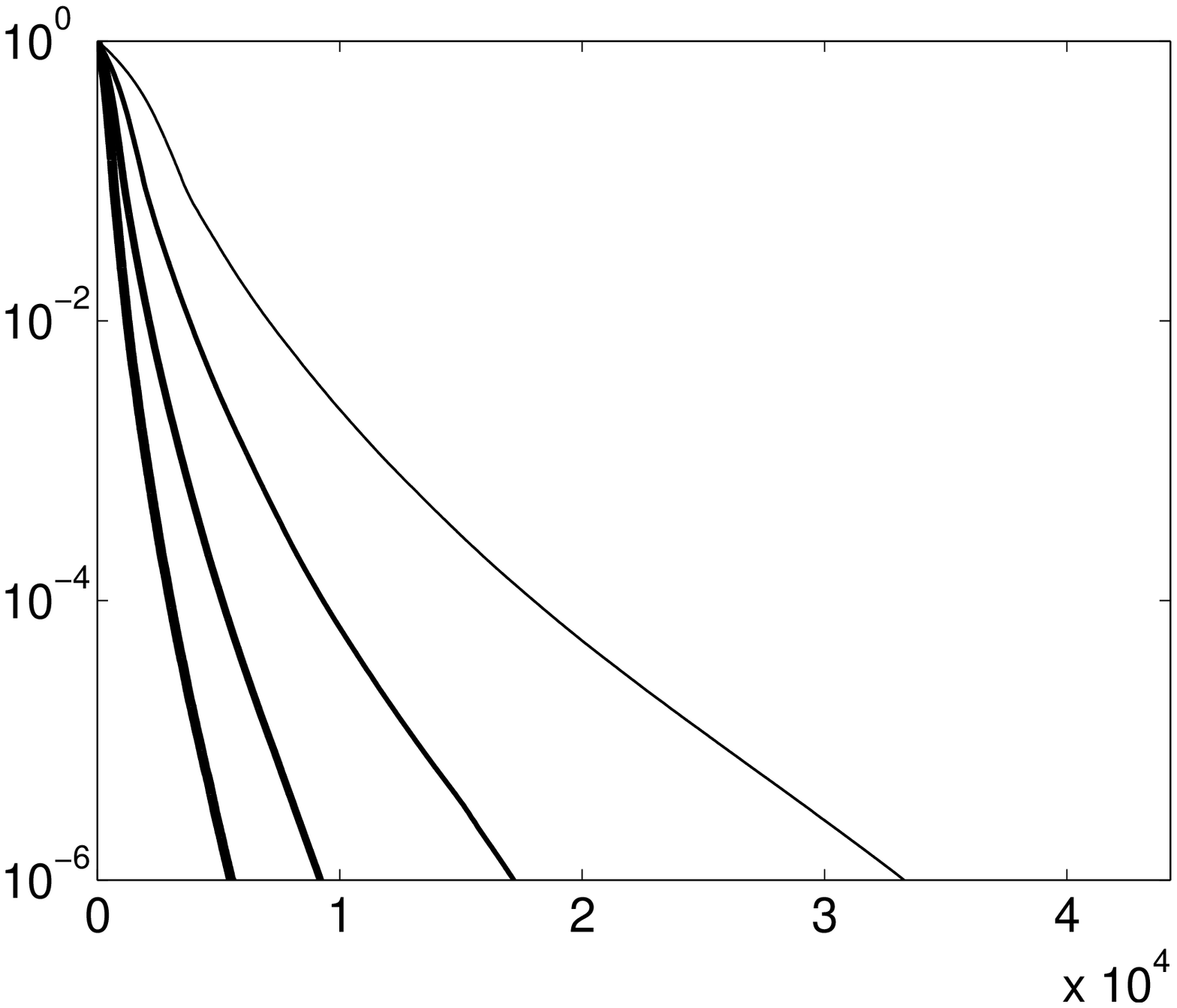}
  \end{minipage}
  \begin{minipage}[c]{.3\textwidth}
    \vspace{-2em}
    \includegraphics[scale=0.2]{./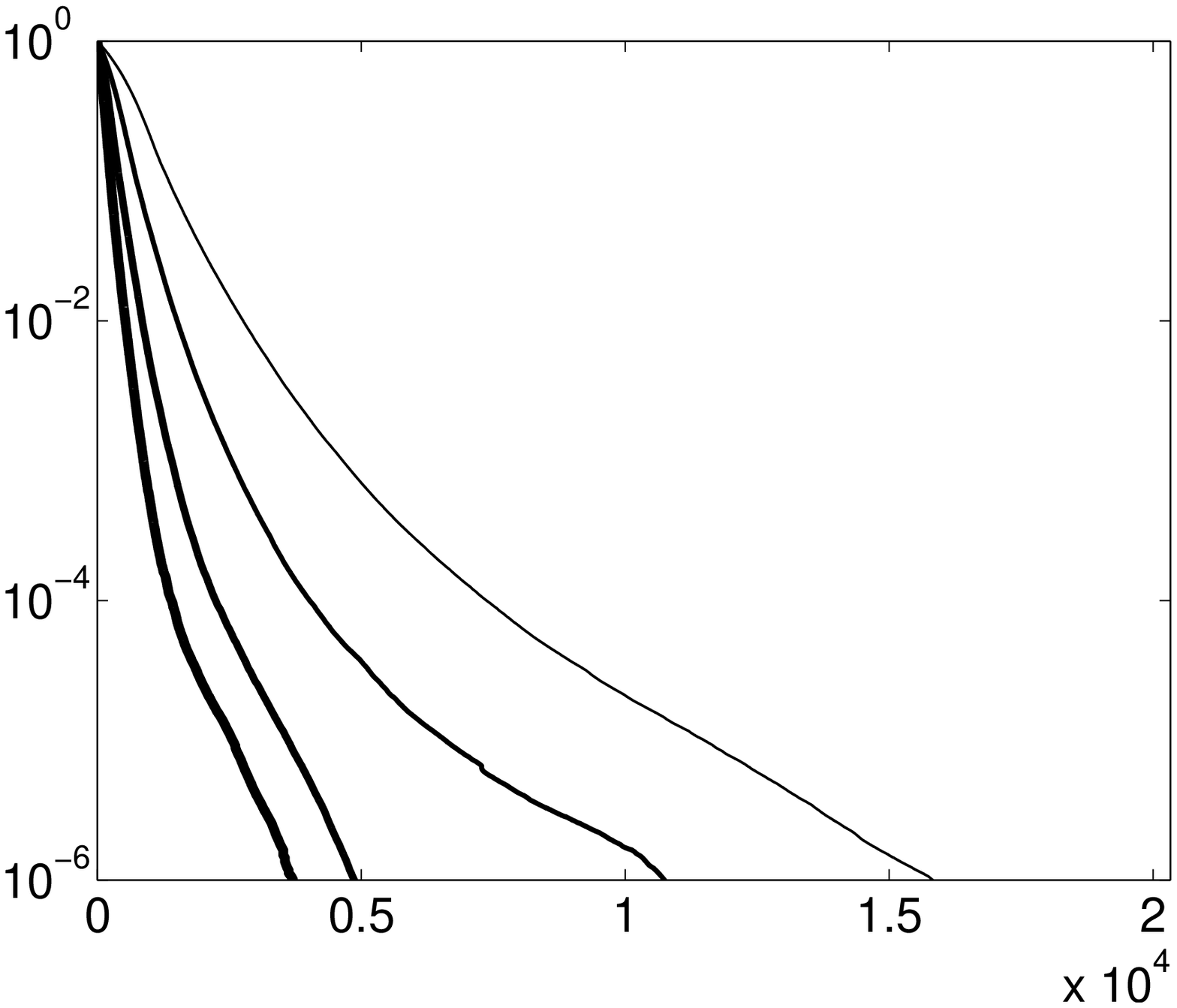}
  \end{minipage}
  \begin{minipage}[c]{.3\textwidth}
    \vspace{-2em}
    \includegraphics[scale=0.2]{./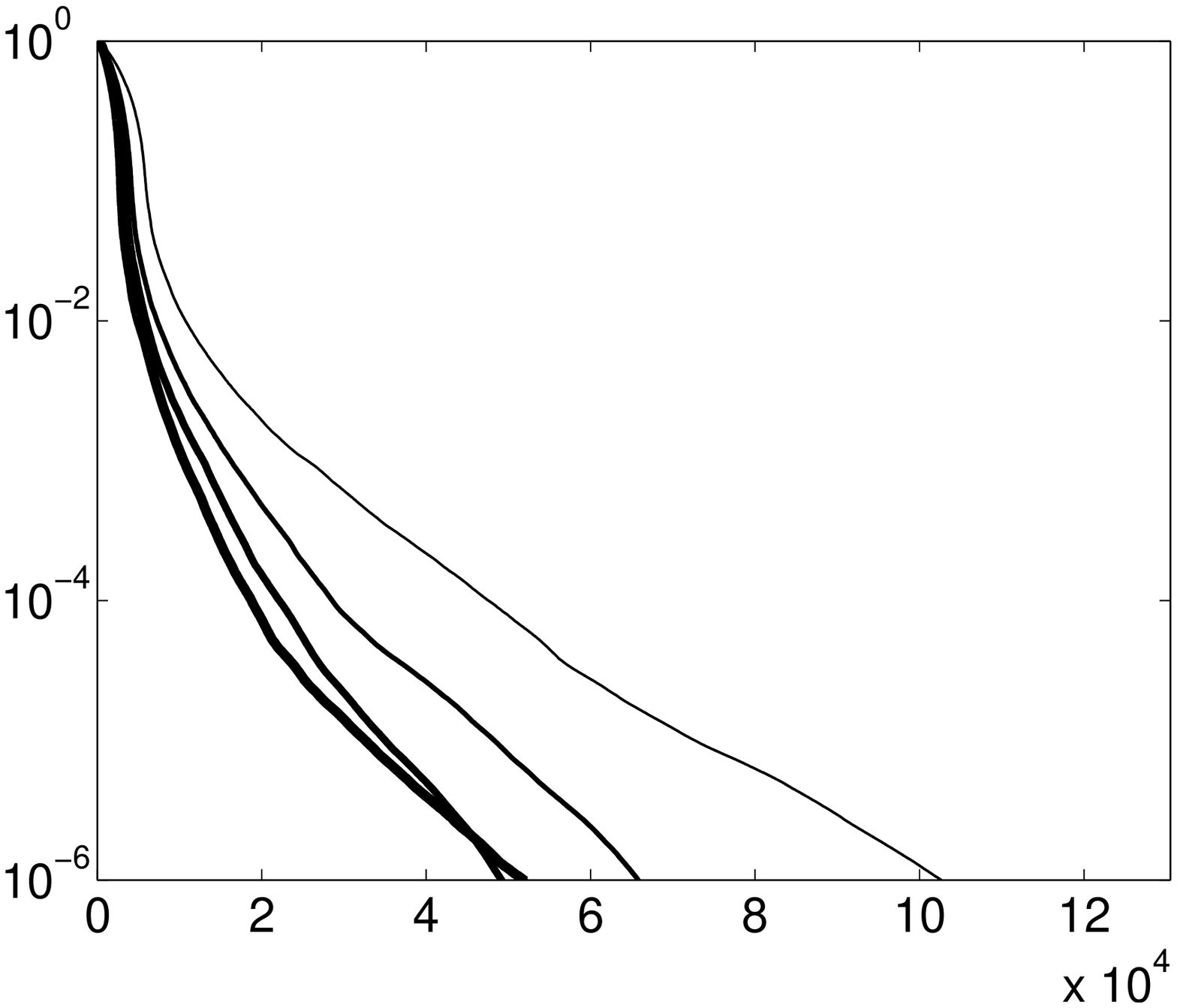}
  \end{minipage}
%    \vspace{0.5cm}
  \begin{minipage}[c]{.3\textwidth}
  \vspace{1.2cm}
  \begin{center}
   $\qquad$real-sim
  \end{center}
  \end{minipage}
  \begin{minipage}[c]{.3\textwidth}
  \vspace{1.2cm}
  \begin{center}
   rcv1
  \end{center}
  \end{minipage}
  \begin{minipage}[c]{.3\textwidth}
  \vspace{1.25cm}
  %\begin{center}
   $\qquad\quad$w8a
  %\end{center}
    \end{minipage}
  \begin{minipage}[c]{1\textwidth}
    \vspace{2em}
	    \setlength{\unitlength}{0.4cm}
\begin{center}
 	    \begin{picture}(28.5,1)(0,0)
	    \put(0,0){\framebox(28.5,1)}
	    \put(2,0.4){\linethickness{0.2mm}\line(1,0){2}}
	    \put(4.5,0.3){$q=1$}
	    \put(9,0.4){\linethickness{0.35mm}\line(1,0){2}}
	     \put(11.5,0.3){$q=2$}
	    \put(16,0.4){\linethickness{0.50mm}\line(1,0){2}}
	    \put(18.5,0.3){$q=4$}
	    \put(23,0.4){\linethickness{0.65mm}\line(1,0){2}}
	    \put(25.5,0.3){$q=8$}
	    \end{picture}
	    \end{center}
  \end{minipage}
  \caption{PARSMO-1: Relative Error versus iterations. \label{ReVSI}}
\end{figure}
\begin{figure}[!ht]
 \hspace{1.6em}
  \begin{minipage}[c]{.3\textwidth}
    \includegraphics[scale=0.2]{./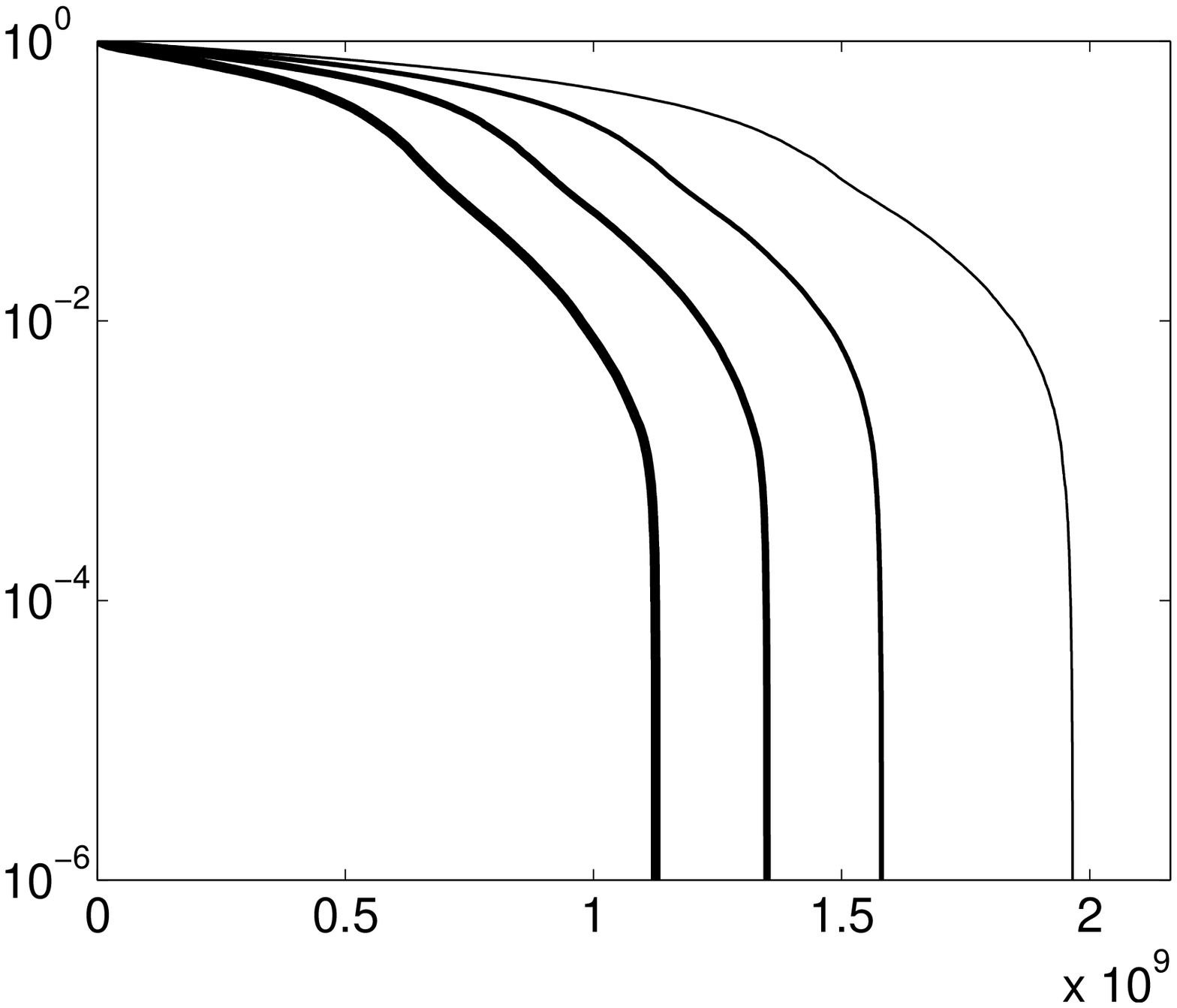}
  \end{minipage}
  \begin{minipage}[c]{.3\textwidth}
    \includegraphics[scale=0.2]{./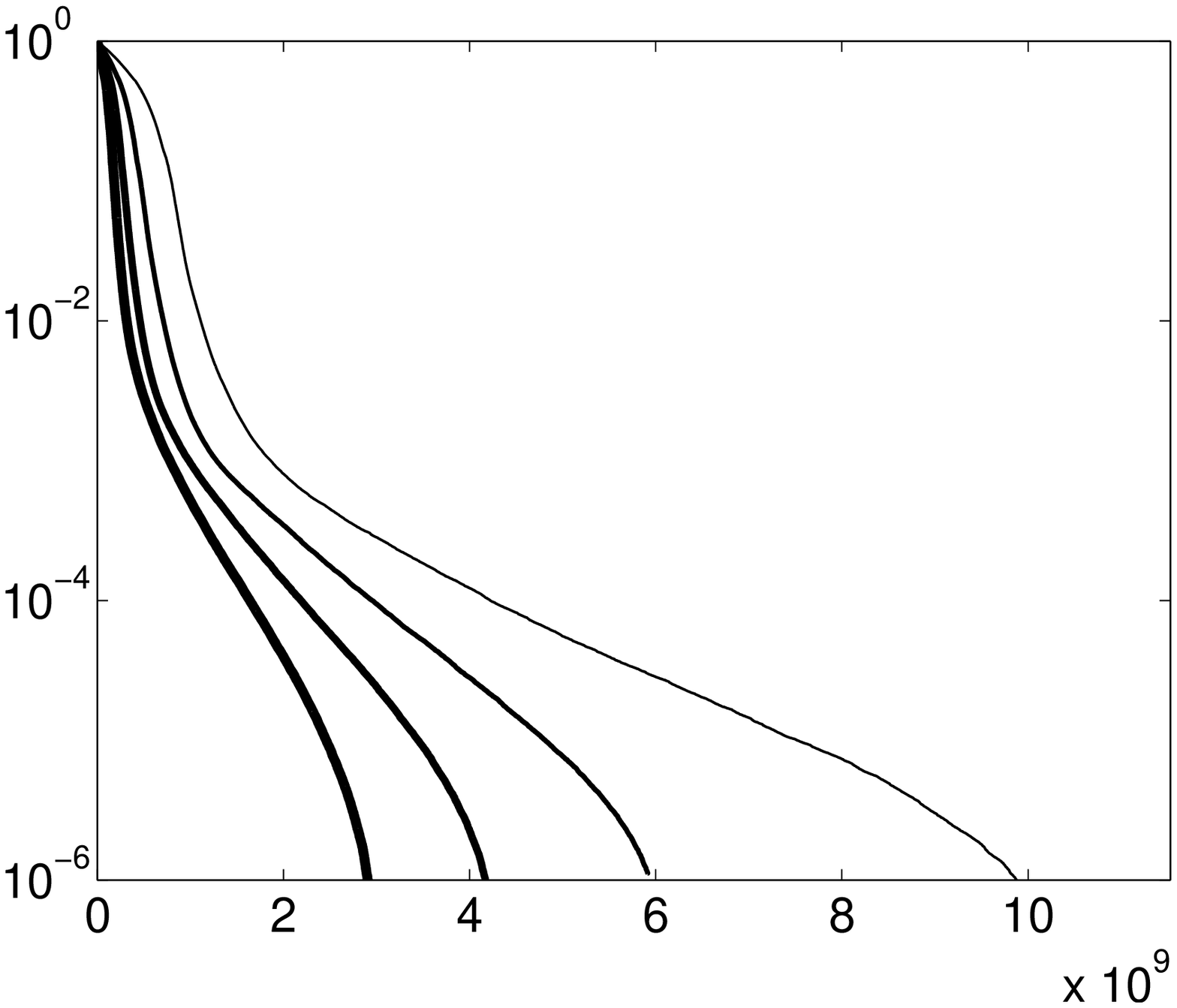}
%    \vspace{2cm}
  \end{minipage}
  \begin{minipage}[c]{.3\textwidth}
%    \vspace{-1.6em}
    \includegraphics[scale=0.2]{./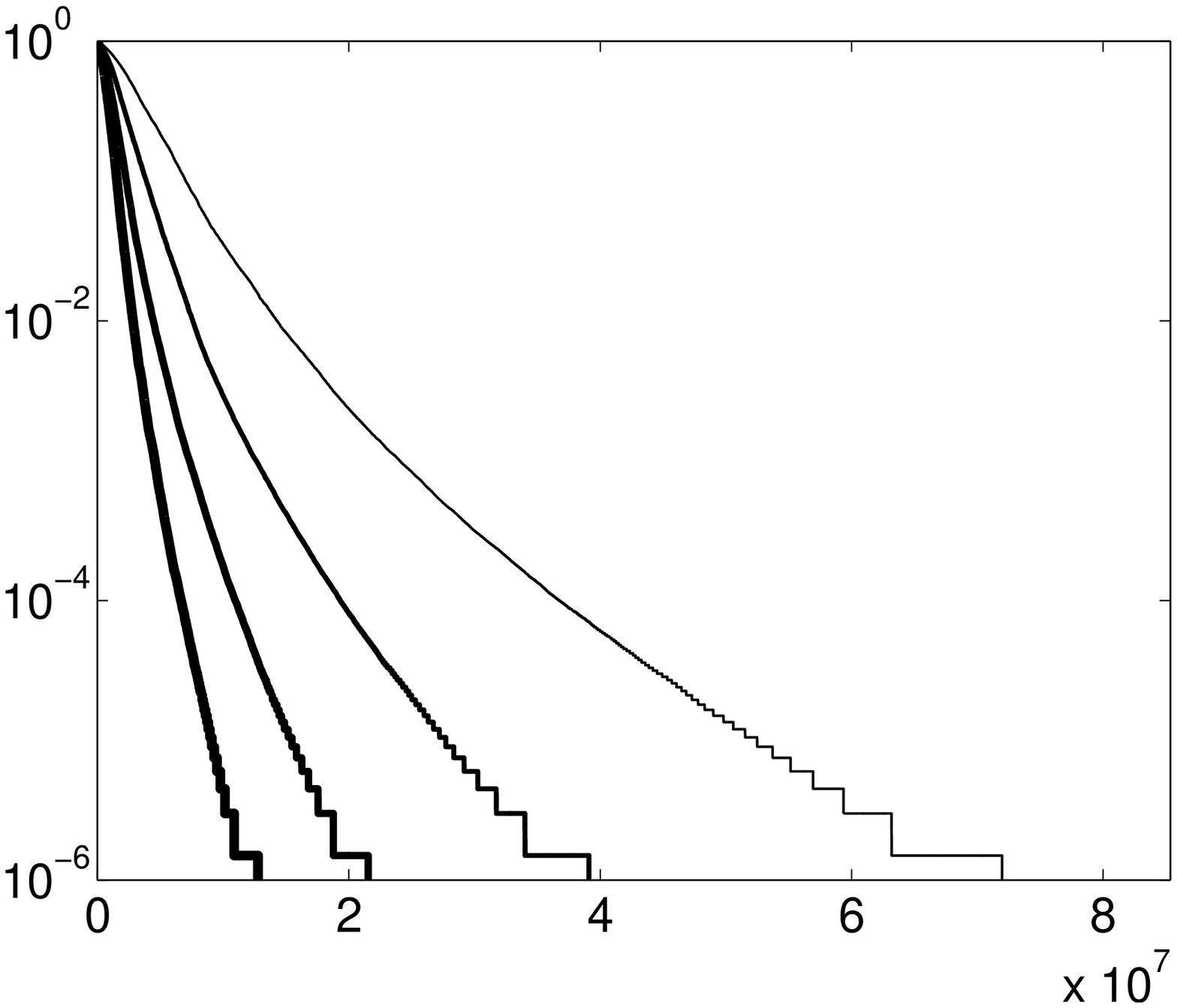}
  \end{minipage}
  \begin{minipage}[c]{.3\textwidth}
  \vspace{1.2cm}
  \begin{center}
   $\qquad$a9a
  \end{center}
  \end{minipage}
  \begin{minipage}[c]{.3\textwidth}
  \vspace{1.2cm}
  \begin{center}
   cod-rna
  \end{center}
  \end{minipage}
  \begin{minipage}[c]{.3\textwidth}
  \vspace{1.25cm}
  %\begin{center}
   $\qquad$gisette scale
  %\end{center}
    \end{minipage}
 \vspace{-0cm}
 \hspace*{1.6em}
  \begin{minipage}[c]{.3\textwidth}
    \vspace{-2em}
    \includegraphics[scale=0.2]{./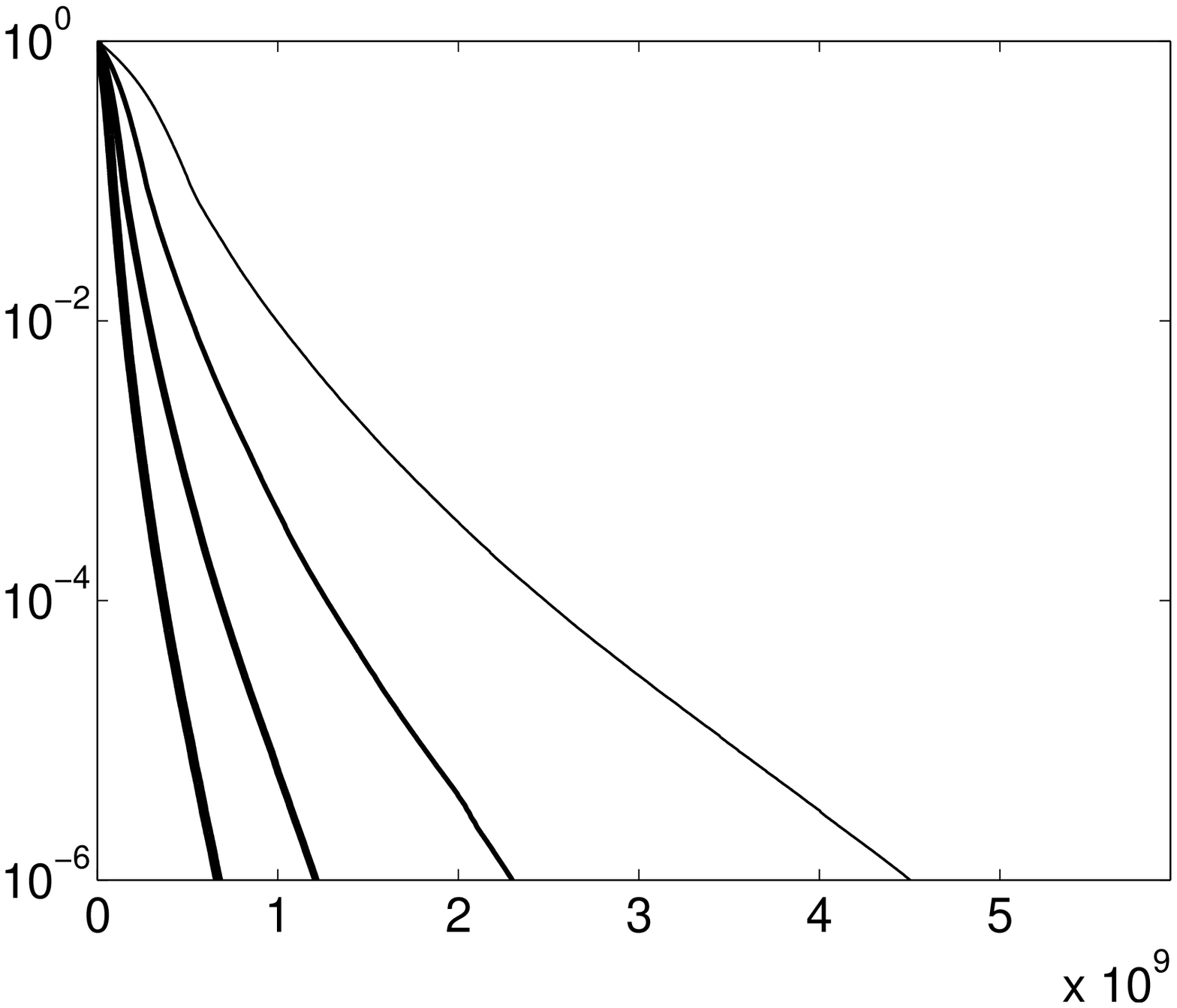}
  \end{minipage}
  \begin{minipage}[c]{.3\textwidth}
    \vspace{-2em}
    \includegraphics[scale=0.2]{./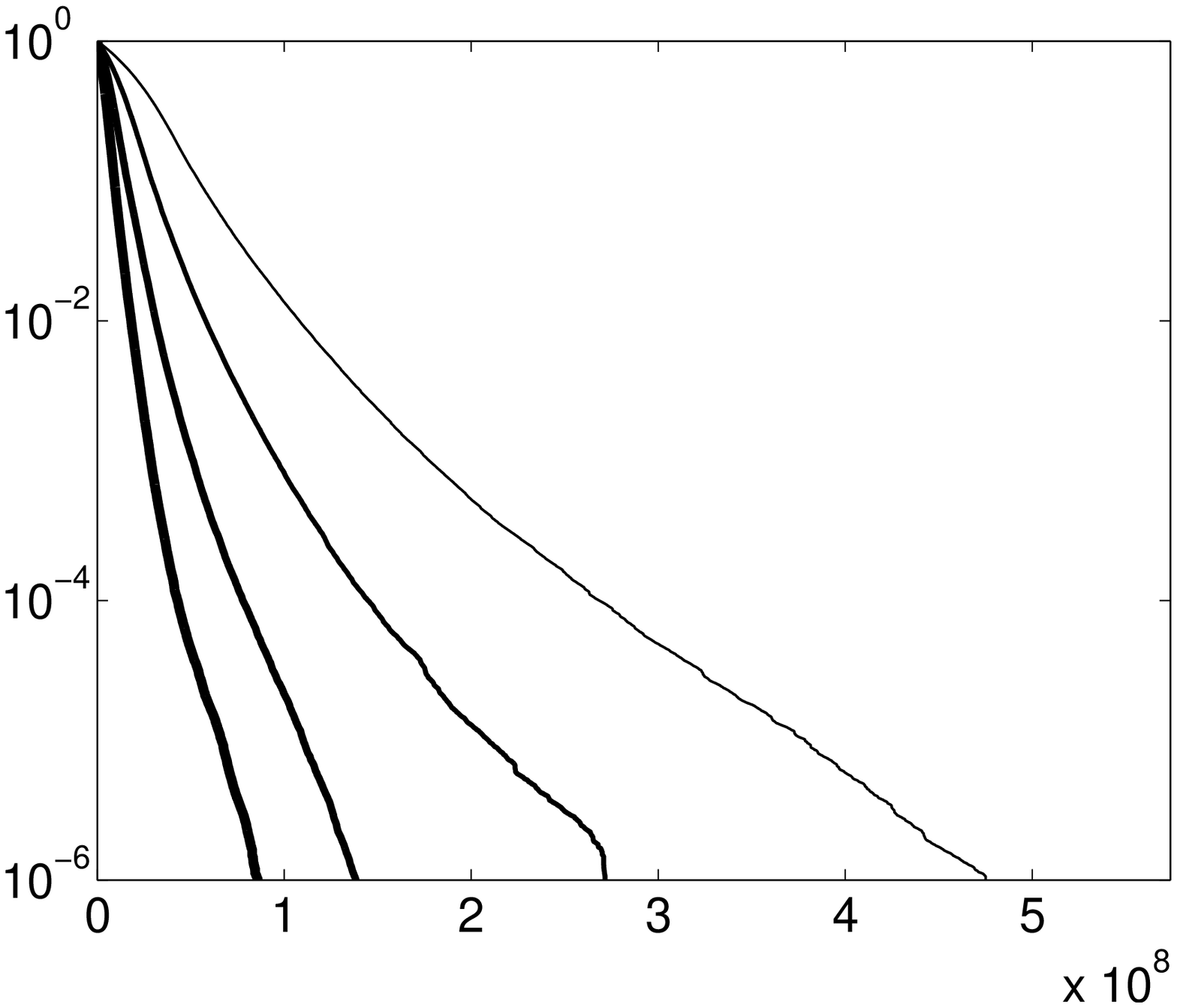}
  \end{minipage}
  \begin{minipage}[c]{.3\textwidth}
    \vspace{-2em}
    \includegraphics[scale=0.2]{./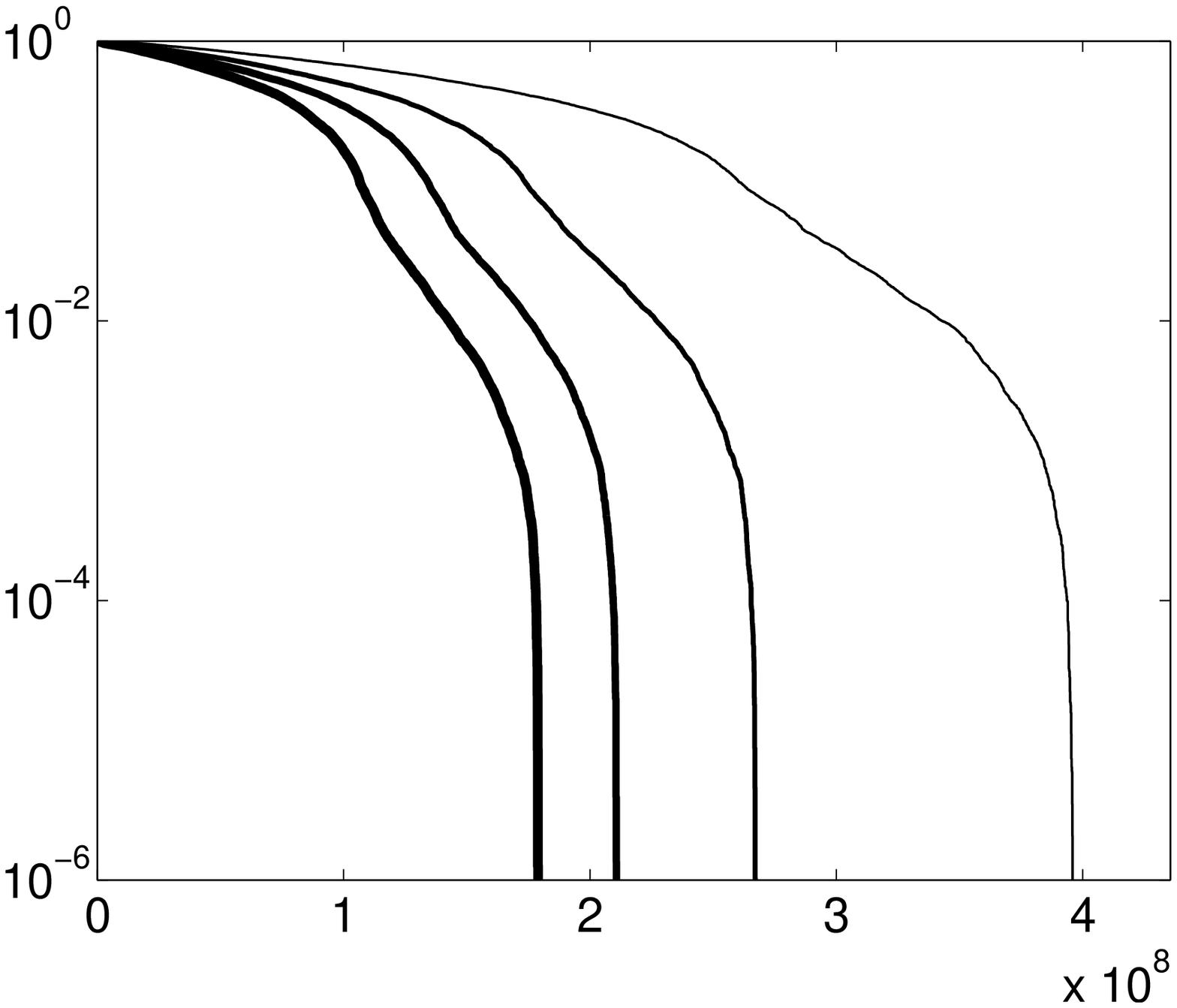}
  \end{minipage}
%    \vspace{0.5cm}
  \begin{minipage}[c]{.3\textwidth}
  \vspace{1.2cm}
  \begin{center}
   $\qquad$real-sim
  \end{center}
  \end{minipage}
  \begin{minipage}[c]{.3\textwidth}
  \vspace{1.2cm}
  \begin{center}
   rcv1
  \end{center}
  \end{minipage}
  \begin{minipage}[c]{.3\textwidth}
  \vspace{1.25cm}
  %\begin{center}
   $\qquad\quad$w8a
  %\end{center}
    \end{minipage}
  \begin{minipage}[c]{1\textwidth}
    \vspace{2em}
	    \setlength{\unitlength}{0.4cm}
\begin{center}
 	    \begin{picture}(28.5,1)(0,0)
	    \put(0,0){\framebox(28.5,1)}
	    \put(2,0.4){\linethickness{0.2mm}\line(1,0){2}}
	    \put(4.5,0.3){$q=1$}
	    \put(9,0.4){\linethickness{0.35mm}\line(1,0){2}}
	     \put(11.5,0.3){$q=2$}
	    \put(16,0.4){\linethickness{0.50mm}\line(1,0){2}}
	    \put(18.5,0.3){$q=4$}
	    \put(23,0.4){\linethickness{0.65mm}\line(1,0){2}}
	    \put(25.5,0.3){$q=8$}
	    \end{picture}
	    \end{center}
  \end{minipage}
  \caption{PARSMO-1: Relative Error versus kernel evaluations per process. }\label{ReVSK}
\end{figure}
\clearpage
Our results show that the larger $q$ is, the steeper the $RE$ decrease is. This emphasizes the positive effect of moving along multiple SMO directions at a time.

As regards PARSMO-2, we note that, except for the MVP pair which can require the computation of the kernel columns  $\Qbold_{* i_1}$ and $\Qbold_{* j_1}$,
 each SMO process computes only the analytical solution of the two-dimensional subproblem, since kernel columns are already available in the cache.
Thus, PARSMO-2 may produce a cpu time saving even  by running the algorithm in a sequential fashion. In order to show the cheapness of its tasks, in Figure \ref{ReVST} we plot $RE$ versus the CPU-time consumed.
 \begin{figure}[ht!]
 \hspace{1.6em}
  \begin{minipage}[c]{.3\textwidth}
    \includegraphics[scale=0.2]{./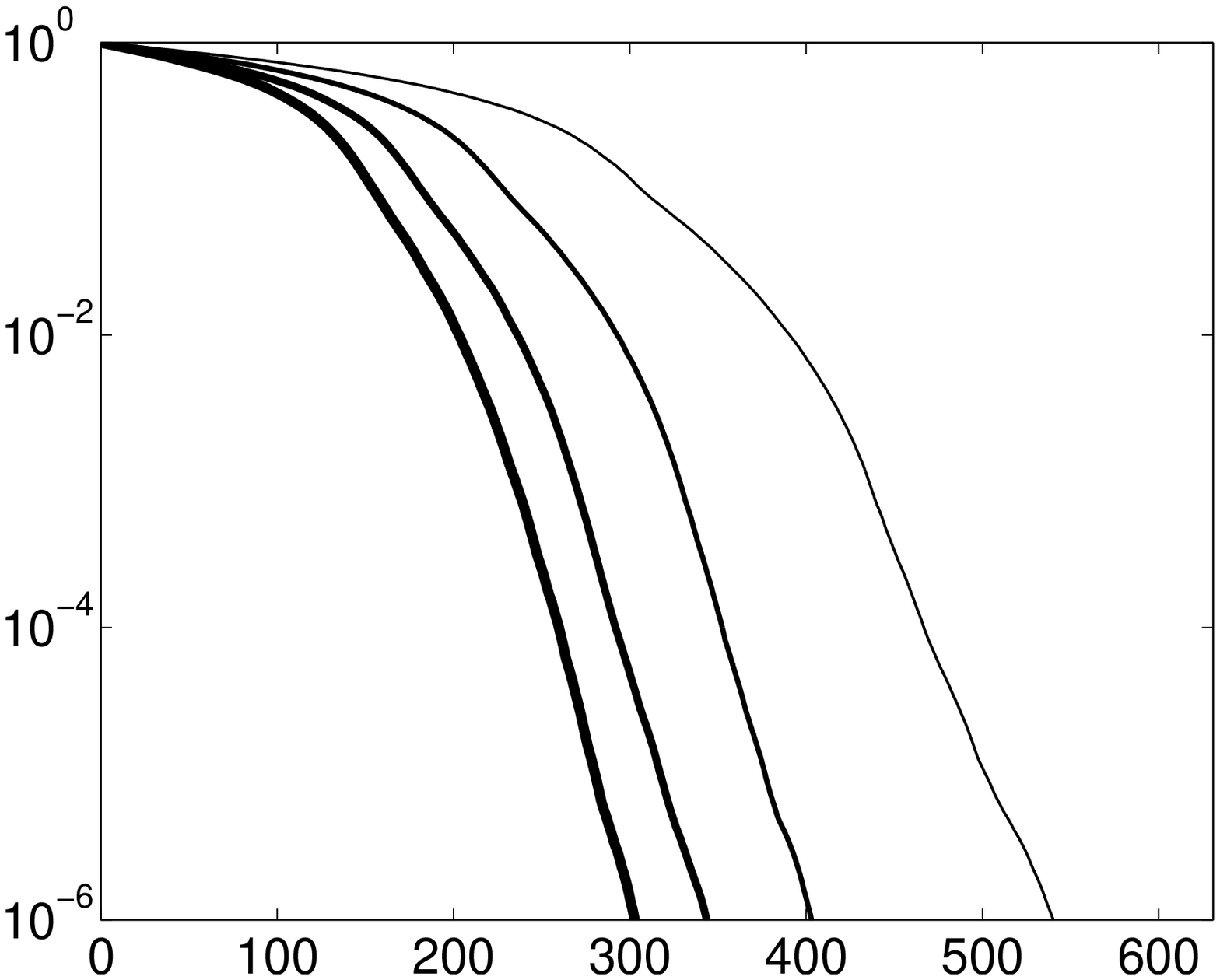}
  \end{minipage}
  \begin{minipage}[c]{.3\textwidth}
    \includegraphics[scale=0.2]{./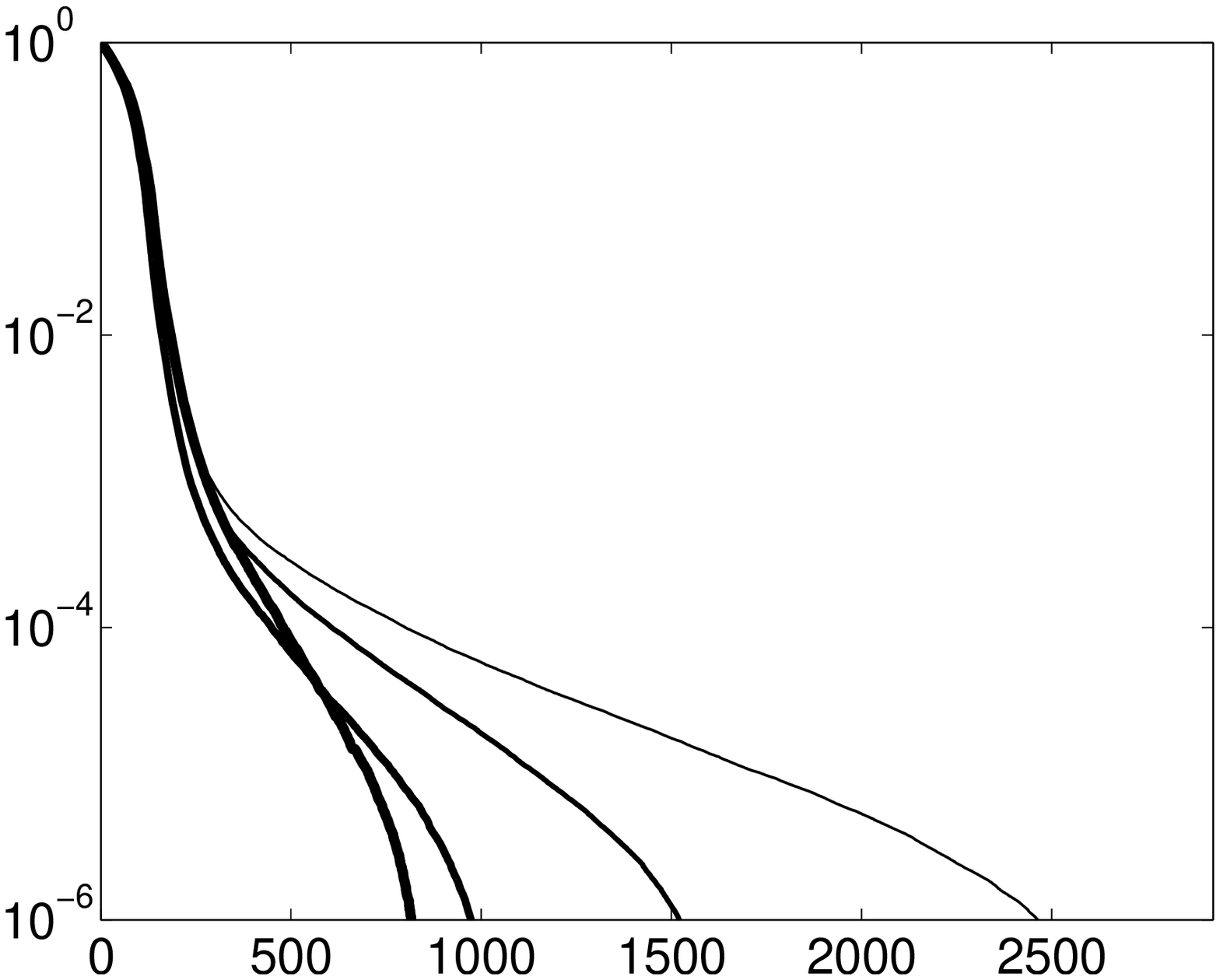}
%    \vspace{2cm}
  \end{minipage}
  \begin{minipage}[c]{.3\textwidth}
%    \vspace{-1.6em}
    \includegraphics[scale=0.2]{./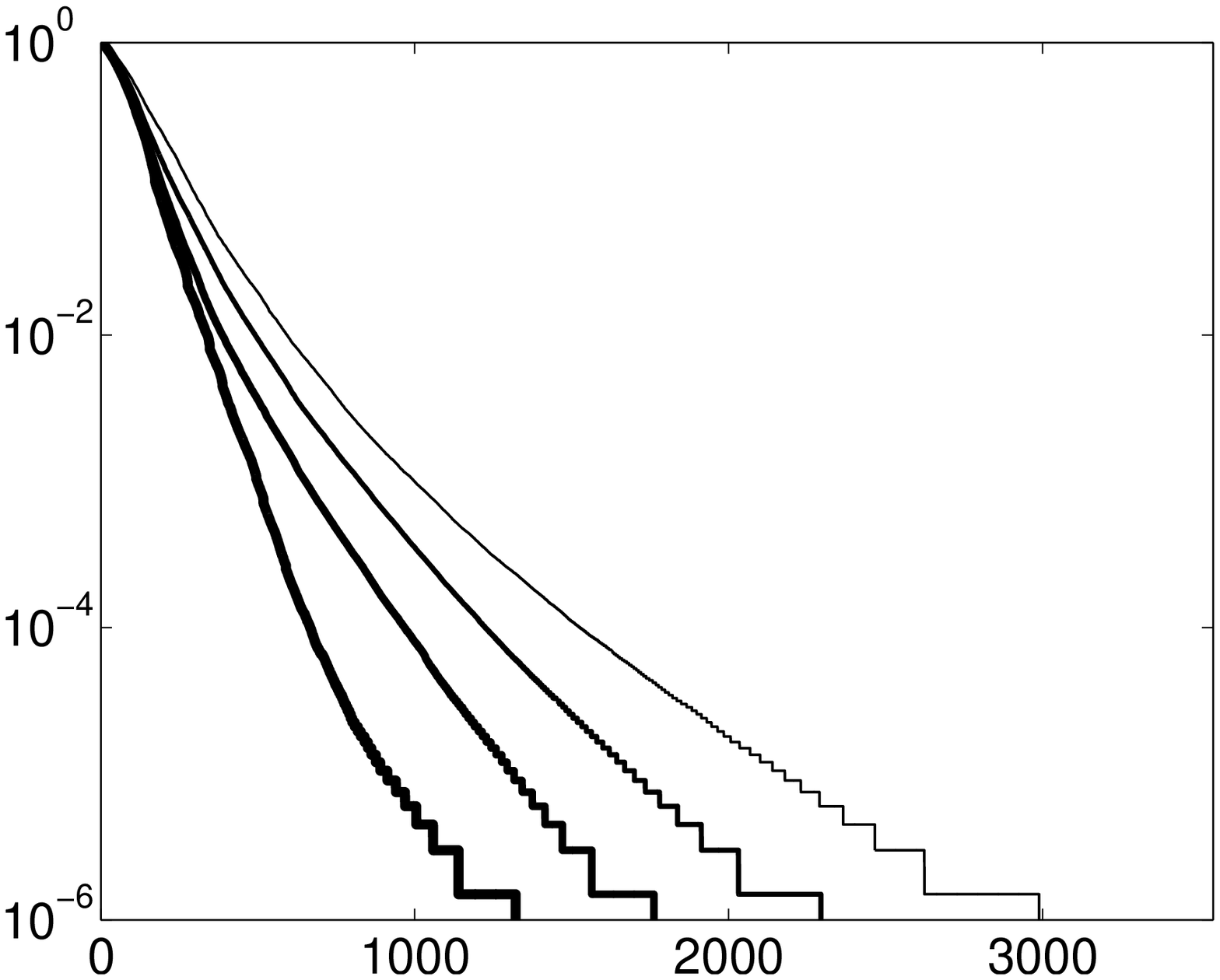}
  \end{minipage}
  \begin{minipage}[c]{.3\textwidth}
  \vspace{1.6cm}
  \begin{center}
   $\qquad$a9a
  \end{center}
  \end{minipage}
  \begin{minipage}[c]{.3\textwidth}
  \vspace{1.6cm}
  \begin{center}
   cod-rna
  \end{center}
  \end{minipage}
  \begin{minipage}[c]{.3\textwidth}
  \vspace{1.6cm}
  %\begin{center}
   $\qquad$gisette scale
  %\end{center}
    \end{minipage}
 \vspace{-0cm}
 \hspace*{1.6em}
  \begin{minipage}[c]{.3\textwidth}
    \vspace{-2em}
    \includegraphics[scale=0.2]{./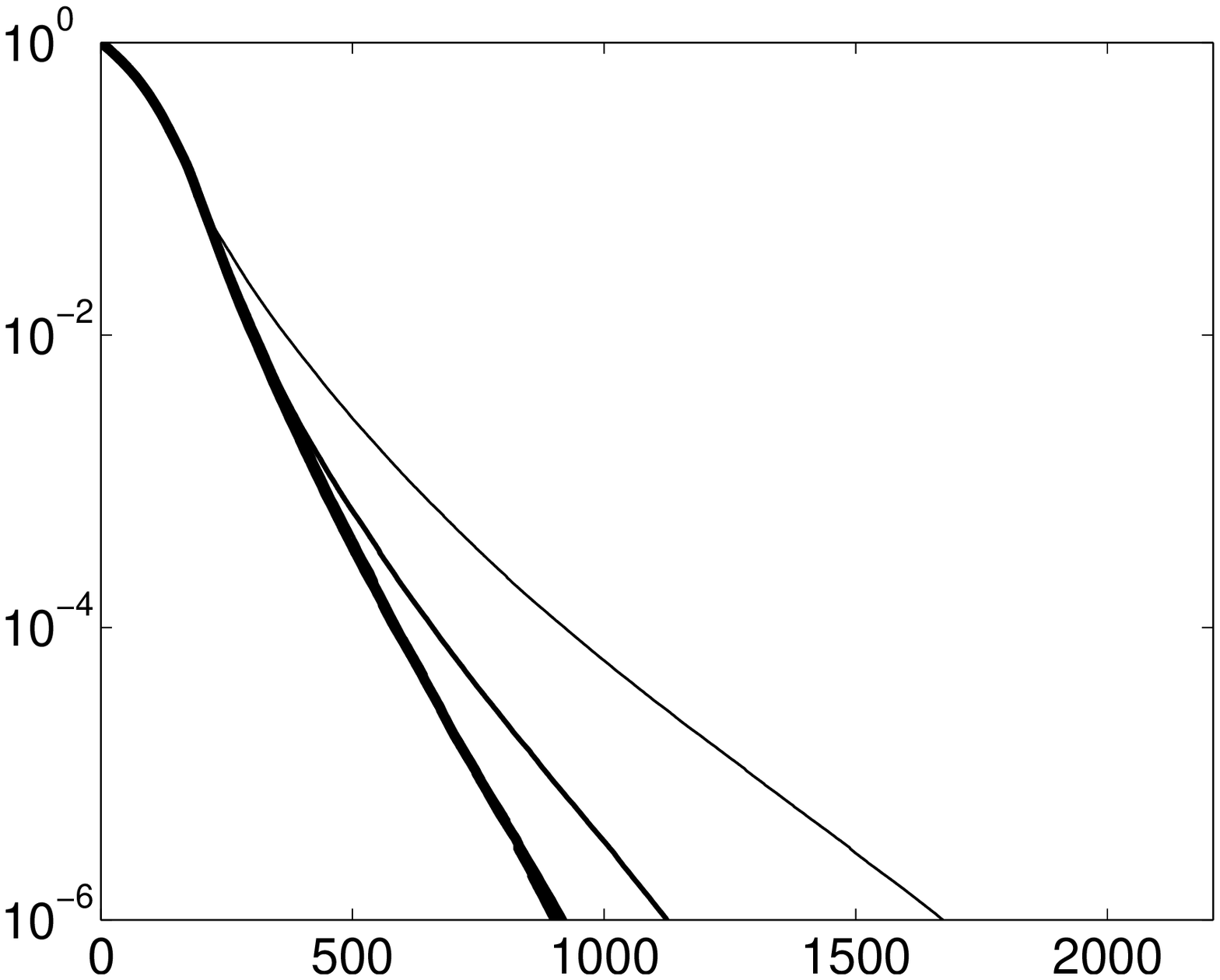}
  \end{minipage}
  \begin{minipage}[c]{.3\textwidth}
    \vspace{-2em}
    \includegraphics[scale=0.2]{./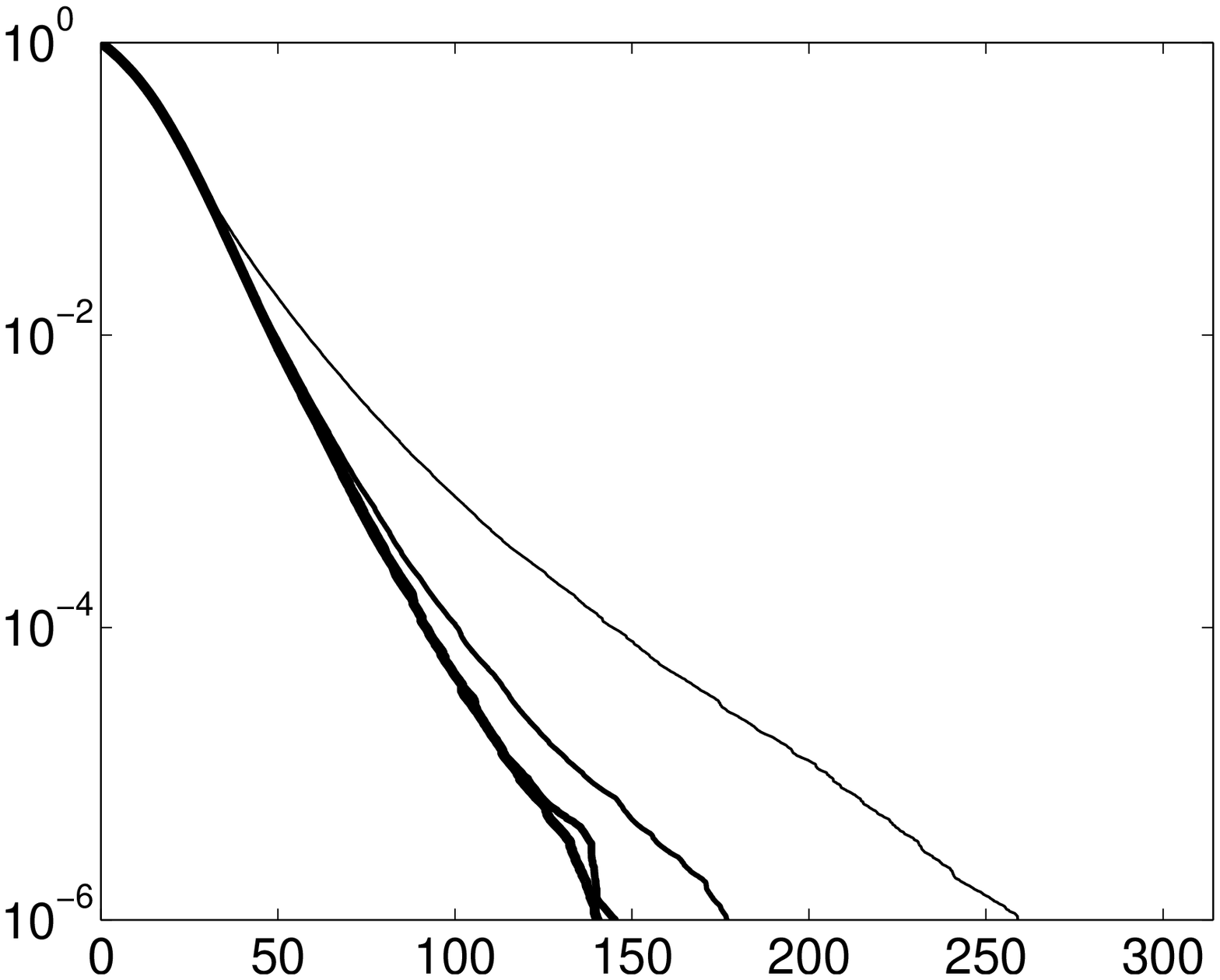}
  \end{minipage}
  \begin{minipage}[c]{.3\textwidth}
    \vspace{-2em}
    \includegraphics[scale=0.2]{./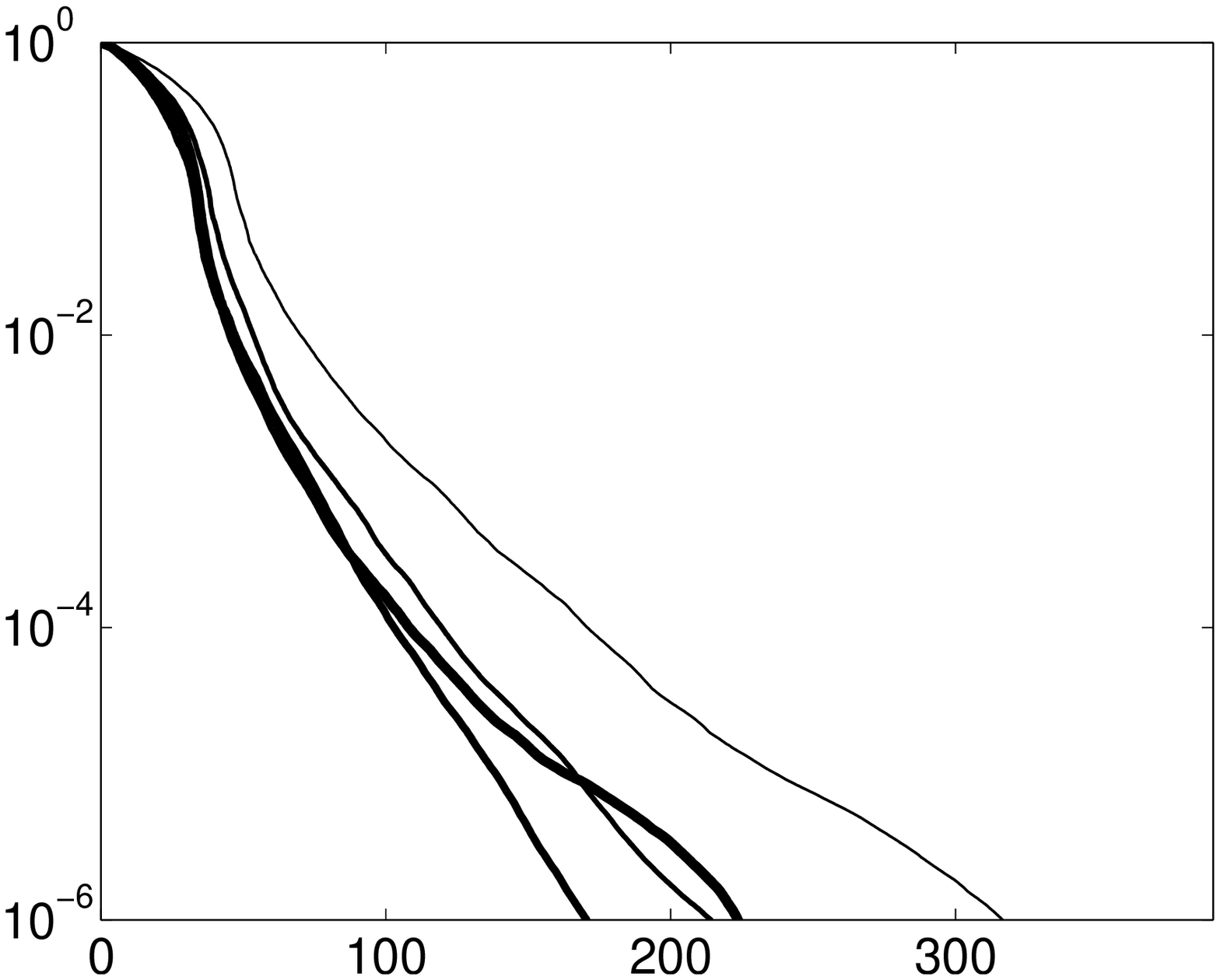}
  \end{minipage}
%    \vspace{0.5cm}
  \begin{minipage}[c]{.3\textwidth}
  \vspace{1.6cm}
  \begin{center}
   $\qquad$real-sim
  \end{center}
  \end{minipage}
  \begin{minipage}[c]{.3\textwidth}
  \vspace{1.6cm}
  \begin{center}
   rcv1
  \end{center}
  \end{minipage}
  \begin{minipage}[c]{.3\textwidth}
  \vspace{1.6cm}
  %\begin{center}
   $\qquad\quad$w8a
  %\end{center}
    \end{minipage}
  \begin{minipage}[c]{1\textwidth}
    \vspace{2em}
	    \setlength{\unitlength}{0.4cm}
\begin{center}
 	    \begin{picture}(28.5,1)(0,0)
	    \put(0,0){\framebox(28.5,1)}
	    \put(2,0.4){\linethickness{0.2mm}\line(1,0){2}}
	    \put(4.5,0.3){$q=1$}
	    \put(9,0.4){\linethickness{0.35mm}\line(1,0){2}}
	     \put(11.5,0.3){$q=2$}
	    \put(16,0.4){\linethickness{0.50mm}\line(1,0){2}}
	    \put(18.5,0.3){$q=4$}
	    \put(23,0.4){\linethickness{0.65mm}\line(1,0){2}}
	    \put(25.5,0.3){$q=8$}
	    \end{picture}
	    \end{center}
  \end{minipage}
  \caption{PARSMO-2: Relative Error versus (sequential) CPU-time. \label{ReVST}}
\end{figure}

PARSMO-2 with $q>1$ seems to be faster than a classical MVP algorithm. This is due the use of multiple search directions without suffering from an increase of time consuming kernel evaluations or from the need of iterative solutions of larger quadratic subproblems.
It is important to outline that PARSMO-2 achieves its good performances by combining a convergent parallel structure with an efficient sequential implementation, and it seems to be useful also in a single-core environment.
\section*{Acknowledgment}
The authors thank Prof. Marco Sciandrone (Dipartimento di Ingegneria dell'Informazione, Università di Firenze) for fruitful discussions and suggestions that improved significantly the paper.

\bibliographystyle{plain}
\bibliography{parallel_svm}

\end{document}